\title{Generics in invariant subsets of some highly homogeneous permutation groups}
\author{M. Drzewiecka, A. Ivanov 
\thanks{Department of Applied Mathematics, Silesian University of Technology, Gliwice, Poland} and B. Mokry 
}
\newtheorem{theorem}{Theorem}[section]
\newtheorem{proposition}[theorem]{Proposition}
\newtheorem{corollary}[theorem]{Corollary}
\newtheorem{lemma}[theorem]{Lemma}
\newtheorem{definition}[theorem]{Definition}
\newtheorem{remark}[theorem]{Remark}
\newenvironment{proof}{\addvspace{8pt plus 2pt minus 2pt}\noindent\emph{Proof. }}
  { \begin{flushright}$\blacksquare$\par\addvspace{8pt plus 2pt minus
2pt}\end{flushright}}
\begin{document} 
\topmargin = 12pt
\textheight = 630pt 
\footskip = 39pt 

\maketitle

\begin{quote}
{\bf Abstract} 
Let $G$ be a closed highly homogeneous subgroup of $S_{\infty}$ not involving circular orderings.  
We show that the closure of a conjugacy class from $G$ contains a conjugacy class which is comeagre in it. 
Furthermore, we show that the family of finite partial maps extendable to elements of this conjugacy class has the cofinal amalgamation property.  
Similar statements are proved for automorphisms of typical ultrahomogeneous partial orderings. 
\\ 
{\bf 2020 Mathematics Subject Classification}: 03C64; 03E15; 20B27.\\ 
{\bf Keywords}: Homogeneous structures; Automorphism groups; Generic automorphism. 
\end{quote}
 
\section{Introduction} 

Let $M$ be a countable ultrahomogeneous structure and let $\mathcal{C}$ be a closed subset of $\mathsf{Aut}(M)$ which is invariant under conjugacy in $\mathsf{Aut}(M)$. 
\begin{quote} 
{\em When does the set $\mathcal{C}$ contain a conjugacy class of $\mathsf{Aut}(M)$ which is comeagre in it?} 
\end{quote} 
It is clear that such $\mathcal{C}$ must have a dense conjugacy class, i.e. it must be of the following form. 
Let $\rho \in \mathsf{Aut}(M)$ and let us denote by $\mathcal{C}_{\rho}$ the least closed subset of $\mathsf{Aut}(M)$ which contains $\rho$ and is invariant with respect to conjugacy in $\mathsf{Aut}(M)$. 
It is clear that $\mathcal{C}_{\rho} = cl (\rho^{\mathsf{Aut}(M)})$ where $cl$ denotes the operation of topological closure in the automorphism group. 
The question we started with can be formulated as follows. 
{\em When does the set $\mathcal{C}_{\rho}$ contain a conjugacy class which is comeagre in it?} 
Some general condition for this can be formulated in terms of joint embedding and weak amalgamation property (WAP) introduced in \cite{iva99} (under another name) and considered later in the very influential paper \cite{KR}. 
We will describe it in Preliminaries. 
One of versions of WAP is considered in \cite{iva99} under the name Truss' condition. 
It states that a poset of finite partial automorphisms of a structure has a cofinal subfamily with the amalgamation property. 
It was introduced in \cite{truss94} and now is typically called CAP.    
{\em Is it true that in the characterization of classes $\mathcal{C}_{\rho}$ with generic automorphisms which we mentioned above, properties WAP and CAP are equivalent?} 
No example realizing WAP $\not\Rightarrow$ CAP is known (the contrary implication always holds). 
Under the additional restriction that $\mathcal{C}_{\rho} = \mathsf{Aut}(M)$ the question seems to be folklore. 
In this paper we concentrate on two questions distinguished in this introduction in the cases of highly homogeneous structures and ultrahomogeneous partial orderings. 
In particular the statements declared in the abstract will be proved. 

Due to descriptions of highly homogeneous structures and ultrahomogeneous partial orderings (see Section 1.3), the case of $\mathsf{Aut}(\mathbb{Q},<)$ becomes central in our work. 
In Section 4 we prove the key theorem of the paper stating property CAP for any $\mathcal{C}_{\rho}$ of this group (Theorem \ref{gd+rch-cap}).  
Since we do not assume the equality $\mathcal{C}_{\rho} = \mathsf{Aut}(M)$, this case involves new combinatorial challenges. 
We thus emphasize some novelty in our methods which arises from our tricks in colored linear orders (see Section 2). 

\subsection{General preliminaries}

In this paper we study the automorphism group $\mathsf{Aut}(M)$ of a countable unltrahomogeneous structure $M$. 
It is always viewed as a topological group with respect to the pointwise convergence topology. 
In particular for any partial finite isomorphism $p:M \rightarrow M$ the set 
$\mathcal{B}_p = \{ \alpha \in \mathsf{Aut}(M) \, | \, \alpha$ extends $p\}$ is clopen. 
Let $\mathcal{P}$ be the set of all finite partial isomorphisms of $M$. 
Then the family $\{ \mathcal{B}_p \, | \, p\in \mathcal{P} \}$ is a base of this topology. 

The set $\mathcal{P}$ is ordered by the relation of extension of maps, it will be denoted by $\subseteq$. 
In this terms we can formulate the standard definitions of JEP, AP, CAP and WAP. 
In the definitions below we always assume that the subset $\mathcal{P}' \subset \mathcal{P}$ is invariant under the natural action of $\mathsf{Aut}(M)$ on $\mathcal{P}$ (induced by the action on $M\times M$). 
\begin{itemize} 
\item A subfamily $\mathcal{P}'\subseteq \mathcal{P}$ has the joint embedding property if for any two elements 
$p_1, p_2 \in \mathcal{P}'$ there is $p_3$ from $\mathcal{P}'$ 
and an automorphism $\alpha \in \mathsf{Aut}(M)$ such that $p_1 \subseteq p_3$ and $\alpha (p_2) \subseteq p_3$. 
\item A subfamily $\mathcal{P}'\subseteq \mathcal{P}$ has the amalgamation property if for any $p_0 ,p_1 , p_2 \in \mathcal{P}'$ with $p_0 \subseteq p_1$ and $p_0 \subseteq p_2$ there is $p_3 \in \mathcal{P}'$ and an automorphism $\alpha \in \mathsf{Aut}(M)$ fixing $\mathsf{Dom} (p_0 ) \cup \mathsf{Rng} (p_0)$ such that $p_1 \subseteq p_3$ and $\alpha (p_2) \subseteq p_3$. 
\item A subfamily $\mathcal{P}'\subseteq \mathcal{P}$ has the cofinal amalgamation property if for any $p_0 \in \mathcal{P}'$ there is an extension $p'_0 \in \mathcal{P}'$ such that for any $p_1 , p_2 \in \mathcal{P}'$ with $p'_0 \subseteq p_1$ and $p'_0 \subseteq p_2$ there is $p_3 \in \mathcal{P}'$ and an automorphism $\alpha \in \mathsf{Aut}(M)$ fixing $\mathsf{Dom} (p'_0 ) \cup \mathsf{Rng}(p'_0)$ such that $p_1 \subseteq p_3$ and $\alpha (p_2) \subseteq p_3$. 
\item A subfamily $\mathcal{P}'\subseteq \mathcal{P}$ has the weak amalgamation property if for any $p_0 \in \mathcal{P}'$ there is an extension $p'_0 \in \mathcal{P}'$ such that for any $p_1 , p_2 \in \mathcal{P}'$ with $p'_0 \subseteq p_1$ and $p'_0 \subseteq p_2$ there is $p_3 \in \mathcal{P}'$ and an automorphism $\alpha \in \mathsf{Aut}(M)$ fixing $\mathsf{Dom} (p_0 ) \cup \mathsf{Rng} (p_0)$ such that $p_1 \subseteq p_3$ and $\alpha (p_2) \subseteq p_3$. 
\end{itemize} 
Finally we say that a subfamily $\mathcal{P}'\subseteq \mathcal{P}$ has the hereditary property (i.e. HP) if for any two elements $p_1 \in \mathcal{P}$ and $p_2 \in \mathcal{P}'$ 
the condition $p_1 \subseteq p_2$ implies $p_1 \in \mathcal{P}'$. 

We adapt the approach of \cite{iva99} to the case of groups of automophisms of {\em uniformly locally finite ultrahomogeneous structures of a finite language}. 
In particular from now on we take these assumptions on $M$. Each $p\in \mathcal{P}$ can be viewed as the expansion $(M, \mathsf{Graph}(p), \mathsf{Graph}(p^{-1}))$ and each automorphism $\alpha \in \mathsf{Aut}(M)$ can be viewed as the expansion $(M, \alpha, \alpha^{-1})$.  
Let $T_{aut}$ be the theory of the language of $M$ expanded by two functional symbols $\{ \alpha ,\beta \}$  and axiomatized by $Th (M)$, the sentence 
$\forall x(\alpha \beta (x) = \beta \alpha (x) = x)$ and all universal sentences asserting that $\alpha$ preserves the relations/functions of the language of $M$. 
Given $\rho \in \mathsf{Aut}(M)$ let $T_{\rho}$ be the extension of $T_{aut}$ by all sentences forbidding those $p\in \mathcal{P}$ for which the structure   
\[ 
(\langle \mathsf{Dom} (p) \cup \mathsf{Rng}(p)\rangle , \mathsf{Graph}(p), \mathsf{Graph}(p^{-1}))
\]  
(considered together with relations and functions of $M$) cannot be embedded as a substructure of $(M, \mathsf{Graph}(\rho), \mathsf{Graph}(\rho^{-1}))$.

\begin{lemma} \label{cl}
An automorphism $\gamma \in \mathsf{Aut}(M)$ belongs to 
$\mathcal{C}_{\rho}$ if and only if  
$(M, \gamma , \gamma^{-1}) \models T_{\rho}$.   
\end{lemma}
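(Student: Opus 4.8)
The plan is to reduce both sides of the claimed equivalence to a single combinatorial condition on the finite restrictions of $\gamma$. For a finite partial isomorphism $q$ write $A_q$ for the finite structure $(\langle \mathsf{Dom}(q)\cup\mathsf{Rng}(q)\rangle,\mathsf{Graph}(q),\mathsf{Graph}(q^{-1}))$, considered together with the relations and functions of $M$; it is genuinely finite because $M$ is uniformly locally finite of finite language, which is precisely what makes the forbidding sentences defining $T_\rho$ first-order. The condition I aim for is $(\ast)$: for every finite $q\subseteq\gamma$, the structure $A_q$ embeds into $(M,\mathsf{Graph}(\rho),\mathsf{Graph}(\rho^{-1}))$. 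I would then show separately that $\gamma\in\mathcal{C}_\rho$ is equivalent to $(\ast)$ and that $(M,\gamma,\gamma^{-1})\models T_\rho$ is equivalent to $(\ast)$, after which the lemma is immediate.

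For the topological side, I would first recall that the clopen sets $\mathcal{B}_q$ with $q\subseteq\gamma$ finite form a neighbourhood basis of $\gamma$, so that $\gamma\in\mathcal{C}_\rho=cl(\rho^{\mathsf{Aut}(M)})$ holds iff every such $\mathcal{B}_q$ contains a conjugate of $\rho$. The heart of this direction is the equivalence: $\mathcal{B}_q$ contains a conjugate of $\rho$ iff $A_q$ embeds into $(M,\mathsf{Graph}(\rho),\mathsf{Graph}(\rho^{-1}))$. If $\delta^{-1}\rho\delta$ extends $q$, then rewriting $\delta^{-1}\rho\delta(a)=q(a)$ as $\rho(\delta(a))=\delta(q(a))$ shows that $\delta$ restricted to $\langle\mathsf{Dom}(q)\cup\mathsf{Rng}(q)\rangle$ is the required embedding. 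Conversely, given an embedding $\iota$ of $A_q$ into $(M,\mathsf{Graph}(\rho),\mathsf{Graph}(\rho^{-1}))$, I would regard $\iota$ as a finite partial isomorphism of $M$ and use ultrahomogeneity of $M$ to extend it to some $\delta\in\mathsf{Aut}(M)$; running the same computation backwards yields $\delta^{-1}\rho\delta\supseteq q$. This gives $\gamma\in\mathcal{C}_\rho\Leftrightarrow(\ast)$.

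For the model-theoretic side, since $(M,\gamma,\gamma^{-1})$ automatically satisfies $T_{aut}$, we have $(M,\gamma,\gamma^{-1})\models T_\rho$ iff it satisfies every forbidding sentence, i.e.\ iff it omits every finite configuration $A_p$ that fails to embed into $(M,\mathsf{Graph}(\rho),\mathsf{Graph}(\rho^{-1}))$. Taking contrapositives, this says exactly that every finite $A_p$ which embeds into $(M,\gamma,\gamma^{-1})$ also embeds into $(M,\mathsf{Graph}(\rho),\mathsf{Graph}(\rho^{-1}))$. The key bookkeeping point is that, up to isomorphism, the finite structures embeddable into $(M,\gamma,\gamma^{-1})$ are precisely the $A_q$ with $q\subseteq\gamma$ finite: the inclusion of a finite restriction supplies one such embedding, while any embedding $\iota$ of some $A_p$ pushes $\mathsf{Graph}(p)$ into $\mathsf{Graph}(\gamma)$ and hence realizes $A_p$ as $A_q$ for $q=\gamma\restriction\iota(\mathsf{Dom}(p))$. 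Therefore $(M,\gamma,\gamma^{-1})\models T_\rho\Leftrightarrow(\ast)$, and combining the two equivalences proves the lemma.

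The only step carrying genuine mathematical content is the ultrahomogeneity lift in the topological equivalence --- extending a finite embedding of $A_q$ to a full automorphism $\delta$ and verifying that $\delta^{-1}\rho\delta$ extends $q$ --- so I expect that to be the main (though modest) obstacle. The remaining care is to keep the correspondence between embeddings of the $A_p$ and finite restrictions of $\gamma$ exact, in particular to check that an embedding $\iota$ of $A_p$ has image the substructure of $M$ generated by $\iota(\mathsf{Dom}(p)\cup\mathsf{Rng}(p))$, so that no spurious extra elements interfere.
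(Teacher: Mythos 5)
Your proof is correct and follows essentially the same route as the paper: both reduce membership in $\mathcal{C}_{\rho}$ and satisfaction of $T_{\rho}$ to the condition that every finite restriction of $\gamma$ embeds (with its graph) into $(M,\mathsf{Graph}(\rho),\mathsf{Graph}(\rho^{-1}))$, with the ultrahomogeneity lift of such an embedding to a full automorphism as the key step. The only difference is cosmetic --- you phrase closure via the neighbourhood basis $\mathcal{B}_q$, $q\subseteq\gamma$ finite, while the paper phrases it via a metric and limits of conjugates $\sigma_i\rho\sigma_i^{-1}$ --- and your bookkeeping on the model-theoretic side is somewhat more explicit than the paper's sketch.
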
 

This lemma is straightforward for an experienced reader.  
We now describe some hints. 
They will be helpful in the main body of the paper. 
Take an enumeration of $M^{<\omega}$, say 
$M^{<\omega}= \{ \bar{a}_0 ,\bar{a}_1 ,\ldots \}$. 
Define a metric $d$ metrizing $\mathsf{Aut}(M)$ as follows: 
\[ 
d(\gamma_1 , \gamma_2 ) = \sum \{ 2^{-n} \, : \, \mbox{ there is } a\in \bar{a}_n \mbox{ with } \gamma^{\pm 1}_1 (a) \not= \gamma^{\pm 1}_2 (a) \} .
\]
Note that 
\[
\gamma \in \mathcal{C}_{\rho} \, \Leftrightarrow \, 
\mbox{ there is a sequence } \sigma_i \in \mathsf{Aut}(M) 
\mbox{ such that } \gamma = \lim_{i \rightarrow \infty}  \sigma_i \rho \sigma^{-1}_i , 
\] 
where the limit is taken with respect to $d$.   
Furthermore, the equality 
$\gamma = \lim_{i \rightarrow \infty}  \sigma_i \rho \sigma^{-1}_i$ obviously implies that $(M, \gamma, \gamma^{-1}) \models T_{\rho}$.
To see the converse of the latter fact note that having $(M, \gamma, \gamma^{-1}) \models T_{\rho}$ we can map any restriction of $\gamma$ on 
$\langle \bigcup \{ \bar{a}_i : i\le n \}\rangle$ (considered as a finite structure) onto a substructure of $(M, \mathsf{Graph}(\rho), \mathsf{Graph}(\rho^{-1}))$.  
Using ultrahomogenity we extend this map to an automorphism of $M$, say $\sigma_i$.  
Then  
$\gamma = \lim_{i \rightarrow \infty}  \sigma_i \rho \sigma^{-1}_i$. 

It is easy to see that $T_{\rho}$ is axiomatizable by sentences which are universal with respect to the symbols $\alpha$ and $\beta$. 
In particular it satisfies the assumptions on $T$ appearing in the term ${\bf B}_T$ used in \cite{iva99} for a family of diagrams. 
Furthermore, all notions defined in \cite{iva99} for ${\bf B}_T$ (joint embedding, amalgamation, weak amalgamation,...) can be carried out in the case of $T_{\rho}$. 
In fact they become joint embedding, amalgamation, weak amalgamation,... , defined as above for finite partial isomorphisms of the following subfamily of $\mathcal{P}$. 
Let 
\begin{quote} 
$\mathcal{P}_{\rho} = \{ p \in \mathcal{P} : p$ extends to an automorphism from $\mathcal{C}_{\rho} \}$. 
\end{quote}
By Lemma \ref{cl} one can verify that the category ${\bf B}_{T_{\rho}}$ from \cite{iva99} is equivalent to the category $\mathcal{P}_{\rho}$. 

An automorphism $\gamma$ is called {\em generic} \cite{truss94} if it has a comeagre conjugacy class in $\mathsf{Aut}(M)$. 
We generalize this definition as follows. 

\begin{definition} 
Let $\mathcal{C}$ be a closed subset of $\mathsf{Aut}(M)$ which is invariant under conjugacy. 
We say that an automorphism $\gamma \in \mathsf{Aut}(M)$ is  generic in $\mathcal{C}$ if its conjugacy class $\gamma^{\mathsf{Aut}(M)}$ is comeagre in  $\mathcal{C}$. 
\end{definition} 
Reconstructing terms of \cite{iva99} it is easy to see that $\gamma$ is generic in $\mathcal{C}_{\rho}$ if and only if the structure $(M, \gamma , \gamma^{-1})$ is generic with respect to ${\bf B}_{T_{\rho}}$ in the sense of \cite{iva99}. 
Having this we can apply Theorem 1.2 from \cite{iva99} as follows: 
\begin{quote} 
{\em The set $\mathcal{C}_{\rho}$ has a generic automorphism if and only if the family $\mathcal{P}_{\rho}$ has JEP and WAP. }
\end{quote} 
It is clear that JEP is satisfied automatically: any two $p_1$ and $p_2 \in \mathcal{P}_{\rho}$ can be embedded into $\rho$. 

It is an open question if in this formulation WAP can be replaced by CAP. 
It is worth noting here that in the general context of 
Fr\"{a}iss\'{e} theory an example showing WAP $\not\Leftrightarrow$ CAP    
was obtained very recently, see \cite{KKKP}. 
This explains the major motivation of our paper. 
The main results below demonstrate that in typical situations of closed subgroups of $S_{\infty}$ one should expect that WAP and CAP are equivalent. 
Below we consider closed highly homogeneous groups and the automorphism groups of ultrahomogeneous partially ordered sets.  
\begin{remark} 
{\em 
On the other hand it is well known that JEP does not imply WAP. 
For example Corollary 3.9 of \cite{kwma} states this for the class of partial isomorphisms of finite ordered graphs and 
the class of partial isoomorphisms of finite ordered tournaments.  
We also mention here that it is well known that for any invariant subfamily $\mathcal{P}' \subseteq \mathcal{P}$ defining a closed $\mathcal{C}'\subseteq \mathsf{Aut}(M)$, the property JEP is equivalent to the property that $\mathcal{C}'$ has a dense conjugacy class, \cite{KR}. 
}
\end{remark}

\subsection{Conjugacy classes of $S_{\infty}$} 

Let us consider the topic which we have introduced above in the case of $S_{\infty}$. 

Each permutation $\rho$ of $\omega$ naturally defines a cycle function 
\[ 
f_{\rho} : (\omega \setminus \{ 0\}) \cup \{ \infty \} \rightarrow \omega \cup \{ \infty \} 
\] 
which assigns the number of cycles of $\rho$ of length $n$.  
It is well-known (and easily seen) that two permutations are conjugate if and only if their cycle functions are the same.  
In order to describe $\mathcal{C}_{\rho}$ in $S_{\infty}$ we consider two cases. 
\begin{itemize} 
\item There is a number $n$ such that each cycle of $\rho$ is bounded by $n$. 
In this case $\gamma \in \mathcal{C}_{\rho}$ if and only if 
$f_{\gamma}(\iota ) \le f_{\rho} (\iota )$ for each $\iota \in (\omega \setminus \{ 0\}) \cup \{ \infty \}$. 
\item The length of cycles of $\rho$ is not bounded.
In this case $\gamma \in \mathcal{C}_{\rho}$ if and only if 
$f_{\gamma} (n) \le f_{\rho} (n)$ for each $n\in \omega\setminus \{ 0\}$. 
\end{itemize} 
These statements are easy exercises (Lemma \ref{cl} can be applied too).  
The situation where there is a number $n$ such that each finite cycle of $\rho$ is bounded by $n$ but there are infinite cycles is included into the latter case. 

The following proposition solves the questions formulated above in the case of $S_{\infty}$. 

\begin{proposition} \label{infty} 
 For every $\rho \in S_{\infty}$ the family $\mathcal{P}_{\rho}$ has CAP and the set $\mathcal{C}_{\rho}$ has a generic permutation $\gamma$. 
Furthermore, 
\begin{itemize} 
\item $f_{\rho} (n) = f_{\gamma} (n)$ for all $n\in \omega\setminus \{ 0\}$;  
\item if the length of finite cycles of $\rho$ is not bounded then $\gamma$ does not have infinite cycles;  
\item if the length of finite cycles of $\rho$ is bounded but $\rho$ has an infinite cycle then $\gamma$ has a single infinite cycle, and if $\rho$ does not have infinite cycles then neither do $\gamma$. 
\end{itemize}  
\end{proposition}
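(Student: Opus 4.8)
The plan is to work entirely with the combinatorics of finite partial injections of $\omega$. Each $p\in\mathcal{P}$ decomposes uniquely into complete cycles together with paths whose two endpoints are loose. Reading off the two descriptions of $\mathcal{C}_\rho$ given above (equivalently, via Lemma~\ref{cl}), one checks that $p\in\mathcal{P}_\rho$ exactly when the complete finite cycles of $p$ can be accommodated length-by-length under the bounds $f_\rho(m)$, and when each path of $p$ can be completed to a cycle allowed by $\mathcal{C}_\rho$: either to a finite cycle of some length with free budget, or---when $\rho$ has unbounded cycles---to an infinite cycle. Every step below manipulates this local data.

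For CAP, note first that plain AP fails: if $p_0$ has a loose path, one extension may close it into a finite cycle while another prolongs it, and then no $\alpha$ fixing $\mathsf{Dom}(p_0)\cup\mathsf{Rng}(p_0)$ can reconcile the two images of the loose end. The cofinal extension $p_0'$ must therefore \emph{commit} the loose ends of $p_0$ in advance and leave at most one loose pair. When $\rho$ has no infinite cycle, or when the lengths of its finite cycles are unbounded, I take $p_0'$ to be $p_0$ with every path closed into a finite cycle of a length with free budget (possible by mimicking a completion $\gamma\supseteq p_0$, and in the unbounded case by choosing large fresh lengths), so that $p_0'$ has no loose ends at all. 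When the finite cycles of $\rho$ are bounded by $n$ but $\rho$ has an infinite cycle, a long path cannot be closed, so I instead concatenate all paths of $p_0$ into a single path and prolong it to length $>n$; this single path is committed to an infinite cycle and is the only loose pair of $p_0'$. In all cases $p_0\subseteq p_0'\in\mathcal{P}_\rho$.

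Given $p_1,p_2\supseteq p_0'$ in $\mathcal{P}_\rho$, I first pass to finite approximations $\hat p_i\subseteq\gamma_i$ (with $\gamma_i\in\mathcal{C}_\rho$ extending $p_i$) that close every path lying on a finite $\gamma_i$-cycle; afterwards all loose ends of $\hat p_i$ are infinite-committed---those attached to $p_0'$ merely prolong its unique loose pair, and any others lie on fresh paths. I then choose $\alpha$ fixing $\mathsf{Dom}(p_0')\cup\mathsf{Rng}(p_0')$ that, for each length $m$, maps as many extra (i.e.\ not coming from $p_0'$) $m$-cycles of $\hat p_2$ as possible onto extra $m$-cycles of $\hat p_1$ (the rest to fresh cycles), matches $\hat p_2$'s prolongation of the unique loose pair point-by-point onto $\hat p_1$'s, and sends every other new point of $\hat p_2$ (in particular all its fresh paths) to fresh points disjoint from $\hat p_1$. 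Then $q=\hat p_1\cup\alpha(\hat p_2)$ is a well-defined partial injection whose number of $m$-cycles is the count in $p_0'$ plus the \emph{maximum} (not the sum) of the two extra counts, hence at most $f_\rho(m)$; its loose ends remain infinite-committed. So $q\in\mathcal{P}_\rho$, and any $\gamma\in\mathcal{C}_\rho$ extending $q$ amalgamates $p_1$ and $p_2$. The heart of the argument, and the point I expect to be the main obstacle, is this bookkeeping: one must \emph{align} equal-length cycles rather than take disjoint unions (to respect the finite budgets), and one must have reduced to a single committed loose pair, since otherwise two extensions could disagree about whether and how two fixed loose ends lie on a common cycle, an incompatibility no admissible $\alpha$ can repair.

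Since the amalgamating $\alpha$ fixes $\mathsf{Dom}(p_0')\cup\mathsf{Rng}(p_0')\supseteq\mathsf{Dom}(p_0)\cup\mathsf{Rng}(p_0)$, CAP yields WAP; JEP is automatic, so the criterion quoted above produces a generic $\gamma$ in $\mathcal{C}_\rho$. Its cycle type I read off by Baire category inside $\mathcal{C}_\rho$, each stated property being comeagre and therefore holding on the (conjugacy-invariant, hence unique) comeagre class. For each finite $n$ the set $\{f_\gamma(n)\ge k\}$ is open and dense up to $k=f_\rho(n)$ (adjoin a fresh $n$-cycle while budget remains), giving $f_\gamma(n)=f_\rho(n)$. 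If the finite cycles of $\rho$ are unbounded, then for every $a$ one may close the path through $a$ into a long finite cycle of free budget, so ``$a$ lies on a finite cycle'' is dense open and $\gamma$ has no infinite cycle. If they are bounded by $n$ but $\rho$ has an infinite cycle, then ``no infinite cycle'' is closed with dense complement (adjoin a fresh path of length $>n$), so there is at least one infinite cycle; and for any $a,b$ the set where they share a cycle or one of them lies on a finite cycle is dense open (splice the two infinite-committed paths together), so all points off the finite cycles lie on a single cycle---exactly one infinite cycle. Finally, if $\rho$ has no infinite cycle then $\mathcal{C}_\rho$ forbids them, so neither does $\gamma$. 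These give the three displayed clauses.
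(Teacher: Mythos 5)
Your proposal is correct, and its skeleton is the same as the paper's: split on the cycle structure of $\rho$; when every loose path can be closed, use finite permutations (complete cycles only) as amalgamation bases; when it cannot (finite cycles bounded by $n$ but an infinite cycle exists), reduce to a single loose path; amalgamate by aligning equal-length cycles and counting each length by the \emph{maximum} of the two extra counts; then invoke Theorem 1.2 of \cite{iva99} together with Baire-category arguments for the three cycle-type clauses. Where you differ is in execution, and your extra care actually matters. The paper's cofinal family $\mathcal{P}^{c\infty}_{\rho}$ in the bounded-plus-infinite case consists of a finite permutation together with a single path of \emph{arbitrary} length, with AP left as an ``easy exercise''; as literally defined this family fails AP: if the path is short enough to be closed within the budget $f_{\rho}$, one extension may close it into an $m$-cycle while another prolongs it by more than $m$ new points, and then any $\alpha$ fixing the base is forced to wrap the prolongation around the closed cycle, contradicting injectivity. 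Your requirement that the single path be prolonged to length $>n$, so that it is \emph{committed} to an infinite cycle, is precisely the repair this needs, and your preliminary passage to $\hat{p}_i$ (closing all paths that sit on finite cycles of a completion) is what makes the subsequent alignment well defined. Likewise, your case split (``no infinite cycle or unbounded finite cycles'' versus ``bounded finite cycles and an infinite cycle'') covers the mixed case of unbounded finite cycles together with infinite cycles, which falls between the paper's two cases; the point, which you make, is that unboundedness lets every path be closed into a fresh large cycle regardless of whether $\rho$ has infinite cycles. One small imprecision on your side: the closing remark ``if $\rho$ has no infinite cycle then $\mathcal{C}_{\rho}$ forbids them'' is true only when the finite cycles of $\rho$ are also bounded; in the unbounded subcase $\mathcal{C}_{\rho}$ does contain permutations with infinite cycles, but that subcase is already settled by your second bullet, so the conclusion stands.
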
 

\begin{proof} 
If $\rho$ does not have infinite cycles, then a cofinal subfamily of  $\mathcal{P}_{\rho}$ witnessing CAP can be defined as follows. 
Let $\mathcal{P}^{c}_{\rho}$ consist of all permutations of finite subsets of $\omega$ which belong to $\mathcal{P}_{\rho}$. 
Indeed, since any $p\in \mathcal{P}_{\rho}$ extends to a conjugate of $\rho$, in order to obtain an extension of this $p$ belonging to $\mathcal{P}^c_{\rho}$ one can take a restriction of that conjugate to an appropriate finite subset of $\omega$. 
To verify AP for a triple $p_0 \subseteq p_1$ and $p_0 \subseteq p_2$ take the corresponding $p_3$ as the collection of cycles appearing in $p_1$ and $p_2$ taking each cycle as many times as the maximal number of its occurrence in $p_1$ or $p_2$.  

In the case when the length of finite cycles of $\rho$ is bounded but $\rho$ has infinite cycles let $\mathcal{P}^{c\infty}_{\rho}$ consist of all partial permutations $p\in \mathcal{P}_{\rho}$ which can be presented as disjoint union $p_c \dot{\cup} p'$ where $p_c$ is a finite permutation of a  finite subset of $\omega$ which belongs to $\mathcal{P}_{\rho}$ and $p'$ is a partial map of the form 
$\{ p'(c_i ) = c_{i+1} \, | \, i\le \ell \}$ for some $\ell$ and some set $\{ c_0 , \ldots , c_{\ell} \} \subset \omega$.   
The corresponding verification is an easy exercise. 
Now the main statement of the proposition follows from Theorem 1.2 of \cite{iva99} (it was already mentioned in the previous section). 

Let $\gamma$ be a generic permutation of $\mathcal{C}_{\rho}$. 
The first item of the second sentence of the formulation is obvious. 
To see the second item note that for any $c\in \omega$
the set of all permutations $\delta \in \mathcal{C}_{\rho}$ such that $c$ is contained in a finite cycle of $\delta$, is dense and open in  $\mathcal{C}_{\rho}$. 
In particular the set of all $\delta \in \mathcal{C}_{\rho}$
without infinite cycles is comeagre in $\mathcal{C}_{\rho}$. This set must contain $\gamma$.  

To see the last sentence we apply similar arguments. 
For example assuming existence of an infinite cycle of $\rho$ 
note that for any $a_1$ and $a_2 \in \omega$ the set of all permutations $\delta \in \mathcal{C}_{\rho}$ such that $a_1$ and $a_2$ are contained in an infinite cycle of $\delta$ or one of them is in a finite cycle, is dense and open in $\mathcal{C}_{\rho}$. 
From this one can deduce that the set of all permutations from  $\mathcal{C}_{\rho}$ with a single infinite cycle is comeagre in  $\mathcal{C}_{\rho}$. 
\end{proof} 

\subsection{Homogeneous partially ordered sets and highly homogeneous groups} 

The group $S_{\infty}$ appears as the extremal case in the two projects which we present in this section. 
In \cite{schmerl} Schmerl describes countable ultrahomogeneous partially ordered sets as follows. 
Let $1 \le n \le \omega$ and let $[n]$ be the set of natural numbers $\le n$. 
This set is viewed as an antichain. 
It is an ultrahomogeneous partially ordered set. 

Let $B_n = [n]\times \mathbb{Q}$. 
It is an ultrahomogeneous partially ordered set with respect to the ordering 
\[ 
(a,q) < (b,q') \, \Leftrightarrow \, a = b \wedge q <q'. 
\]
Let $C_n = B_n$ but the ordering is defined by 
\[ 
(a,q) < (b,q') \, \Leftrightarrow \,  q <q'. 
\]
The main theorem of \cite{schmerl} states that any countable  ultrahomogeneous partially ordered set is isomorphic to $[n]$, $B_n$, $C_n$, $1 \le n \le \omega$, or to the countable universal ultrahomogeneous partially ordered set $D$.  
\begin{quote} 
Let $M$ be a countable ultrahomogeneous partially ordered set and let $\rho \in \mathsf{Aut}(M)$. 
Does $\mathcal{C}_{\rho}$ have a generic element? 
Does the corresponding $\mathcal{P}_{\rho}$ satisfy CAP? 
\end{quote}
In the previous section we have proved that the answers are positive when $M = [\omega ]$. 

Another project is a one where $S_{\infty}$ appears as a highly homogeneous group.    
A permutation group $G$ on $\omega$ is called {\em highly homogeneous} if for any pair of finite subsets $A$ and $B \subset \omega$ of the same size there is $g\in G$  
which takes $A$ onto $B$.  
A countable structure $M$ is called highly homogeneous 
if $Aut(M)$ is highly homogeneous (under an identification with $\omega$). 
The ordering of the rationals $(\mathbb{Q},<)$ 
is an example of such a structure. 
We now define the structures $(\mathbb{Q},B)$, $(\mathbb{Q},Cr )$ and $(\mathbb{Q},S)$ which also are highly homogeneous. 
The linear betweenness relation $B(x;y,z)$ associated with $(\mathbb{Q},<)$ is defined by 
\[ 
B(x;y,z) \Leftrightarrow (y<x<z) \vee (z < x <y). 
\] 
A circular order is a twisted around total order with the natural ternary relation induced by $<$.  
The circular order on the rationals $\mathbb{Q}$ is defined by
\[ 
Cr(x,y,z) \Leftrightarrow (x<y <z) \vee (z < x <y) \vee (y<z < x). 
\]
The quaternary separation relation $S$ on $\mathbb{Q}$ is induced by the group of permutations of a circular order which preserve or reverse it. 
It is shown in \cite{Cameron} that these examples
together with $S_{\infty}$ are the only countable highly homogeneous structures. 
It is worth noting that they are ultrahomogeneous with respect to their natural languages. 
The question which were formulated in the case of partially ordered sets can be also addressed to highly homogeneous structures. 
The main results of this paper give positive answers in some cases.  

It is worth noting that the group $\mathsf{Aut}(\mathbb{Q},<)$ plays the major role in both topics defined in this section. 
This explains why in our paper we consider them together. 

\begin{remark} 
{\em 
These groups also appear together as the main source of examples in several other papers. 
For example see \cite{kasi}. 
}
\end{remark} 

\subsection{Conjugacy classes of the group of order-preserving permutations of $\mathbb{Q}$}

Since the group $\mathsf{Aut}(\mathbb{Q},<)$ will play the central role in our research, in this section we give necessary
information about this group. 

Let $\gamma\in \mathsf{Aut}(\mathbb{Q},<)$ and $x\in \mathbb{Q}$.
Then the set
\[ 
\{q\in \mathbb{Q} \, | \, (\exists m,n\in \mathbb{Z})(\gamma^n(x)\le q\le \gamma^m(x))\} 
\]
is called the {\it orbital} of $\gamma$ containing $x$.
An orbital is a singleton or an open interval. 

The parity function 
$\wp_{\gamma}:\mathbb{Q}\to \{+, -,0,\}$  of $\gamma$
is defined as follows:
\[
\wp_{\gamma}(x)=\left\{\begin{array}{r@{\quad}r}
- \quad\mbox{if}\quad \gamma (x)<x\\
0 \quad\mbox{if}\quad \gamma(x)=x\\
+ \quad\mbox{if}\quad \gamma(x)>x.
\end{array}\right.
\]
Since the parity function is constant on every orbital,
we can assign to every orbital this constant value and call
it {\it the  parity} of an orbital. 

Let $\varepsilon\in \{+, -,0\}$.
Then ${\cal O}_{\gamma}$ and ${\cal O}_{\gamma}^{\varepsilon}$ stand for the family of all orbitals of $\gamma$ and  the family of all  orbitals  of $\gamma$ having parity $\varepsilon$ respectively.\parskip0pt

For any nonempty intervals $I,J \subseteq \mathbb{Q}$  
we write $I\preceq J$ if and only if $I=J$ or for each $a\in I$ and each $b\in J$ we have $a<b$. 
We shall write $I\prec J$ if $I\preceq J \wedge I\not= J$.
The relation $\preceq$ is a partial ordering on the family
of all intervals.
For given  $\gamma\in \mathsf{Aut}(\mathbb{Q},<)$ the relation $\preceq _{\gamma}$, that is $\preceq $ relativized to ${\cal O}_{\gamma}$, is  a natural ordering of ${\cal O}_{\gamma}$.

A classical result of Schreier and Ulam (see \cite{holland})
says that
\begin{quote}
{\it $\gamma_1 ,\gamma_2$ are conjugate if and only if
$({\cal O}_{\gamma_{1}}, \preceq_{\gamma_{1}})$
and $({\cal O}_{\gamma_2}, \preceq_{\gamma_2})$ are isomorphic by an isomorphism preserving parity of the orbitals}.
\end{quote}

Recall that $\gamma \in \mathsf{Aut}(\mathbb{Q})$ is {\em generic} (i.e. its conjugacy class is comeagre in $\mathsf{Aut}(\mathbb{Q})$) if and only if for each $\varepsilon\in \{+, -,0\}$, the ordering ${\cal O}^{\varepsilon}_{\gamma}$ is without endpoints and dense in ${\cal O}_{\gamma}$ (see \cite{truss94}).

Let $p$ be a finite partial isomorphism of $(\mathbb{Q},<)$. 
By ultrahomogenity it extends to some automorphism $\gamma$ of the structure. 
The parity function  $\wp_{p}:\mathbb{Q}\to \{+, -,0\}$ 
for $p$ is obviously defined for any   
$a\in \mathsf{Dom}(p) \cup \mathsf{Rng} (p)$. 
It will be considered as a partial function.  
It is  just the restriction of $\wp_{\gamma}$ on 
$\mathsf{Dom}(p) \cup \mathsf{Rng} (p)$.

Two elements $a,b\in \mathbb{Q}$ are $p$-related if $a = b$ or one of the following cases holds: 
\begin{itemize} 
\item $a\le b\le p(a) \vee b\le a \le p(b) \vee p^{-1} (a) \le b\le a \vee p^{-1} (b) \le a \le b$, for $\wp_p(a) = +$ or $\wp_p (b) =+$. 
\item $a\le b\le p^{-1} (a) \vee b\le a \le p^{-1} (b) \vee p(a) \le b\le a \vee p (b) \le a \le b$, for $\wp_p(a) = -$ or $\wp_p (b) =-$.
\end{itemize} 
Let $\sim_p$ be the equivalence relation on $\mathbb{Q}$ generated by $p$-relatedness. 
We will call $\sim_p$-classes {\em $p$-orbitals}.    
The ordering of $\mathbb{Q}$ induces the ordering $\preceq$ of the set of $p$-orbitals. 
$p$-Orbitals intersecting $\mathsf{Dom}(p) \cup \mathsf{Rng} (p)$ have well defined parity.    
We will call them {\em colored $p$-orbitals}. 
When $p$ extends to an automorphism $\gamma$ the function $\wp_{\gamma}$ induces parity of $p$-orbitals. 
It coincides with $\wp_{p}$ on orbitals intersecting $\mathsf{Dom}(p) \cup \mathsf{Rng} (p)$ (the colored ones).

\section{Colored orderings, their homorphisms and amalgamation}

We need some preliminary material concerning colored linear orders. 
The results of this section do not depend on the main theme of the paper.  
It seems to us that they are interesting by themselves.

\subsection{Orderings colored into 3 colors and into 7 colors}

In this section we consider a linear ordering $(L, \le )$ together with a function $\kappa: L \rightarrow \Delta$ where $\Delta$ is a finite set of colors. 
We consider some cases when $|\Delta| = 3$ and $|\Delta|=7$.  
In the former case $\Delta = \{ +,-,0\}$. 

\begin{definition} \label{chi}
Let $(L, \le , \kappa )$ be a colored linear ordering where 
$\kappa :  L\rightarrow \{ +, -, 0 \}$. 
Let $L_0$ be a convex subset of $L$ and $\varepsilon \in \{ +,-,0\}$. 
\begin{itemize} 
\item We say that $L_0$ is of type $\infty_{+,-,0}$ if for each natural number $n$ there is a sequence $a_1 < a_2 < \ldots < a_{3(n+1)}$ in $L_0$ such that  
\[ 
\kappa (a_{3i +1}) = + \, , \, \kappa (a_{3i +2}) = - \, ,  \, \kappa (a_{3i +3}) = 0 \, \mbox{ where } i \le n.  
\] 
\item  We say that $L_0$ is of type $\varepsilon$ if $\kappa (L_0)= \{ \varepsilon\}$ and, moreover, in the case $\varepsilon =0$ the set $L_0$ consists of a single element. 
\item We say that $L_0$ is of type $\infty_0$ if $L_0$ is infinite and  $\kappa (L_0)= \{ 0\}$. 
\item Let $\varepsilon' \in \{ +,-,0\} \setminus \{ \varepsilon\}$. 
We say that $L_0$ is of type 
$\infty_{\varepsilon \varepsilon'}$ if 
$\kappa (L_0 ) = \{ \varepsilon, \varepsilon' \}$ and for each natural number $n$ there is a sequence $a_1 < a_2 < \ldots < a_{2(n+1)}$ in $L_0$ such that  
\[ 
\kappa (a_{2i +1}) = \varepsilon \, , \, \kappa (a_{2i +2}) = \varepsilon' , \, \mbox{ where } i \le n.  
\] 
\end{itemize} 
\end{definition}

Note that the types $\infty_{\varepsilon \varepsilon'}$ and $\infty_{\varepsilon'\varepsilon}$ coincide. 
It will be convenient to use the pair $+-$, $+0$ and $-0$ in the notation. 
In particular in Definition \ref{chi} eight types appear. 
Seven of them will be used as colors later. 

\begin{proposition} \label{p-epsilons}
Let $(L, \le )$ be an  infinite linearly ordered set and let 
$\kappa: L\rightarrow \{ +, -, 0 \}$ be a function coloring  
$L$ into three colors $+,-,0$. 
Assume that $L$ is not of type $\infty_{+,-,0}$.  
Then the ordering $(L, \le )$ can be decomposed into finitely many intervals 
$L = L_1 \, \dot{\cup}\,  L_2 \, \dot{\cup} \, \ldots \dot{\cup} \, L_m$ with the natural  ordering 
$L_1 \prec L_2 \prec \ldots \prec L_m$ such that $|\kappa (L) | \le 2$ for any $i\le m$.  
\end{proposition}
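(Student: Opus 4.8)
The plan is to obtain the decomposition as the family of level sets of a suitable rank function on $L$. Fix the periodic colour word $W = W_1 W_2 W_3 \ldots$ over $\{+,-,0\}$ defined by $W_p = +$ if $p \equiv 1$, $W_p = -$ if $p \equiv 2$ and $W_p = 0$ if $p \equiv 0 \pmod 3$; thus the increasing sequences $a_1 < \cdots < a_{3(n+1)}$ occurring in Definition \ref{chi} are exactly the finite increasing sequences in $L$ whose colour word is a prefix of $W$. I will call such a sequence a \emph{pattern sequence}. For $x \in L$ set $\rho(x)$ to be the maximal length of a pattern sequence all of whose terms lie in $\{y \in L : y \le x\}$ (the empty sequence counts, so $\rho(x) \ge 0$).

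First I would record the two easy properties of $\rho$. Since every prefix of a pattern sequence is again a pattern sequence, the existence of pattern sequences of a given length is downward closed in that length; hence the assumption that $L$ is \emph{not} of type $\infty_{+,-,0}$, which says that for some $n$ there is no pattern sequence of length $3(n+1)$, forces that there is none of any larger length either. So $\rho$ is bounded by a natural number $N$ and takes finitely many values. Moreover $\{y \le x\} \subseteq \{y \le x'\}$ whenever $x \le x'$, so $\rho$ is non-decreasing along $(L,\le)$; consequently each level set $L^{(k)} := \{x \in L : \rho(x) = k\}$ is convex, and the non-empty ones are ordered by $\prec$ in order of increasing $k$. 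This already exhibits $L$ as a disjoint union of finitely many convex pieces $L^{(k)}$ with $L^{(0)} \prec L^{(1)} \prec \cdots$, and it remains only to bound the number of colours on each piece.

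The heart of the argument, and the step I expect to require the most care, is the claim that on $L^{(k)}$ the colour $W_{k+1}$ never occurs, whence $|\kappa(L^{(k)})| \le 2$. I would argue by contradiction: suppose $x \in L^{(k)}$ satisfies $\kappa(x) = W_{k+1}$, and choose a pattern sequence $s = (a_1 < \cdots < a_k)$ of length $k = \rho(x)$ inside $\{y \le x\}$. If its last term satisfies $a_k < x$, then appending $x$ yields an increasing sequence with colour word $W_1 \cdots W_k W_{k+1}$, that is, a pattern sequence of length $k+1$ in $\{y \le x\}$, contradicting $\rho(x) = k$. If instead $a_k = x$, then $\kappa(x) = W_k$ because $s$ is a pattern sequence, whereas by assumption $\kappa(x) = W_{k+1}$; since consecutive letters of $W$ are distinct we have $W_k \ne W_{k+1}$, again a contradiction. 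Hence no such $x$ exists.

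Collecting the non-empty level sets and relabelling them $L_1 \prec \cdots \prec L_m$ then gives the required decomposition: there are at most $N+1$ of them, they are convex and $\prec$-ordered, and each omits the colour $W_{k+1}$, so $|\kappa(L_i)| \le 2$. I would close by remarking that infiniteness of $L$ plays no role here; the only hypothesis actually used is the failure of type $\infty_{+,-,0}$, entering solely through the boundedness of $\rho$.
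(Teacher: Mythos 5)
Your proof is correct, but it follows a genuinely different route from the paper. You build the decomposition globally, as the level sets of the monotone rank function $\rho(x)$ counting the longest increasing sequence below $x$ whose colour word is a prefix of the periodic word $+,-,0,+,-,0,\ldots$; boundedness of $\rho$ is exactly the failure of type $\infty_{+,-,0}$, monotonicity makes the level sets convex and $\prec$-ordered, and your key claim that $L^{(k)}$ omits the colour $W_{k+1}$ is proved by a clean two-case appending argument (only the degenerate case $k=0$, where the chosen pattern sequence is empty and has no last term, needs a one-line patch: appending $x$ to the empty sequence already gives the contradiction). The paper instead fixes a minimal $n$ for which the type fails together with a witnessing sequence $a_1 < \cdots < a_{3n}$ of colour word $(+,-,0)^n$, and then subdivides each of the finitely many intervals $[-\infty,a_1], [a_1,a_2], \ldots, [a_{3n},+\infty]$ into at most three $(\le 2)$-coloured pieces, using maximal initial/final subintervals avoiding a prescribed colour and deriving a contradiction with minimality of $n$ when a middle piece would carry all three colours. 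Your approach buys uniformity and economy: no case analysis over the position of the interval, an explicit bound ($\rho \le 3(n+1)-1$, hence at most $3(n+1)$ pieces), the sharper conclusion that each piece omits a \emph{specific} colour, and, as you note, no use of the infiniteness hypothesis. The paper's approach buys a decomposition anchored to an explicit witnessing sequence, and its maximal-subinterval technique is the same device reused immediately afterwards in Lemma \ref{l-epsilons}, so the two proofs there share their machinery.
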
 

\begin{proof} 
Choose the minimal $n$ such that the situation of the definition of type $\infty_{+,-,0}$ cannot be realized. 
Choose a sequence 
$a_1 < a_2 < \ldots < a_{3n}$ where 
\[ 
\kappa (a_{3i +1}) = + \, , \, \kappa (a_{3i +2}) = - \, ,  \, \kappa (a_{3i +3}) = 0 \, \mbox{ for } i < n.  
\] 
We will assume that $n>0$. 
The case $n=0$ with  $|\kappa (L) | = 3$ can be treated in the same way. 
Instead of a sequence af $a_i$-s we just fix one $a_1$ and apply the arguments below. 

Let $[-\infty, a_{1}]$ denote the initial interval of $L$ of all $x\le a_1$.  
We claim that 
\begin{quote} 
each interval 
$[-\infty, a_{1}], [a_{1},a_{2}], [a_{2},a_{3}], \ldots ,[a_{3n}, +\infty]$ can be decomposed into  at most three $(\le 2)$-colored intervals. 
\end{quote} 
Let $i<n$ and let $A= [a_{3i+1},a_{3i+2}]$. 
We may assume that $|A| >2$. 
Take the maximal initial subinterval of $A$, say $A_1$, such that $- \not\in \kappa(A_1 )$. 
Let $A_4$ be the maximal final subinterval of $A$ such that $+ \not\in \kappa (A_4 )$. 
Decompose $A= A_1 \dot{\cup} A_2$ and $A= A_3 \dot{\cup} A_4$. 
Let $A' =A_2 \cap A_3$.  

If $A'=\emptyset$ then $A=A_1 \, \dot{\cup} \, (A\setminus A_1 )$ is a required decomposition.   
If $A'\not= \emptyset$, then $A = A_1 \, \dot{\cup} \, A' \, \dot{\cup} \, A_4$ and 
\begin{quote} 
for each $a\in A'$ there are $a' \in A'$ and $a''\in A'$ such that $a'\le a \le a''$ and $\kappa (a') = - \wedge \kappa (a'') = +$. 
\end{quote} 
If there is $a\in A'$ with $\kappa (a) = 0$ then adding it to the sequence of $a_i$-s together with corresponding $a'$ and $a''$ we get a contradiction with the choice of $n$.  
As a result we see that the sets $\kappa (A_1)$, $\kappa (A')$ and $\kappa (A_4 )$ are of size $<3$. 

The cases $A= [a_{3i+2},a_{3i+3}]$ and $A= [a_{3i},a_{3i+1}]$ are similar.  
In the case $A= [-\infty ,a_{1}]$ we decompose $A= A_1 \dot{\cup} A_2$ and $A= A_3 \dot{\cup} A_4$ so that $A_1$ is the maximal initial subinterval of $A$ such that $+ \not\in \kappa(A_1 )$ and $A_4$ is the maximal final subinterval of $A$  such that $0 \not\in \kappa(A_4 )$ (it can happen that $A_1 =\emptyset$). 
Then we repeat the argument above. 
The case $A = [a_{3n}, +\infty]$ is dual. 
\end{proof} 

\begin{lemma} \label{l-epsilons}
Let $(L, \le )$ be an  infinite linearly ordered set and let 
$\kappa: L\rightarrow \{ \sharp_1, \sharp_2 \}$ be a function coloring $L$ into two colors. 
Assume that there is a natural number $n$ such that the set $L$ does not have a sequence $a_1 < a_2 < \ldots < a_{2(n+1)}$ where 
\[ 
\kappa (a_{2i +1}) = \sharp_1 \, , \, \kappa (a_{2i +2}) = \sharp_2 \, ,  \mbox{ with } i \le n.  
\] 
Then the ordering $(L, \le )$ can be decomposed into finitely many intervals 
$L = L_1 \, \dot{\cup}\,  L_2 \, \dot{\cup} \, \ldots \dot{\cup} \, L_m$ with 
$L_1 \prec L_2 \prec \ldots \prec L_m$ such that $|\kappa (L) | = 1$ for any $i\le m$.  
\end{lemma}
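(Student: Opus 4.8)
The plan is to attach to every point of $L$ a bounded, monotone numerical invariant whose level sets directly give the required decomposition; this is the two-colour analogue of the counting argument in Proposition \ref{p-epsilons}, only simpler. Concretely, for $x \in L$ I would define $\ell(x)$ to be the maximal length of a finite increasing sequence $b_1 < b_2 < \cdots < b_t \le x$ whose colours strictly alternate, i.e. $\kappa(b_i) \neq \kappa(b_{i+1})$ for all $i < t$. This is well defined and satisfies $\ell(x) \ge 1$ for every $x$, since the one-term sequence $x$ always qualifies.

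First I would record two elementary properties. \emph{Monotonicity}: if $x \le y$, then any alternating sequence witnessing $\ell(x)$ has all terms $\le x \le y$, so $\ell(y) \ge \ell(x)$. \emph{Boundedness}: an alternating sequence of length $t$ either begins with $\sharp_1$, or begins with $\sharp_2$ and, after discarding its first term, becomes an alternating block beginning with $\sharp_1$ of length $t-1$. In either case it contains an alternating block starting with $\sharp_1$ of length $\ge t-1$, and once that length reaches $2(n+1)$ its initial segment is exactly the configuration forbidden by hypothesis. Hence $t - 1 < 2(n+1)$, so $\ell(x) \le 2n+2$ for all $x$, and $\ell$ takes at most $2n+2$ distinct values.

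The heart of the argument is the claim that each level set $\{x \in L : \ell(x) = k\}$ is monochromatic, and this is the only step I expect to carry real content. Suppose not: there are $x < y$ in the same level with $\kappa(x) \neq \kappa(y)$; fix an alternating witness $b_1 < \cdots < b_k \le x$ for $\ell(x) = k$. If $\kappa(y) \neq \kappa(b_k)$, then $b_1 < \cdots < b_k < y$ is alternating of length $k+1$ with all terms $\le y$, so $\ell(y) \ge k+1$. If instead $\kappa(y) = \kappa(b_k)$, then $\kappa(x) \neq \kappa(b_k)$; since $b_k \le x$ and $b_k = x$ would force $\kappa(b_k) = \kappa(x)$, we must have $b_k < x$, whence $b_1 < \cdots < b_k < x$ is alternating of length $k+1$ and $\ell(x) \ge k+1$. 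Either way this contradicts $x, y$ both lying in level $k$. The point to watch here is precisely the boundary case $b_k = x$, which must be excluded rather than silently assumed away.

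Finally, since $\ell$ is monotone, its level sets are convex, pairwise $\preceq$-comparable, and together cover $L$. Listing the nonempty ones in increasing order yields a decomposition $L = L_1 \, \dot{\cup} \, L_2 \, \dot{\cup} \, \cdots \, \dot{\cup} \, L_m$ with $L_1 \prec L_2 \prec \cdots \prec L_m$ and $m \le 2n+2$, and by the claim each $L_i$ is monochromatic, i.e. $|\kappa(L_i)| = 1$. This is exactly the asserted conclusion.
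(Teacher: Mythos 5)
Your proof is correct, but it takes a genuinely different route from the paper's. The paper proves Lemma \ref{l-epsilons} by the same template as Proposition \ref{p-epsilons}: it takes the \emph{minimal} $n$ satisfying the hypothesis, fixes an extremal alternating witness $a_1 < \cdots < a_{2n}$, and then argues by cases that each of the intervals $[-\infty,a_1], [a_1,a_2], \ldots, [a_{2n},+\infty]$ splits into at most two monochromatic pieces (the key step being that a pair $a' < a''$ of colors $\sharp_2, \sharp_1$ strictly inside such an interval could be spliced into the witness to produce the forbidden configuration of length $2(n+1)$). You instead introduce the rank $\ell(x)$ — the longest alternating sequence ending at or below $x$ — and obtain the decomposition as the level sets of $\ell$; your monochromaticity claim, including the correctly handled boundary case $b_k = x$, is the analogue of the paper's splicing step. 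Your version buys several things: it needs no minimality of $n$ and no extremal witness, it avoids the case analysis over interval types (which the paper leaves as ``the remaining cases are similar''), it yields convexity and the ordering $L_1 \prec \cdots \prec L_m$ for free from monotonicity of $\ell$, and it gives the explicit bound $m \le 2n+2$. What the paper's approach buys is uniformity with Proposition \ref{p-epsilons}: there the conclusion is only that each piece is $(\le 2)$-colored rather than monochromatic, so a single linear rank with monochromatic level sets does not directly apply, whereas the pivot-and-split template covers both the three-color and two-color statements with one argument.
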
 

\begin{proof} 
We use the same strategy of the proof as in Proposition \ref{p-epsilons}. 
Choose the minimal $n$ satisfying the condition of the formulation. 
We may assume that $n>0$. 
Then:  
\begin{quote} 
each interval  $[-\infty, a_{1}], [a_{1},a_{2}], [a_{2},a_{3}], \ldots ,[a_{2n}, +\infty]$ can be decomposed into  at most two monochromatic intervals. 
\end{quote} 
For example consider $A= [a_{2i+1}, a_{2i +2}]$ with $i<n$. 
Let $A_1$ be the maximal initial interval with $\kappa (A_1) =\sharp_1$.  
If $|\kappa (A\setminus A_1)|=2$ then $A\setminus A_1$ contains a pair $a' < a''$ with $\kappa (a') = \sharp_2$ and $\kappa (a'') = \sharp_1$. 
This contradicts the choice of $n$. 
The remaining cases are similar. 
\end{proof}

We now introduce new colors and some partial ordering of them. 
Let 
\[ 
\chi = \{ \{ + \} , \{ - \} , \{ 0 \} , \infty_0 \} \cup 
\{ \infty_{\varepsilon_1\varepsilon_2} \, | \, \varepsilon_1  \varepsilon_2 \in \{ +-,+0, -0\} \} . 
\] 
The ordering of $\chi$ corresponds to the relation $\subseteq$. 
In particular when 
$\varepsilon \in \{ \varepsilon_1 , \varepsilon_2 \}$ we put $\{ \varepsilon \} \subset \infty_{\varepsilon_1 , \varepsilon_2 }$. 
We also put $\{ 0 \} \subset \infty_0 \subset \infty_{+0}$ 
and $\infty_0 \subset \infty_{-0}$. 
Note that 
\[ 
|\chi | = 7.  
\] 

Let $(L, \le, \kappa )$ be a $\{ +,-,0\}$-colored linear ordering. 
To each decomposition 
$L = \dot{\bigcup} \{ L_i \,  |  \, i\le n \}$ into finitely many intervals of types 
\[ 
\tau \in \{ + ,  -  ,  0  , \infty_0 \} \cup 
\{ \infty_{\varepsilon_1\varepsilon_2} \, | \, \varepsilon_1  \varepsilon_2 \in \{ +-,+0, -0\} \}  
\] 
we associate a $\chi$-coloring $\hat{\kappa}$ of the family 
$\{ L_i \, | \, i \le n \}$ as follows:  
\begin{quote} 
$\hat{\kappa}(L_i) = \{ \tau \}$ if $L_i$ is of type $\tau \in \{ + ,  -  ,  0 \}$ \\ 
$\hat{\kappa}(L_i) = \tau \, \,\, \, $ if $L_i$ is of type 
$\tau \in \{ \infty_0 \} \cup \{ \infty_{\varepsilon_1\varepsilon_2} \, | \, \varepsilon_1  \varepsilon_2 \in \{ +-,+0, -0\} \}$.
\end{quote} 
As a result the family $\mathcal{L}= \{ L_i \, | \, i \le n \}$ becomes a finite $\chi$-colored ordered set where the ordering is just $\prec$. 

We use the structure of $\chi$ in the following definition. 

\begin{definition} \label{d-epsilons} 
Let $(\mathcal{L}, <, \hat{\kappa} )$ be a finite $\chi$-colored linear ordering. 
We say that $(\mathcal{L}, <, \hat{\kappa} )$ is canonical if 
for any $x,y \in \mathcal{L}$ the following conditions are saidfied: 
\begin{itemize} 
\item if $x\not= y$ and 
$\hat{\kappa} (x) \subseteq \hat{\kappa} (y )$ then  
there is $u \in \mathcal{L}$ such that $x< u < y$ or $y < u <x$.  
\item 
if there is $u \in \mathcal{L}$ such that $x< u < y$ or $y < u <x$ and for some 
$\varepsilon_1 \varepsilon_2\not=  \varepsilon'_1 \varepsilon'_2$ we have 
$\hat{\kappa} (x) =\infty_{\varepsilon_1 \varepsilon_2}$ and 
$\hat{\kappa} (y) =\infty_{\varepsilon'_1 \varepsilon'_2}$ then one of the following statements holds:  

(a) there are $z_1 , z_2 \in \mathcal{L}$ such that $x< z_1 <z_2 < y$ or $y < z_1 < z_2 <x$ and 
$\hat{\kappa} (z_1) , \hat{\kappa} (z_2) \in \{ \{ +\}, \{ - \}, \{ 0 \} , \infty_0 ,\infty_{\varepsilon_1 \varepsilon_2}, \infty_{\varepsilon'_1 \varepsilon'_2} \}$ and $\hat{\kappa} (z_1) \not= \hat{\kappa} (z_2)$;  

(b) there is $z$ such that  
$\hat{\kappa} (z)$ is of the form 
$\infty_{\varepsilon \varepsilon'}$ where 
$\varepsilon \not\subseteq \hat{\kappa} (x)\wedge 
\varepsilon'\not\subseteq \hat{ \kappa}(y)$ and  
$(x< z < y) \vee (y < z <x)$. 
\end{itemize} 
\end{definition} 

The following theorem is the final result of this section. 

\begin{theorem} \label{decomposition} 
Let $(L, \le, \kappa )$ be a $\{ +,-,0\}$-colored linear ordering. 
Assume that $\mathcal{L}$ is not of type $\infty_{+,-,0}$. 
Then there is a decomposition 
$L = L_1 \, \dot{\cup} \,  L_2 \, \dot{\cup} \ldots  \dot{\cup} \, L_n$ into finitely many intervals such that the corresponding $\chi$-colored ordering 
$(\mathcal{L},\preceq ,\hat{\kappa})$ is canonical. 
Furthermore, for any two finite decompositions of $L$ defining canonical $\chi$-colored orderings, these orderings are isomorphic. 
\end{theorem}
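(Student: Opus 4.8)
The plan is to prove the two assertions separately, and throughout to treat the \emph{word} $\hat{\kappa}(L_1),\ldots,\hat{\kappa}(L_n)$ read off a decomposition as the central object. This is because an isomorphism of finite $\chi$-colored linear orderings is nothing more than an equality of such words: a finite linear ordering is rigid, so the only candidate isomorphism is the monotone bijection, and it respects colors exactly when the two color sequences coincide. For existence I would first apply Proposition \ref{p-epsilons} to cut $L$ into finitely many intervals each using at most two colors. Inside each such piece Lemma \ref{l-epsilons} applies: either it carries arbitrarily long $\varepsilon\varepsilon'$-alternations, hence has type $\infty_{\varepsilon\varepsilon'}$, or it splits into finitely many monochromatic intervals, which are of type $+$, of type $-$, of type $\infty_0$ (infinite $0$-blocks), or finite unions of $0$-singletons. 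This produces a decomposition all of whose members lie in $\chi$. I would then \emph{normalize}: as long as two consecutive intervals carry $\subseteq$-comparable colors, merge them, absorbing the smaller type into the larger. One checks case by case that the union of a $\tau$-interval with an adjacent interval of type $\subseteq\tau$ is again of type $\tau$ (the colors and the defining alternations or infinitude of $\tau$ are preserved), and each merge lowers the number of intervals, so the process halts. The first clause of Definition \ref{d-epsilons} then holds by construction.

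The point is that the second clause comes for free. I would isolate this as a lemma: any decomposition with no two consecutive $\subseteq$-comparable colors is canonical. The argument is a short analysis of the intervals strictly between two members $x,y$ of distinct types $\infty_{\varepsilon_1\varepsilon_2}$ and $\infty_{\varepsilon'_1\varepsilon'_2}$. If there is a single separator $u$, then non-mergeability into either neighbor forces the color set of $u$ to contain the color missing from $\hat{\kappa}(x)$ and the color missing from $\hat{\kappa}(y)$; since $\{\varepsilon_1,\varepsilon_2\}\neq\{\varepsilon'_1,\varepsilon'_2\}$, these are two distinct colors, so $\hat{\kappa}(u)$ is the third two-color infinite type and clause (b) holds. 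If there are at least two separators, then consecutive ones carry distinct colors (else they merge), so two of them have different colors; picking such a pair gives clause (a), unless one is that third infinite type, in which case (b) holds.

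For uniqueness I would introduce a rewriting system on finite words over $\chi$ mirroring the merging step: replace an adjacent $\subseteq$-comparable pair by its larger letter. It is terminating (length strictly decreases) and locally confluent — a diamond check over the overlapping triples $a,b,c$, using transitivity of $\subseteq$ on $\chi$, shows the two rewrites always reconverge — hence confluent by Newman's lemma, with a unique reduced normal form for each word. A $\chi$-colored decomposition is canonical precisely when its word is reduced. Now, given two canonical decompositions $C,C'$ of the same $L$, I would form a common refinement $D$ into $\chi$-typed intervals (re-decomposing each piece via Lemma \ref{l-epsilons}). The key sublemma is that the subword of $D$ lying inside one $C$-interval of type $\tau$ reduces to the single letter $\tau$: a finite refinement of a $\tau$-interval must contain a piece that still realizes the defining alternations of $\tau$ (or its infinitude, when $\tau=\infty_0$), and every other piece has color $\subseteq\tau$ and is absorbed into it. Hence the word of $D$ reduces to the word of $C$ and likewise to the word of $C'$; by confluence these reduced words coincide, so $C\cong C'$.

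The main obstacle I expect is exactly this last confluence step, and the conceptual shift it forces. The partition underlying a canonical decomposition is genuinely \emph{not} unique — a monochromatic block wedged between two infinite intervals may be attached to either side, both choices being reduced — so there is no literal normal form at the level of partitions, and one must argue entirely at the level of words, where the two attachments yield the same letter sequence. A secondary care point is the finite monochromatic $0$-block, which has no single $\chi$-type; this is harmless when $L$ is dense, as in all the intended applications (there adjacent $0$-singletons, equivalently adjacent fixed-point orbitals, simply cannot occur), and I would either assume density or dispose of such blocks at the word level, where $\{0\}\{0\}\to\{0\}$ causes no trouble.
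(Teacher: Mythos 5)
Your proof is correct, but it follows a genuinely different route from the paper's, most visibly in the uniqueness half. For existence, the paper works globally: it takes the (finitely many, by Proposition \ref{p-epsilons}) \emph{maximal} subintervals of each type $\infty_{\varepsilon_1\varepsilon_2}$, patches up their monochromatic overlaps by assigning the overlap to one side, and decomposes the complement $L^{\perp}$ into monochromatic intervals via Lemma \ref{l-epsilons}; canonicity of the result is then asserted as ``easy to see.'' You instead refine first (Proposition \ref{p-epsilons} followed by Lemma \ref{l-epsilons} inside each piece) and then greedily merge adjacent $\subseteq$-comparable intervals, and you isolate the clean characterization that a typed decomposition is canonical exactly when its color word has no adjacent comparable pair --- a lemma the paper never states, and whose single-separator/multi-separator case analysis is exactly the content hidden in the paper's ``easy to see.'' For uniqueness, the paper argues directly that in any canonical decomposition the $\infty_{\varepsilon_1\varepsilon_2}$-intervals must agree with the maximal ones up to monochromatic end-adjustments, and dismisses the complement with another ``easy to see''; your rewriting system on words over $\chi$ (termination plus local confluence, Newman's lemma, then a common refinement whose word reduces to the word of each canonical decomposition) is heavier machinery but substantially more complete, and it correctly pinpoints the subtlety that the underlying \emph{partition} is not unique (a monochromatic block wedged between two compatible infinite intervals can be attached to either side) while the color word is --- precisely the point the paper's overlap-correction step handles only loosely. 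Finally, the finite monochromatic $0$-block you flag is a real edge case: a two-element all-$0$ ordering admits no canonical decomposition at all, so the theorem as literally stated needs the density-type hypothesis you propose (which holds for the orbital orderings in all of the paper's applications, where two parity-$0$ orbitals are never adjacent); the paper's own proof silently glosses over this same point when it decomposes $L^{\perp}$ into monochromatic intervals, so this is a defect of the statement you inherited, not a gap in your argument.
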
 

\begin{proof} 
Let us find a canonical decomposition. 
Take any $\varepsilon_1 \varepsilon_2 \in \{ +-, +0, -0 \}$. 
Let 
$L^{\varepsilon_1 \varepsilon_2}_1 , \ldots , L^{\varepsilon_1 \varepsilon_2}_i , \ldots$ be the family of all maximal subintervals of $L$ of type 
$\infty_{\varepsilon_1 \varepsilon_2}$. 
It obviously consists of pairwise disjoint intervals, i.e. every two intervals 
$L^{\varepsilon_1 \varepsilon_2}_i$ and $L^{\varepsilon_1 \varepsilon_2}_j$ are separated by a point of the color distinct from $\varepsilon_1$ and $\varepsilon_2$. 
By Proposition \ref{p-epsilons} this family is finite. 

Consider the set 
\[ 
L^{\perp} = L \setminus \bigcup \{ L' \, | \, L' 
\mbox{ coincides with some } L^{\varepsilon_1 \varepsilon_2}_i 
\mbox{ for some }\varepsilon_1 \varepsilon_2 \in\{ +-, +0, -0\} \} . 
\] 
Applying Lemma \ref{l-epsilons} to subintervals of $L^{\perp}$ we see that $L^{\perp}$ can be decomposed into a union of finitely many monochromatic intervals. 

Assume 
$\{\varepsilon_1 ,\varepsilon_2 \} \not= \{\varepsilon'_1 ,\varepsilon'_2\}$. 
Then $L^{\varepsilon_1 \varepsilon_2}_i \cap L^{\varepsilon'_1 \varepsilon'_2}_j$ is either monochromatic or empty. 
In the former case to reach disjointness one can assign the monochromatic part to only one of them. 
Then we do not insist on maximality of the another one. 
If $L^{\varepsilon_1 \varepsilon_2}_i$ and $L^{\varepsilon'_1 \varepsilon'_2}_j$ are not neighbours and 
$L^{\varepsilon_1 \varepsilon_2}_i \prec L^{\varepsilon'_1 \varepsilon'_2}_j$, then 
they should be separated by a pair of points, say $z_1 < z_2$ 
where $\kappa (z_1) \not\in \{\varepsilon_1 ,\varepsilon_2\}$ and $\kappa (z_2 ) \not\in \{\varepsilon'_1 ,\varepsilon'_2\}$.  

Now let $(\mathcal{L}, \prec, \hat{\kappa} )$ be the ordered set of all intervals of the form $L^{\varepsilon_1 \varepsilon_2}_i$ (after the correction above) and all maximal monochromatic intervals forming $L^{\perp}$.   
Let $\hat{\kappa}$ be the $\chi$-coloring constructed accordingly to the procedure given before Definition \ref{d-epsilons}. 
It is easy to see that it is canonical. 

Let us prove that a canonical ordering is defined uniquely. 
Assume that 
$L = I_1 \, \dot{\cup} \,  I_2 \, \dot{\cup} \ldots  \dot{\cup} \, I_m$ is a decomposition of $L$ giving a canonical $\chi$-ordering. 
Since the decomposition is finite for any $L^{\varepsilon_1 \varepsilon_2}_i$ there is $I_j$ such that 
$L^{\varepsilon_1 \varepsilon_2}_i \cap I_j$ is also of type $\infty_{\varepsilon_1 \varepsilon_2}$. 
If that $I_j$ is not maximal then $I_{j+1}$ (or $I_{j-1}$) is of type $\infty_{\varepsilon'_1 \varepsilon'_2}$ with 
$\{ \varepsilon_1 ,\varepsilon_2\}\cap \{\varepsilon'_1 ,  \varepsilon'_2 \} \not=\emptyset$, i.e. extending $I_j$ by a monochromatic subinterval of $I_{j+1}$ (resp. $I_{j-1}$) we obtain a maximal one. 
Moreover, as we already know, $I_{j+1}$ (resp. $I_{j-1}$) has an intersection with some $L^{\varepsilon'_1 \varepsilon'_2}_k$ of type $\infty_{\varepsilon'_1 \varepsilon'_2}$. 
This argument leads us to the observation that $L^{\perp}$ defined in the first part of the proof coincides with  
\[ 
L \setminus \bigcup \{ I_j \, | \, I_j 
\mbox{ is of type } \infty_{\varepsilon_1 \varepsilon_2} 
\mbox{ for some }\varepsilon_1 \varepsilon_2 \in\{ +-, +0, -0\} \} . 
\] 
On the other hand it is easy to see that a canonical decomposition of $L^{\perp}$ is  unique. 
\end{proof}

\subsection{CAP for colored orderings} 

We study categories of colored orderings with respect to some special homomorphisms. 
This is why we start this section by two following definitions. 

\begin{definition} \label{kappa-hom} 
Let $(L, <, \kappa )$ and $(L', <', \kappa' )$ be $\{ +,-,0\}$-colored linear orderings. 
A map $\phi: L \rightarrow L'$ 
is called a $\kappa$-homomorphism if for any 
$x_1 ,x_2 ,x \in L$  
\[
\kappa (x ) = \kappa' (\phi(x)) , 
\]
\[ 
x_1 \le x_2 \Rightarrow \phi (x_1 ) \le' \phi (x_2 ) \,  
\] 
and $\phi$ is injective on the set of all elements of color $0$.   
\end{definition}

\begin{definition} \label{chi-hom} 
Let $(\mathcal{L}, <, \hat{\kappa} )$ and $(\mathcal{L}', <', \hat{\kappa}' )$ be $\chi$-colored linear orderings. 
A map $\phi: \mathcal{L} \rightarrow \mathcal{L}'$ 
is called a $\chi$-homomorphism if for any 
$x_1 ,x_2 ,x \in \mathcal{L}$  
\[ 
x_1 \le x_2 \Rightarrow \phi (x_1 ) \le' \phi (x_2 ) \, \mbox{ and } 
\] 
\[
\hat{\kappa} (x) \subseteq \hat{\kappa}' (x ) ). 
\] 
\end{definition} 

\begin{remark} \label{phi-inj}
{\em 
It is worth noting that when $(\mathcal{L}, <, \hat{\kappa} )$ in Definition \ref{chi-hom} is a canonical $\chi$-colored linear orderings and $\phi (x) = \phi (y)$ for some $x,y \in \mathcal{L}$, then $x$ and $y$ are monochromatic of distinct colors and 
$\hat{\kappa} (x) \cup \hat{\kappa}(y) = \hat{\kappa}' (\phi (x))$ or $x$ and $y$ are monochromatic of the same color and there is some $u$ between them such that  
$\hat{\kappa} (x) \cup \hat{\kappa} (u) \cup \hat{\kappa}(y) = \hat{\kappa}' (\phi (x))$ where $\hat{\kappa}' (\phi (x))$ is not monochromatic. 
Indeed, according to Definition \ref{d-epsilons} when $x$ and $y$ have compatible colors they should be separated by an element of an incompatible color.  
On the other hand when $x$ and $y$ have incompatible colors and one of them is not monochromatic then $\kappa'(\phi (x)) \not=\kappa'(\phi (y))$.}
\end{remark}

Let $(L, <, \kappa )$ be an infinite, countable $\{ +,-,0\}$-colored linear ordering which is not of type $\infty_{+,-,0}$.
According to Theorem \ref{decomposition} there is a decomposition 
\[ 
L = L_1 \, \dot{\cup} \,  L_2 \, \dot{\cup} \ldots  \dot{\cup} \, L_n 
\] 
into finitely many intervals such that the corresponding $\chi$-colored ordering 
$(\mathcal{L},\preceq ,\hat{\kappa})$ is canonical. 
Let us fix this $n$. 

Let $\mathcal{K}$ be the age of $(L, <, \kappa )$, i.e. the family of all finite substructures of $(L, <, \kappa )$. 
We consider $\mathcal{K}$ together with the family of all $\kappa$-homomorphisms. 
Amalgamation of members of $\mathcal{K}$ is considered in this category. 

\begin{definition} \label{rch}
Let $\mathcal{K}^{rch} \subset \mathcal{K}$ be the family of all members $(A, < , \kappa )\in \mathcal{K}$  
such that for any $L_i \in \mathcal{L}$ 
\begin{itemize} 
\item  the intersection $A\cap L_i$ is not empty; 
\item if $L_i$ is of type $\{ + \}$, $\{ - \}$ or $\{ 0 \}$, then $A\cap L_i$ is single;  
\item if $L_i$ is of type $\infty_{\varepsilon_1 \varepsilon_2}$ or $\infty_{0}$ then $A\cap L_i$ consists of an increasing sequence of size at least $n^n$ of pairs of the color $\varepsilon_1 , \varepsilon_2$ (resp. sequence of size at least $n^n$ of elements of color $0$).  
\end{itemize} 
\end{definition}

It is clear that $\mathcal{K}^{rch}$ is cofinal in $\mathcal{K}$ under $\kappa$-homomorphisms: any member of $\mathcal{K}$ maps into a member of $\mathcal{K}^{rch}$. 
The main theorem of this section states that $\mathcal{K}^{rch}$ witnesses CAP for $\mathcal{K}$. 

\begin{theorem} \label{rch-amal}
The subfamily $\mathcal{K}^{rch} \subset \mathcal{K}$
has the amalgamation property. 
\end{theorem}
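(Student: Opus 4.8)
The plan is to amalgamate along the fixed canonical decomposition $L = L_1 \, \dot{\cup} \, \cdots \, \dot{\cup} \, L_n$ block by block and then reassemble by concatenation. Each $A \in \mathcal{K}^{rch}$ carries the partition $A = \bigsqcup_{i \le n} (A \cap L_i)$ into convex colored pieces, and by Theorem \ref{decomposition} the induced $\chi$-coloring of this partition is exactly the fixed canonical ordering $(\mathcal{L}, \preceq, \hat{\kappa})$. So I would first establish a \emph{structure lemma} describing a $\kappa$-homomorphism $f : A \to A'$ between members of $\mathcal{K}^{rch}$. Since both meet every $L_i$ in the prescribed pattern and $f$ preserves colors and order, $f$ should induce the identity on $\mathcal{L}$ in bulk: a color that occurs in a globally unique block forces its carrier to be fixed, and such landmarks (together with monotonicity) pin each block $A \cap L_i$ into $A' \cap L_i$, the only possible deviation being at block boundaries.

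The boundary analysis is where Definition \ref{d-epsilons} does the work. Canonicity forbids two $\subseteq$-comparable colors from being adjacent, so in a rich member every maximal monochromatic non-$0$ run has length at most two, and a run of length two occurs only at a boundary between two $\infty$-blocks sharing a nonzero color, namely $\infty_{+-} \,|\, \infty_{+0}$ (a $+,+$ pair) or $\infty_{+-} \,|\, \infty_{-0}$ (a $-,-$ pair); the $0$-sharing boundary $\infty_{+0} \,|\, \infty_{-0}$ cannot be collapsed because $\kappa$-homomorphisms are injective on color $0$. Thus $f$ is injective except possibly for merging such a boundary pair, giving the $\kappa$-level analog of Remark \ref{phi-inj}. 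With this in hand the amalgamation localizes: for a single-point block ($\{+\}, \{-\}, \{0\}$) all three of $A_0, A_1, A_2$ contribute one matched point; for an $\infty_0$-block I would amalgamate the two monochromatic $0$-sequences as pure finite linear orders by a shuffle, unproblematic since the maps are injective there.

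For an $\infty_{\varepsilon_1 \varepsilon_2}$-block I would build the amalgam as a synchronized shuffle: over each gap cut out by the image of $A_0 \cap L_i$, place the contributions coming from $A_1 \cap L_i$ and from $A_2 \cap L_i$, keep the coloring, and interleave so that the resulting finite $\{\varepsilon_1, \varepsilon_2\}$-colored order refines both sides. Because the age of an $\infty_{\varepsilon_1 \varepsilon_2}$-piece of $L$ is all finite $\{\varepsilon_1, \varepsilon_2\}$-colored orders, this amalgam embeds back into $L_i$. Concatenating the block amalgams in the order $L_1 \prec \cdots \prec L_n$ yields a candidate $A_3$, with $g_1, g_2$ defined blockwise and $g_1 f_1 = g_2 f_2$ holding by construction on the glued image of $A_0$.

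The main obstacle, and the reason for the bound $n^n$ in Definition \ref{rch}, is reconciling the forced boundary merges of the two given homomorphisms. If $f_1$ merges a boundary pair $(a, a')$ but $f_2$ does not, then commutativity forces $g_2$ to merge $f_2(a)$ and $f_2(a')$, which is a legitimate $\kappa$-homomorphism only when the $A_2$-interval between them is monochromatic; the danger is a trapped point of the opposite color. Ruling this out is the heart of the argument: I would strengthen the structure lemma to show that the two homomorphisms present compatible boundary configurations, so that whenever one side collapses a boundary pair the other side does so up to a single boundary element, and no opposite-color point is trapped. Making this precise requires tracking how merges and the accompanying one-element shifts propagate along a chain of color-compatible blocks: each of the at most $n$ boundaries can emit or absorb a bounded number of elements, and in the worst case the effect cascades through all $n$ blocks, which is exactly why an exponential buffer is needed. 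The bound $n^n$ then guarantees that after these corrections every $\infty$-block of $A_3$ still contains its required increasing sequence of length at least $n^n$, so that $A_3 \in \mathcal{K}^{rch}$ rather than merely $A_3 \in \mathcal{K}$. Once this bookkeeping is done, checking that $g_1, g_2$ are $\kappa$-homomorphisms is routine, and combined with the already-noted cofinality of $\mathcal{K}^{rch}$ this is what ultimately yields CAP.
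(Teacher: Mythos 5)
Your overall architecture does match the paper's: reduce amalgamation to the blocks of the fixed canonical decomposition via a block-preservation lemma for $\kappa$-homomorphisms between members of $\mathcal{K}^{rch}$, then amalgamate block by block and concatenate. The genuine gap is in how you justify that lemma, which is the crux of the whole proof. Your ``landmark'' argument (a color occurring in a globally unique block pins everything, and monotonicity does the rest) fails for many canonical orderings: for instance $\mathcal{L} = \infty_{+-} \prec \infty_{+0} \prec \infty_{-0} \prec \infty_{+-}$ is canonical, yet each of the colors $+,-,0$ occurs in at least two blocks, so there are no landmarks at all; injectivity on color $0$ does not rescue the argument either, since the $0$-points of the $\infty_{+0}$-block of $A$ may outnumber those of the $\infty_{+0}$-block of the target and spill into the $\infty_{-0}$-block. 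The paper's Claim is proved by exactly the ingredient you set aside: the richness bound $n^n$ feeds a pigeonhole argument. Each $A\cap L_i$ of type $\infty_{\varepsilon_1\varepsilon_2}$ contains at least $n^n$ alternating pairs; a block of the target containing both colors $\varepsilon_1,\varepsilon_2$ must itself be of type $\infty_{\varepsilon_1\varepsilon_2}$, so many pairs land in a single same-type block, and since same-type blocks are separated in every rich member by a point of an incompatible color, the induced correspondence between same-type blocks is strictly increasing, hence the identity. So you have misassigned the role of $n^n$: in the paper it is the engine of block preservation, not a buffer against cascading boundary corrections.

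This misassignment matters because the part you yourself call ``the heart of the argument'' --- reconciling boundary merges of the two given homomorphisms by corrections that propagate through up to $n$ blocks --- is never actually carried out: you say you ``would strengthen the structure lemma,'' that each boundary ``can emit or absorb a bounded number of elements,'' but no such bound is derived and no construction is given, so your amalgamation step is incomplete precisely at its claimed core. In the paper this entire issue does not arise: once block preservation is established in its exact form, a merged pair $x<y$ would have to consist of two same-colored points lying in distinct blocks, whose images would then lie in the disjoint sets $B\cap L_i$ and $B\cap L_{i+1}$; hence $\kappa$-homomorphisms between rich members are injective (the observation the paper makes alongside Remark \ref{phi-inj}) and there are no merges to reconcile. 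The amalgam is then built blockwise by an explicit rule --- inside each gap cut out by consecutive points of the amalgamated copy of $A_0$, all points coming from one side precede all points coming from the other --- together with the parity observation that, because the gap's endpoints have different colors, each side contributes an even number of points, so the result is again an alternating sequence of pairs and thus lies in $\mathcal{K}^{rch}$. Your ``synchronized shuffle'' skips this verification too: an arbitrary interleaving that merely refines both sides can place two points of the same color next to each other and so destroy the alternating-pairs form required by Definition \ref{rch}.
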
 

\begin{proof} 
If $\mathcal{L}$ does not have intervals of type $\infty_{\varepsilon_1 \varepsilon_2}$ and $\infty_{0}$ then $\mathcal{K}^{rch}$ consists of a single structure and the statement of the theorem is obvious. 
From now on in this proof we assume that there are intervals of type $\infty_{\varepsilon_1 \varepsilon_2}$ or $\infty_{0}$.  

We start the general case with an easy observation (similar to Remark \ref{phi-inj}) that canonicity of $\mathcal{L}$ guarantees that $\kappa$-homomorphisms of members of $\mathcal{K}^{rch}$ are injective. 

We now need the following claim. 
\bigskip 

\noindent 
{\bf Claim.} 
{\em Let $(A, <, \kappa )$ and $(B, <, \kappa )$ be $\{ +,-,0\}$-colored linear orderings from $\mathcal{K}^{rch}$. 
Then any $\kappa$-homomorphism $\phi: A \rightarrow B$ 
preserves the intervals $L_i$ of the canonical decomposition $\mathcal{L}$. 
}

Let $L_i$ be of type $\infty_{\varepsilon_1 \varepsilon_2}$.   
By the pigeon hole principle there are at least $n$ pairs of the color $\varepsilon_1 \varepsilon_2$ in $A\cap L_i$ which are mapped to the same $B\cap L_k$ for some $L_k$ of type $\infty_{\varepsilon_1 \varepsilon_2}$. 
Since $\mathcal{L}$ is finite and any two $A \cap L_i$ and $A\cap L_j$ of type $\infty_{\varepsilon_1 \varepsilon_2}$ are separated in $A$ by an element of the color outside of $\{\varepsilon_1 ,\varepsilon_2\}$, then $L_i = L_k$ and, moreover, the whole $A\cap L_i$ is mapped to $B\cap L_i$. 
The same argument works for intervals $L_i$ of type $\infty_{0}$. 

If $L_i$ is of type $\{ + \}$, $\{ - \}$ or $\{ 0 \}$, then $A\cap L_i$ is single and the neighbors of $L_i$ in $\mathcal{L}$ are of incompatible colors. 
This guarantees the statement of the claim. 
\bigskip 

Let us verify the amalgamation property. 
Let $(A, <, \kappa )$, $(B, <, \kappa )$ and $(C, <,\kappa )$ be $\{ +,-,0\}$-colored linear orderings from $\mathcal{K}^{rch}$ and let $\phi_1 :  A\rightarrow B$ and $\phi_2 : A \rightarrow C$ be $\kappa$-homomorphisms. 
Using the claim above we see that it suffices to verify the amalgamation property only for substructures of the form $A \cap L_i$, $B\cap L_i$ and $C\cap L_i$ where $L_i$ is of type $\infty_{\varepsilon_1 \varepsilon_2}$ or $\infty_{0}$. 
The latter case is obvious. 
To see the former one we apply the disjont amalgamation:  
let 
\[ 
D\cap L_i = (B\cap L_i ) \cup_{\phi_1(A\cap L_i) = \phi_2 (A \cap L_i)} (C\cap L_i). 
\]
To get a linear ordering of $D\cap L_i$ we use the following rule.  
Let $a'\in A$ be the successor of $a \in A$ with respect to the ordering of $A$. 
Let the interval $(a,a')$ in $B$ be the sequence  
$a < b_1 < \ldots < b_s < a'$ and let $(a,a')$ in $C$ be 
$a < c_1 < \ldots < c_t < a'$. 
Then we present $(a,a')$ in $D$ by 
\[ 
a < b_1 < \ldots < b_s < c_1 < \ldots < c_t < a' . 
\] 
Since $\kappa (a) \not= \kappa (a' )$, the numbers $s$ and $t$ are even. 
Thus $\kappa (a) = \kappa (b_s ) = \kappa (c_t)$ 
and $\kappa (a') = \kappa (b_1 ) = \kappa (c_1 )$. 
As a result we see that $D\cap L_i$ is of the same type with $A\cap L_i$, i.e $\infty_{\varepsilon_1 \varepsilon_2}$.

\end{proof}

\section{The group $\mathsf{Aut}(\mathbb{Q},<)$ and classes $\mathcal{C}_{\rho}$}

In this section we study some closed invariant subsets of 
$\mathsf{Aut}(\mathbb{Q},<)$. 
We denote by $\mathcal{P}$ the set of all finite partial isomorphisms of $(\mathbb{Q},<)$. 
When $\rho \in \mathsf{Aut}(\mathbb{Q},<)$ let 
$\mathcal{C}_{\rho}$ be the topological closure of $\rho^{\mathsf{Aut}(\mathbb{Q},<)}$ and 
\begin{quote} 
$\mathcal{P}_{\rho} = \{ p \in \mathcal{P} : p$ extends to an automorphism from $\mathcal{C}_{\rho} \}$. 
\end{quote}

\subsection{The general case} 

Given  $\gamma\in \mathsf{Aut}(\mathbb{Q},<)$ the ordered set of orbitals $({\cal O}_{\gamma}, \prec_{\gamma} )$ is $\{ +,-,0 \}$-colored by the parity function $\wp_{\gamma}$.
Assuming that this ordering is not of type $\infty_{+,-,0}$ one can apply Theorem  \ref{decomposition}. 
This finds a canonical $\chi$-colored ordering $(\mathcal{L}_{\gamma}\prec_{\gamma}, \hat{\wp}_{\gamma})$. 

Let $p$ be a partial automorphism of $(\mathbb{Q},<)$ and $\sim_p$ be the partial orbital equivalence relation. 
If $A\subseteq \mathbb{Q}$ and $p = \gamma \upharpoonright A$,  the parity function 
$\wp_{\gamma}:\mathbb{Q}\to \{+, -,0\}$ defines a $\{ +,-,0\}$-colored ordered set $(A/\sim_{p}, \prec_p, \wp_p )$.

\begin{theorem} \label{equiva}
Let $\rho ,\gamma \in \mathsf{Aut}(\mathbb{Q}, \le )$. 
The following conditions are equivalent. 
\begin{enumerate}
\item $\gamma \in \mathcal{C}_{\rho}$; 
\item for any finite $A\subseteq \mathbb{Q}$ there is a $\wp$-homomorphism   $\phi : (A/\sim_{p}, \prec_p ,\wp_{p}) \rightarrow (\mathcal{O}_{\rho}, \prec_{\rho}, \wp_{\rho})$ where $p = \gamma \upharpoonright A$; 
\item $\mathcal{O}_{\rho}$ is of type $\infty_{+,-,0}$ or both $\mathcal{O}_{\gamma}$ and $\mathcal{O}_{\rho}$ are not of type $\infty_{+,-,0}$ and there is a $\chi$-homomorphism from the canonical decomposition $(\mathcal{L}_{\gamma}, \prec_{\gamma}, \hat{\wp}_{\gamma})$ corresponding to $(\mathcal{O}_{\gamma}, \prec_{\gamma}, \wp_{\gamma} )$ to the canonical decomposition $(\mathcal{L}_{\rho}, \prec_{\rho}, \hat{\wp}_{\rho})$ corresponding to $(\mathcal{O}_{\rho}, \prec_{\rho}, \wp_{\rho} )$. 
\end{enumerate}   
\end{theorem}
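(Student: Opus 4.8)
The plan is to establish the two equivalences $(1)\Leftrightarrow(2)$ and $(2)\Leftrightarrow(3)$ separately. The first is essentially a translation of Lemma \ref{cl} into the language of orbitals, whereas the second is where the colored-ordering machinery of Section 2 carries the argument.

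For $(1)\Leftrightarrow(2)$, recall that by Lemma \ref{cl} one has $\gamma\in\mathcal{C}_\rho$ iff every finite restriction $p=\gamma\upharpoonright A$ embeds as a structure into $(\mathbb{Q},\mathsf{Graph}(\rho),\mathsf{Graph}(\rho^{-1}))$; so it suffices to show that such an embedding exists iff there is a $\wp$-homomorphism $\phi:(A/\sim_p,\prec_p,\wp_p)\to(\mathcal{O}_\rho,\prec_\rho,\wp_\rho)$. For the forward direction I would take an order-embedding $\iota$ conjugating $p$ into $\rho$ and let $\phi$ send each $p$-orbital to the $\rho$-orbital containing its $\iota$-image; then $\phi$ is well defined because $\iota$ carries $p$-relatedness into $\rho$-relatedness, it preserves colors since conjugation preserves parities, it is weakly order-preserving, and it is injective on $0$-colored classes since distinct fixed points land in distinct singleton orbitals. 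For the converse I would realize $\phi$ by an explicit placement: each $0$-class goes to its designated fixed point, and each $+$- or $-$-class, being a finite shift-like chunk, embeds into the matching infinite orbital $\phi(O)$ (conjugate to a translation), with several classes sent to the same orbital stacked in the induced order by density of $\mathbb{Q}$. The only delicate point is global order-consistency when $\phi$ collapses classes, which is guaranteed by weak monotonicity of $\phi$.

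For $(2)\Leftrightarrow(3)$ I would first dispose of the extreme cases. If $\mathcal{O}_\rho$ is of type $\infty_{+,-,0}$ then it realizes arbitrarily long $+,-,0$-patterns, so every finite colored ordering $\wp$-embeds into it and (2) holds, while (3) holds by its first disjunct. If $\mathcal{O}_\rho$ is not of this type but $\mathcal{O}_\gamma$ is, then $\gamma$ has finite restrictions whose partial-orbital structure is an arbitrarily long $+,-,0$ sequence, which by Proposition \ref{p-epsilons} cannot $\wp$-embed into the finitely many $\le 2$-colored intervals of $\mathcal{O}_\rho$; thus both (2) and (3) fail. This leaves the case where neither is of type $\infty_{+,-,0}$, so Theorem \ref{decomposition} supplies canonical decompositions $\mathcal{L}_\gamma$ and $\mathcal{L}_\rho$. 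For $(3)\Rightarrow(2)$, given a $\chi$-homomorphism $\psi:\mathcal{L}_\gamma\to\mathcal{L}_\rho$ and a finite $A$, I send each $p$-orbital lying in an interval $I$ into $\psi(I)$; since $\hat{\wp}_\gamma(I)\subseteq\hat{\wp}_\rho(\psi(I))$ and $\psi(I)$ is of $\infty$-type whenever $I$ is, there is room to place the finitely many classes from $A\cap I$ correctly, and by Remark \ref{phi-inj} any collapse made by $\psi$ merges only monochromatic intervals into a richer one, which is harmless. For $(2)\Rightarrow(3)$ I feed (2) a single sufficiently rich $A$, taking from each $\infty_{\varepsilon_1\varepsilon_2}$-interval of $\mathcal{L}_\gamma$ a sequence of at least $n^n$ pairs of colors $\varepsilon_1,\varepsilon_2$ and one point from each singleton interval, exactly as in Definition \ref{rch}; running the pigeonhole argument from the Claim in the proof of Theorem \ref{rch-amal} forces each such interval into a single interval of $\mathcal{L}_\rho$ of color $\supseteq$ its own, and reading off these assignments produces the required $\chi$-homomorphism.

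The step I expect to be the main obstacle is this last extraction: condition (2) supplies, for each finite $A$, only a possibly non-uniform $\wp$-homomorphism, whereas (3) demands one coherent $\chi$-homomorphism of the finite decompositions. What overcomes it is the richness threshold $n^n$ of Definition \ref{rch}: once a single finite piece is this rich, canonicity of the decomposition (Theorem \ref{decomposition}) together with the separation of distinct $\infty$-intervals by off-color points forces the interval-to-interval assignment to be well defined, so one large $A$ suffices and no diagonalization over all finite subsets is needed.
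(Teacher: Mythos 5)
Your overall strategy coincides with the paper's: the equivalence of (1) and (2) is proved by the same orbital-by-orbital embedding construction (the paper phrases it with conjugating automorphisms rather than through Lemma \ref{cl}, but the content is identical), the degenerate cases involving type $\infty_{+,-,0}$ are disposed of the same way, and the passage between (2) and (3) rests on the canonical decompositions of Theorem \ref{decomposition} plus a pigeonhole argument applied to a single sufficiently rich finite set $A$. Proving $(2)\Rightarrow(3)$ directly instead of the paper's $(1)\wedge(2)\Rightarrow(3)$ is a harmless variation, since a $\wp$-homomorphism supplies exactly the parity-preserving, weakly order-preserving data that the paper extracts from the conjugating automorphism $\sigma$.

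However, there is a genuine quantitative flaw at precisely the step you single out as the crux. You fix the richness threshold ``exactly as in Definition \ref{rch}'', i.e. $n^n$ pairs where $n=|\mathcal{L}_\gamma|$, and then invoke the pigeonhole from the Claim in the proof of Theorem \ref{rch-amal}. But that Claim concerns two structures that are rich with respect to the \emph{same} canonical decomposition, whereas here the target is $\mathcal{O}_\rho$ with its own decomposition $\mathcal{L}_\rho$, whose size is unrelated to $|\mathcal{L}_\gamma|$. For the pigeonhole to force an interval-to-interval assignment, the number of pairs taken from each $\infty_{\varepsilon_1\varepsilon_2}$-interval of $\mathcal{L}_\gamma$ must exceed $|\mathcal{L}_\rho|$, so that at least one full pair lands inside a single interval of $\mathcal{L}_\rho$ and forces that interval to have type $\infty_{\varepsilon_1\varepsilon_2}$. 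If $|\mathcal{L}_\rho|$ is large compared with $|\mathcal{L}_\gamma|^{|\mathcal{L}_\gamma|}$, your $A$ is too small: for instance, when $\mathcal{L}_\gamma$ is a single interval of type $\infty_{+-}$ you take only one pair, and condition (2) may hand you a $\wp$-homomorphism sending the $+$-point into a $\{+\}$-interval and the $-$-point into a disjoint $\{-\}$- or $\infty_{+-}$-interval of $\mathcal{L}_\rho$, from which no coherent $\chi$-homomorphism can be read off. The paper avoids exactly this by setting $k=\mathsf{max}(|\mathcal{L}_\gamma|,|\mathcal{L}_\rho|)$ and demanding increasing sequences of $k^k$ pairs; with that correction (any threshold exceeding $|\mathcal{L}_\rho|$ suffices) your extraction step, and hence the whole argument, goes through as in the paper.
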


\begin{proof} 
The implication $1 \rightarrow 2$ is easy. 
Indeed, for any finite $A\subset \mathbb{Q}$ there is an automorphism $\sigma$ such that $\sigma^{-1} \rho \sigma$ coincides with $\gamma$ on $A$. 
Let $p = \gamma \upharpoonright A$ and 
$(A\cup p(A))/\sim_p = \{ C_1 ,\ldots ,  C_k \}$ where $C_1 \prec \ldots \prec C_k$.  
Then the automorphism $\sigma$ maps each $C_i$ into some orbital of $\mathcal{O}_{\rho}$ so that the parity is preserved. 

To see $2 \rightarrow 1$ we take an increasing sequence of finite sets $A_i$ such that $\mathbb{Q} = \bigcup A_i$.  
Let $p_i = \gamma \upharpoonright A_i$. 
It suffices to show that for every $i$ there is an automorphism $\sigma_i$ such that $p_i = \sigma^{-1}_i \rho \sigma_i \upharpoonright A_i$. 

Let $(A_i \cup p_i (A_i ))/\sim_{p_i} = \{ C_1 ,\ldots , C_k ,\ldots ,C_s \}$ where $C_1 \prec \ldots \prec C_s$ is the corresponding set of $p_i$-orbitals. 
Applying the second condition of the theorem we find a $\wp$-homomorphism $\phi$ from $(A_i \cup p_i (A_i ))/\sim_{p_i})$ to $\mathcal{O}_{\rho}$. 
In order to construct $\sigma_i$ we define for every $k\le s$  an equivariant embedding of $(C_k , <, p_i \upharpoonright C_k )$ into the $\rho$-orbital $(\phi (C_k ),<,\rho \upharpoonright \phi (C_k ))$.   
It will be organized in such a way that all these embeddings together form a finite partial isomorphism mapping $(A_i \cup p_i (A_i ), < , p_i )$ to $(\mathbb{Q}, < , \rho )$. 
By ultrahomogenity of $(\mathbb{Q},<)$ it extends to an automorphism. 
The latter one is our $\sigma_i$.  

If the parity of $C_k$ is $0$ then for some $q\in \mathbb{Q}$, $C_k = \{ q \}$ and $p_i (q) = q$. 
Thus the orbital $\phi (C_k )$ consists of one element and 
$\phi$ defines the required embedding. 
It is worth noting here that according to Definition \ref{kappa-hom} for another $C_{\ell}$ of parity $0$ the orbitals $\phi (C_k )$ and $\phi (C_{\ell} )$ are distinct. 
Thus the corresponding embeddings define a partial equivariant isomorphism on $p_i$-orbitals of parity $0$. 

Let us consider the case when  the parity of $C_k$ is $+$. 
In this case $C_k$ has a natural decomposition into partial $p_i$-orbits. 
We may choose their representatives, say $a_1 < a_2 < \ldots < a_t$. 
Furthermore, we assume that those $a_j$ are minimal representatives of these orbits (say of sizes $l_j$). 
Using induction on $j \le t$ one can construct an isomorphic embedding of 
\[ 
\{ a_1 , p_i (a_1 ), \ldots p^{l_1}_i (a_1 ),a_2 , p_i (a_2 ), \ldots ,  a_j , p_i (a_j ), \ldots ,p^{l_j}_i (a_j )\} 
\] 
into the orbital $\phi (C_k )$ (which is also of parity $+$, i.e. is infinite) so that the action of $p_i$ agrees with the action of $\rho$. 
Finaly after $t$-th step we get an isomorphic embedding of $(C_k , < , p_i \upharpoonright C_k )$ into $(\phi (C_k ), <, \rho \upharpoonright \phi (C_k ))$. 

It can happen that $\phi (C_k ) = \phi (C_{k+1})$. 
Thus having constructed an embedding of $(C_k , < , p_i \upharpoonright C_k )$ into $(\phi (C_k ), <, \rho \upharpoonright \phi (C_k ))$ we repeat the procedure for 
$(C_{k+1} , < , p_i \upharpoonright C_{k+1} )$ starting with the image for the least element of $C_{k+1}$ higher than 
the elements corresponding to $C_k$. 
The case when the parity of $C_k$ is $-$, is similar. 

Let us prove $3 \rightarrow 2$. 
Assume that both $\gamma$ and $\rho$ are not of type $\infty_{+,-,0}$ and there is a $\chi$-homomorphism, say $\hat{\psi}$, from the canonical decomposition $(\mathcal{L}_{\gamma}, \prec, \hat{\wp}_{\gamma})$  
to the canonical decomposition $(\mathcal{L}_{\rho}, \prec, \hat{\wp}_{\rho})$.  
Let $A$ be a finite subset of $\mathbb{Q}$ and let $L$ be an element of $\mathcal{L}_{\gamma}$. 
Remember that $L$ is a set of orbitals of $\gamma$. 
Define $A_L = A \cap (\bigcup \{ O \, | \, O \in L\})$. 
In order to define a $\wp$-homomorphism from 
$(A_L /\sim_{\gamma})$ to the $\prec$-ordering of $\hat{\psi}(L)$ we consider several cases depending on the $\chi$-color of $L$. 

If $\hat{\wp}_{\gamma} (L) \in \{ \{+\}, \{ - \}\}$ then according to Definition \ref{chi-hom} the $\chi$-homomorphism 
$\hat{\psi}$ maps $L$ to an interval of orbitals containing orbitals of parity $\hat{\wp}_{\gamma}(L)$.  
Thus the elements of $(A_L /\sim_{\gamma})$ can be mapped 
to the infinite orbital of $\rho$ of the corresponding parity. 

If $\hat{\wp}_{\gamma} (L) \in \{ \infty_{\varepsilon_1 \varepsilon_2} \, | \, \varepsilon_1 , \varepsilon_2 \in \{ +,-,0 \}\}$ then according to Definition \ref{chi-hom} the $\chi$-homomorphism $\hat{\psi}$ maps $L$ to an interval of orbitals of the same type. 
Thus to any $C\in (A_L /\sim_{\gamma})$ one can assign an orbital of $\rho$ of the same parity.  
Since  $\hat{\psi}(L)$ consists of infinitely many orbitals 
this can be arranged by an injective $\prec$-preserving map. 

If $\hat{\wp}_{\gamma} (L) \in \{ \{0\}, \infty_{0}\}$ then 
the elements of $(A_L /\sim_{\gamma})$ can be mapped 
injectively to the set of orbitals of $\rho$ of parity $0$ which belong to $\hat{\psi} (L)$.  

We apply the arguments above to any $L \in \mathcal{L}_{\gamma}$. 
As a result we get a required $\wp$-homomorphism. 

The case when $\rho$ is of type $\infty_{+,-,0}$ is even easier.  
Let $A$ be a finite subset of $\mathbb{Q}$.  
Then the elements of $(A /\sim_{\gamma})$ can be mapped 
injectively to the set of orbitals of $\rho$ so that the parity is preserved.  
This follows from the fact that $\rho$ has infinitely many triples of orbitals of the type $+,-,0$. 

We now prove $1 \wedge 2 \rightarrow 3$. 
Assume that $\rho$ is not of type $\infty_{+,-,0}$. 
We now show that the assumption that $\gamma$ is of type $\infty_{+,-,0}$ leads to a contradiction. 
Indeed, let $k$ be the size of $\mathcal{L}_{\rho}$, i.e. the number of intervals in the canonical decomposition of 
$\mathcal{O}_{\rho}$. 
Choose a finite $A \subset \mathbb{Q}$ such that 
$(A\cup \gamma (A))/\sim_{\gamma}$ has an increasing sequence of size $k^k$ of triples of colored orbitals of parity $+,-,0$.  
Since $\gamma \in \mathcal{C}_{\rho}$ there is an automorphism $\sigma$ such that 
$\gamma \upharpoonright A = \sigma^{-1}\rho \sigma \upharpoonright A$.  
This means that $(\sigma (A)\cup \rho \sigma (A))/\sim_{\rho}$ has an increasing sequence of size $k^k$ of triples of colored orbitals of $\rho$ of parity $+,-,0$. 
We now see that there is an interval of the canonical decomposition of $(\mathcal{O}_{\rho}, \prec, \wp_{\rho} )$ 
which contains $k$ triples of orbitals of parity $+,-,0$.
Since $\chi$ does not have colors realizing this situation we  have a contradiction.  

Now assuming that $\gamma$ is not of type $\infty_{+,-,0}$ let $k = \mathsf{max} (|\mathcal{L}_{\gamma}|, |\mathcal{L}_{\rho}|)$. 
Find a finite $A \subset \mathbb{Q}$ such that for every $L\in \mathcal{L}_{\gamma}$ 
\begin{itemize} 
\item  the set $A\cap (\bigcup \{ O \, | \, O\in L\})$ is not empty,
\item if $L$ is of type $\infty_{\varepsilon_1 \varepsilon_2}$ or $\infty_{0}$ then $(A\cup \gamma (A))/\sim_{\gamma}$ has an increasing sequence of size $k^k$ of pairs of colored orbitals of parity $\varepsilon_1 , \varepsilon_2$ (resp. sequence of size $k^k$ of orbitals of parity $0$).  
\end{itemize}
Find an automorphism $\sigma$ such that 
$\gamma \upharpoonright A = \sigma^{-1}\rho \sigma \upharpoonright A$.  
Thus $\sigma$ maps $(A\cup \gamma (A))/\sim_{\gamma}$ to 
$(\sigma (A)\cup \rho \sigma (A))/\sim_{\rho}$. 
Furthermore, by the pigeon hole principle for every $L\in \mathcal{L}_{\gamma}$ of type  $\infty_{\varepsilon_1 \varepsilon_2}$ or $\infty_{0}$ the map $\sigma$ finds an increasing sequence of size $k$ of pairs of orbitals of parity $\varepsilon_1 , \varepsilon_2$ (resp. sequence of size $k$ of orbitals of parity $0$) in an interval of the canonical decomposition of $(\mathcal{O}_{\rho}, \prec, \wp_{\rho})$. 
It is clear that the latter one must be of type  $\infty_{\varepsilon_1 \varepsilon_2}$ (resp. $\infty_{0}$). 
If $L$ is of the type $\{ \varepsilon \}$ then $\sigma$ maps a  colored orbital of parity $\varepsilon$ to an interval of the canonical decomposition of $(\mathcal{O}_{\rho}, \prec, \wp_{\rho})$ which has a compatible type. 
Applying Definition \ref{d-epsilons} we summarize our argument as follows: when we find an interval of the canonical decomposition of $(\mathcal{O}_{\rho}, \prec, \wp_{\rho})$ which corresponds to an increasing sequence of size $k$ of pairs of colored orbitals of parity $\varepsilon_1 , \varepsilon_2$ (resp. sequence of size $k$ of orbitals of parity $0$), it can be arranged in the form of a $\chi$-homomorphism from the canonical decomposition $(\mathcal{L}_{\gamma}, \prec, \hat{\wp}_{\gamma})$ to the canonical decomposition $(\mathcal{L}_{\rho}, \prec, \hat{\wp}_{\rho})$.   
\end{proof}

\subsection{The case when $\rho$ is generic} 

When $\rho$ is generic in $\mathcal{C}_{\rho}$ condition 2 of Theorem \ref{equiva} can be formulated easier. 
In order to do this we need to know some basic properties of  such automorphisms $\rho$. 
It follows from Theorem \ref{equiva} that if 
$(\mathcal{O}_{\rho}, \prec_{\rho}, \wp_{\rho})$ is of type $\infty_{+,-,0}$ then 
$\mathcal{C}_{\rho} = \mathsf{Aut}(\mathbb{Q},<)$. 
In particular $\rho$ is a generic automorphism of $\mathsf{Aut} (\mathbb{Q}, <)$. 
It is well-known that in this case 
$(\mathcal{O}_{\rho}, \prec_{\rho})$ is a dense ordering such that for each parity $\varepsilon$ orbitals of this parity are dense in it, see \cite{truss94}. 

Let us consider the remaining cases. 
Let $\rho \in \mathsf{Aut}(\mathbb{Q},<)$, but $(\mathcal{O}_{\rho}, \prec_{\rho}, \wp_{\rho})$ be not of type $\infty_{+,-,0}$. 
According to Theorem \ref{decomposition} there is a decomposition 
\[ 
\mathcal{O}_{\rho} = L_1 \, \dot{\cup} \,  L_2 \, \dot{\cup} \ldots  \dot{\cup} \, L_n 
\] 
into finitely many intervals such that the corresponding $\chi$-colored ordering 
$(\mathcal{L}_{\rho},\preceq ,\hat{\wp}_{\rho})$ is canonical. 
Let us fix this $n$. 

Let $\mathcal{K}^{rch}_{\rho}$ be the family of all $\{ +,-,0\}$-colored linear orders $(A, <, \kappa )$  such that the following properties hold. 
\begin{quote} 
There is a $\kappa$-homomorphisms,  say $\phi$, from $(A, < , \kappa )$ to  
$(\mathcal{O}_{\rho},  \prec_{\rho}  ,\wp_{\rho})$ 
such that for any $L_i \in \mathcal{L}_{\rho}$ 
\end{quote} 
\begin{itemize} 
\item  the intersection $\phi (A)\cap L_i$ is not empty; 
\item if $L_i$ is of type $\{ + \}$ or $\{ - \}$, then $\phi (A)\cap L_i$ is single;  
\item if $L_i$ is of type $\infty_{\varepsilon_1 \varepsilon_2}$ or $\infty_{0}$, then it consists of an increasing sequence of size at least $n^n$ of pairs of the color $\varepsilon_1 , \varepsilon_2$ (resp. sequence of size at least $n^n$ of elements of color $0$).  
\end{itemize} 

The following proposition clears up the situation.

\begin{proposition} \label{gen-look}
Let $\rho \in \mathsf{Aut}(\mathbb{Q},<)$, but 
$(\mathcal{O}_{\rho}, \prec_{\rho}, \wp_{\rho})$ be not of type $\infty_{+,-,0}$. 
If $\gamma$ is a generic element of $\mathcal{C}_{\rho}$, then a decomposition of 
$(\mathcal{O}_{\gamma},\prec_{\gamma},  \wp_{\gamma} )$ corresponding to the canonical $\chi$-colored ordering defines the ordering which is $\wp$-isomorphic to $(\mathcal{L}_{\rho},\prec ,\hat{\wp}_{\rho})$.  

Furthermore, if $L$ is an interval of the decomposition above for $\gamma$ and $L$ is of type $\infty_{\varepsilon_1 \varepsilon_2}$ or $\infty_0$, then $L$ is a dense orderings and for each $\varepsilon \in \{ \varepsilon_1 ,\varepsilon_2 \}$ the orbitals of color $\varepsilon$ are dense in $L$. 
\end{proposition}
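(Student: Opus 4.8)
\emph{Proof plan.} The plan is to exhibit a single comeagre, conjugacy-invariant subset $\mathcal{G}\subseteq\mathcal{C}_\rho$ all of whose members satisfy both conclusions, and then to invoke the genericity of $\gamma$: since the conjugacy class $\gamma^{\mathsf{Aut}(\mathbb{Q},<)}$ is comeagre in $\mathcal{C}_\rho$ and $\mathcal{G}$ is comeagre, the two sets meet; as $\mathcal{G}$ is invariant it then contains the whole class, hence $\gamma$. Both defining properties of $\mathcal{G}$ (that the canonical form is $\wp$-isomorphic to $(\mathcal{L}_\rho,\prec,\hat{\wp}_\rho)$ and that every interval of type $\infty_{\varepsilon_1\varepsilon_2}$ or $\infty_0$ is dense with each admissible color dense) are invariants of the parity-preserving orbital structure, so by the theorem of Schreier and Ulam \cite{holland} they are conjugacy-invariant. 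Before starting I note that $\gamma$ is not of type $\infty_{+,-,0}$: this is exactly the content of the opening argument of the implication $1\wedge 2\rightarrow 3$ in the proof of Theorem \ref{equiva}, using that $\rho$ is not of that type and that $\mathcal{L}_\rho$ is finite. Consequently $\gamma$ has a canonical decomposition and, by condition $3$ of Theorem \ref{equiva}, a $\chi$-homomorphism $\hat{\psi}:\mathcal{L}_\gamma\rightarrow\mathcal{L}_\rho$; since $\hat{\psi}$ is color-increasing, no $\delta\in\mathcal{C}_\rho$ can carry richer colors than $\rho$ does---this is the ``ceiling'' against which the forcing below will push.

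The heart of the matter is a forcing (Baire-category) lemma producing $\mathcal{G}$ as a countable intersection of dense open sets. The realizability criterion is Theorem \ref{equiva}: a finite partial isomorphism lies in $\mathcal{P}_\rho$ exactly when its colored orbital set admits a $\wp$-homomorphism into $(\mathcal{O}_\rho,\prec_\rho,\wp_\rho)$, and I recall from Definition \ref{kappa-hom} that such homomorphisms preserve the colors $+,-,0$ \emph{exactly}. For rationals $a<b$ and a parity $\varepsilon$ I consider the set of those $\delta\in\mathcal{C}_\rho$ which realize an $\varepsilon$-orbital strictly between the orbitals of $a$ and of $b$ whenever this is consistent with membership in $\mathcal{C}_\rho$. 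Openness is witnessed by finite data: a parity change or an intervening fixed point already certifies that two points lie in distinct orbitals and that the intervening orbital has the prescribed parity. For density I enlarge a given finite approximation $p\subseteq\delta$ by inserting between $a$ and $b$ a new orbital of the required color, separated from the orbitals of $a$ and $b$ by flanking parity changes or fixed points; the enlargement stays in $\mathcal{P}_\rho$ because the region in question maps, color-for-color, into an interval of $\rho$ of infinite type, which has room to receive one more orbital of the required color between the relevant images. Intersecting these sets over all rationals and parities forces, for the generic member, that within every interval of type $\infty_{\varepsilon_1\varepsilon_2}$ or $\infty_0$ the orbital order is dense and both admissible colors are dense; this is the ``Furthermore'' assertion, and it simultaneously drives each color up to the ceiling.

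It remains to upgrade this fullness to the isomorphism $\mathcal{L}_\gamma\cong\mathcal{L}_\rho$. The density just obtained shows that wherever $\hat{\psi}$ sends an interval of $\gamma$ into a strictly richer interval of $\rho$---say $\hat{\wp}_\gamma(x)\subsetneq\hat{\wp}_\rho(\hat{\psi}(x))$---the forcing has already created, inside $x$, orbitals of the missing parity, contradicting the color of $x$; hence $\hat{\wp}_\gamma(x)=\hat{\wp}_\rho(\hat{\psi}(x))$ for every $x$, so $\hat{\psi}$ preserves colors exactly. By Remark \ref{phi-inj} a color-preserving $\chi$-homomorphism out of a canonical ordering is injective. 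Surjectivity follows from one further application of the forcing principle: as $\mathcal{L}_\rho$ is finite and $\rho\in\mathcal{C}_\rho$ displays each of its intervals together with the incompatible neighbours that isolate it, the set of $\delta$ whose decomposition contains a faithful copy of any prescribed interval of $\mathcal{L}_\rho$ in its proper place is comeagre; adding these finitely many sets to the intersection forces the image of $\hat{\psi}$ to be all of $\mathcal{L}_\rho$. An order- and color-preserving bijection between $\chi$-colored orderings is the $\wp$-isomorphism required, which completes the description of $\mathcal{G}$.

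The step I expect to be the main obstacle is the density lemma, and inside it the verification that the enlarged finite configuration remains in $\mathcal{P}_\rho$. This depends on re-embedding the finite orbital-pattern of the affected region into the \emph{infinite} interval of $\rho$ of type $\infty_{\varepsilon_1\varepsilon_2}$ (or $\infty_0$) with room left to insert the new orbital, and on the careful bookkeeping---through flanking parity changes or fixed points---needed to certify with finite data that the inserted orbital is genuinely distinct from the orbitals of $a$ and $b$. It is precisely here that the hypothesis on the ambient type of $\rho$, that it is of infinite type rather than a lone $\varepsilon$-orbital, is indispensable, and here too that the conditional phrasing of the dense open sets (depending on which interval $a$ and $b$ fall into) must be made rigorous.
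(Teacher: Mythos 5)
Your overall architecture is exactly the paper's: its proof also defines a conjugacy-invariant set $\mathcal{D}\subseteq\mathcal{C}_{\rho}$ consisting precisely of the automorphisms satisfying both conclusions, shows $\mathcal{D}$ is comeagre by exhibiting countably many dense open sets $\mathcal{D}_{A}$ (indexed by finite $A\subset\mathbb{Q}$) whose intersection lies in $\mathcal{D}$, and then lets genericity of $\gamma$ finish the argument (a comeagre class must meet a comeagre invariant set). So the strategy, the invariance-via-Schreier--Ulam observation, and the preliminary remark that $\gamma$ is not of type $\infty_{+,-,0}$ are all fine and match the paper.

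The genuine gap is the step you yourself flag as ``the main obstacle'': your dense open sets are not well defined, and as phrased neither their openness nor their density can be established. The clause ``realize an $\varepsilon$-orbital strictly between the orbitals of $a$ and $b$ \emph{whenever this is consistent with membership in} $\mathcal{C}_{\rho}$'' conditions on information that is not finitary: whether insertion is consistent depends on which interval of the canonical decomposition the $\delta$-orbitals of $a$ and $b$ are sent to by a $\chi$-homomorphism $\mathcal{L}_{\delta}\rightarrow\mathcal{L}_{\rho}$, and this can even differ from gap to gap between the two orbitals (e.g.\ with $\mathcal{L}_{\rho}=(\{+\},\{-\},\infty_{+0})$ and $\delta$ having orbitals colored $+,-,+,0,+$, inserting a $0$-orbital is inconsistent in the first gap but consistent in the later ones). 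Your openness argument addresses only the positive branch (a finitely certified interpolating orbital); the vacuous branch $\{\delta:\text{insertion inconsistent}\}$ is not witnessed by finite data, so the union need not be open. The missing idea --- and the actual content of the paper's proof --- is a finitary replacement for ``consistent'': in $\mathcal{D}_{A}$, interpolation between $a_1<a_2$ is demanded only when $a_1,a_2$ lie inside an alternating sequence of length at least $n^n$ (pairs of colors $\varepsilon_1\varepsilon_2$, resp.\ elements of color $0$) inside some finite restriction $p'=\alpha\upharpoonright A'\in\mathcal{P}_{\rho}$, where $n=|\mathcal{L}_{\rho}|$. By the pigeonhole-and-canonicity argument from the $1\wedge 2\rightarrow 3$ step of Theorem \ref{equiva}, such a long sequence forces the orbitals of $a_1,a_2$ into an interval of type $\infty_{\varepsilon_1\varepsilon_2}$ (resp.\ $\infty_0$) of the canonical decomposition of \emph{every} conjugate of $\rho$ compatible with $p'$; this makes $\mathcal{D}_{A}$ an existential condition on finite data (hence open), and makes density provable by embedding a given $h\in\mathcal{P}_{\rho}$ into a conjugate $\rho'$ of $\rho$ and inserting the new orbitals inside the corresponding infinite-type interval of $\mathcal{O}_{\rho'}$. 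The same finitization takes care of your separate ``surjectivity'' forcing: the paper folds it into $\mathcal{D}_{A}$ by requiring the restriction to lie in $\mathcal{K}^{rch}_{\rho}$ (Definition \ref{rch}), so richness yields $\mathcal{L}_{\gamma}\cong\mathcal{L}_{\rho}$ without a separate comeagreness claim. Without this quantitative trigger your forcing does not go through as written.
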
 

\begin{proof} 
In the proof we use notation given before the formulation of the proposition (for example, $n$ and $\mathcal{K}^{rch}_{\rho}$). 
Let $\mathcal{D} \subseteq \mathcal{C}_{\rho}$ be the set of all $\alpha \in \mathcal{C}_{\rho}$ such that 
\begin{itemize} 
\item a decomposition of 
$(\mathcal{O}_{\alpha},\prec_{\alpha},  \wp_{\alpha} )$ corresponding the canonical $\chi$-colored ordering defines the ordering which is $\wp$-isomorphic to $(\mathcal{L}_{\rho},\prec ,\hat{\wp}_{\rho})$;   
\item if $L$ is an interval of the decomposition of the previous item and $L$ is of type $\infty_{\varepsilon_1 \varepsilon_2}$ or $\infty_0$, then $L$ is a dense orderings and for each $\varepsilon \in \{ \varepsilon_1 ,\varepsilon_2 \}$ the orbitals of color $\varepsilon$ are dense in $L$. 
\end{itemize} 
To prove the proposition we only need to show that the subset $\mathcal{D}$ is comeagre in $\mathcal{C}_{\rho}$.
To see the latter it suffices to note that for any finite $A\subseteq \mathbb{Q}$ the following set, say $\mathcal{D}_{A}$, is dense open in $\mathcal{C}_{\rho}$. 
\begin{quote} 
Let $\mathcal{D}_{A}$ consist of all $\alpha \in \mathcal{C}_{\rho}$ such that there is a finite $A' \subset \mathbb{Q}$ and $p'\in \mathcal{P}_{\rho}$ such that $A\subset A'$, 
$p'=\alpha \upharpoonright A'$, and
\end{quote}
\begin{itemize}
\item  associating to each $a'\in A'$ the color of the partial $p'$-orbital containing $a'$, the $\wp_{p'}$-colored ordering $A'$ becomes an element of $\mathcal{K}^{rch}_{\rho}$. 
\item whenever elements $a_1 <a_2$ from $A$ belong to a sequence from $A'$ of size at least $n^n$ of pairs of $\wp_{p'}$-colors $\varepsilon_1$ and $\varepsilon_2$ (resp. at least $n^n$ elements of color $0$) then there is a pair $c_1 < c_2$ from $A'$ of $\wp_{p'}$-colors $\varepsilon_1$ and $\varepsilon_2$ (resp.  $c$ of color $0$) between $a_1$ and $a_2$. 
\end{itemize} 
It is easy to see that $\mathcal{D}_{A}$ is open. 
To see that $\mathcal{D}_{A}$ is dense in $\mathcal{C}_{\rho}$ note that any $h\in \mathcal{P}_{\rho}$ embeds into a conjugate of $\rho$. 
In particular, having $h$ with $A \subseteq \mathsf{Dom} (h)$ 
we find an appropriate $A'$ such that $h$ extends to $\rho'\upharpoonright A'$ where $\rho'$ is a conjugate of $\rho$. 
On the other hand repeating the argument of the final part of Theorem \ref{equiva} it is easy to see that having $a_1 <a_2$ as in the second case of the definition of $\mathcal{D}_{A}$ the $\rho'$-orbitals of $a_1$ and $a_2$ must belong to an interval of type $\infty_{\varepsilon_1 \varepsilon_2}$ 
(resp. $\infty_0$) from a canonical decomposition of $(\mathcal{O}_{\rho'},\prec_{\rho'} )$.  
Thus $\rho'$ can be replaced by another conjugate (if necessary) where new orbitals between orbitals of $a_1$ and $a_2$ appear.  
\end{proof}

We now give a version of Theorem \ref{equiva} for $\mathcal{C}_{\rho}$ with $\rho$ generic in it.

\begin{proposition} \label{equival} 
Let $\rho \in \mathsf{Aut}(\mathbb{Q},<)$ and $\rho$ be generic in $\mathcal{C}_{\rho}$. 
Then for any $\alpha \in \mathsf{Aut}(\mathbb{Q},<)$ we have $\alpha \in \mathcal{C}_{\rho}$ if and only if there is a $\wp$-homomorphism from $(\mathcal{O}_{\alpha},\prec_{\alpha} ,\wp_{\alpha})$ to $(\mathcal{O}_{\rho},\prec_{\rho} ,\wp_{\rho})$. 
\end{proposition}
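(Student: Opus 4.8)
The plan is to deduce the Proposition from Theorem \ref{equiva} together with the density information about generic $\rho$ supplied by Proposition \ref{gen-look}. The two implications have a very different flavour: the implication ``a $\wp$-homomorphism exists $\Rightarrow \alpha \in \mathcal{C}_{\rho}$'' will hold for an arbitrary $\rho$, whereas the converse is exactly where genericity of $\rho$ (hence density of its orbital structure) is indispensable. I would treat them separately.

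For the easy direction, suppose $\phi : (\mathcal{O}_{\alpha}, \prec_{\alpha}, \wp_{\alpha}) \to (\mathcal{O}_{\rho}, \prec_{\rho}, \wp_{\rho})$ is a $\wp$-homomorphism, and I verify condition $2$ of Theorem \ref{equiva}. Fix a finite $A \subseteq \mathbb{Q}$ and put $p = \alpha \upharpoonright A$. Each colored $p$-orbital is contained in a unique $\alpha$-orbital of the same parity, and this containment map $\iota : (A/\sim_{p}, \prec_{p}, \wp_{p}) \to (\mathcal{O}_{\alpha}, \prec_{\alpha}, \wp_{\alpha})$ is order- and parity-preserving; it is injective on orbitals of color $0$, since a fixed point of $p$ is a fixed point of $\alpha$, so distinct color-$0$ classes land in distinct singleton $\alpha$-orbitals. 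Hence $\iota$ is a $\wp$-homomorphism, and so is the composition $\phi \circ \iota$ (color and order compose, and injectivity on color $0$ is preserved because $\iota$ maps color $0$ into color $0$). As $A$ was arbitrary, condition $2$ of Theorem \ref{equiva} holds and $\alpha \in \mathcal{C}_{\rho}$.

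For the converse, assume $\alpha \in \mathcal{C}_{\rho}$. If $\mathcal{O}_{\rho}$ is of type $\infty_{+,-,0}$ then $\mathcal{C}_{\rho} = \mathsf{Aut}(\mathbb{Q},<)$ and, since $\rho$ is generic, $\mathcal{O}_{\rho}$ is a dense ordering in which orbitals of every parity are dense; a single back-and-forth enumeration of $\mathcal{O}_{\alpha}$ then embeds it into $\mathcal{O}_{\rho}$ preserving order and parity and keeping color-$0$ orbitals distinct (there being densely many color-$0$ orbitals available), yielding the required $\wp$-homomorphism. If $\mathcal{O}_{\rho}$ is not of type $\infty_{+,-,0}$, then by the equivalence $1 \Leftrightarrow 3$ of Theorem \ref{equiva} the ordering $\mathcal{O}_{\alpha}$ is not of that type either and there is a $\chi$-homomorphism $\hat{\psi} : (\mathcal{L}_{\alpha}, \prec_{\alpha}, \hat{\wp}_{\alpha}) \to (\mathcal{L}_{\rho}, \prec_{\rho}, \hat{\wp}_{\rho})$ between the canonical decompositions. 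I would use $\hat{\psi}$ to route the construction interval by interval. For a target interval $L_j$ of type $\{+\}$, $\{-\}$ or $\{0\}$, the compatibility $\hat{\wp}_{\alpha}(K) \subseteq \hat{\wp}_{\rho}(L_j)$ of Definition \ref{chi-hom} forces any source interval $K$ with $\hat{\psi}(K) = L_j$ to be monochromatic of the same color, and its orbitals map into $L_j$ with no injectivity constraint (the order may be collapsed for parities $+,-$; a single color-$0$ class maps to the color-$0$ orbital). For a target interval $L_j$ of type $\infty_{\varepsilon_1 \varepsilon_2}$ or $\infty_0$, Proposition \ref{gen-look} applied to the generic $\rho$ guarantees that $L_j$ is dense and that the orbitals of each color $\varepsilon \in \{\varepsilon_1, \varepsilon_2\}$ (and of color $0$ when relevant) are dense in it; the consecutive block of source intervals routed to $L_j$ concatenates into a countable colored order all of whose colors are $\subseteq$-below $\hat{\wp}_{\rho}(L_j)$, and density lets me embed this block into $L_j$ by back-and-forth, order- and parity-preservingly and injectively on color $0$. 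Assembling these interval-wise maps in the order dictated by $\hat{\psi}$ produces the desired global $\wp$-homomorphism.

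The main obstacle I expect is this converse construction: turning the finitary data (the $\chi$-homomorphism of canonical decompositions, which only records coarse interval-level information) into a single coherent map defined on all of $\mathcal{O}_{\alpha}$. The delicate points are, first, ensuring global order preservation when several source intervals are routed by $\hat{\psi}$ into one target interval, so that they are placed in disjoint and correctly ordered sub-parts, and second, maintaining color-$0$ injectivity across the whole map simultaneously rather than block by block. Both are handled by the density clauses of Proposition \ref{gen-look}, but the bookkeeping of the back-and-forth---checking at each step that an orbital of the prescribed parity is available in the right position and, for color $0$, that a fresh one can always be chosen---is the technical heart of the argument.
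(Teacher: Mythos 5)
Your proposal is correct and takes essentially the same approach as the paper: the easy direction by composing with the containment map and invoking condition 2 of Theorem \ref{equiva}, and the converse by the same case split, using the $\chi$-homomorphism from condition 3 of Theorem \ref{equiva} together with the density clauses of Proposition \ref{gen-look} (applied to $\rho$ itself, since $\rho$ is generic in $\mathcal{C}_{\rho}$) to build the global $\wp$-homomorphism by density-driven insertion. The only difference is bookkeeping: the paper constructs the map as an increasing union of partial $\wp$-homomorphisms along a finite exhaustion of $\mathcal{O}_{\alpha}$, handling all intervals of $\mathcal{L}_{\alpha}$ at each stage, whereas you embed each consecutive block of source intervals into its target interval by back-and-forth and then assemble; these are the same construction organized differently.
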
 

\begin{proof} Sufficiency follows from  Theorem \ref{equiva}. 
Indeed, condition 2 of that theorem follows from existence of a $\wp$-homomorphism as in the formulation of the proposition.  

In order to prove necessity take $\alpha \in \mathcal{C}_{\rho}$ and present $\mathcal{O}_{\alpha}$ as a union of an increasing countable family of finite sets $\hat{\mathcal{O}}_n \subseteq \mathcal{O}_{\alpha}$, $n \in \omega$,  such that $\mathcal{O}_{\alpha}= \bigcup \{ \hat{\mathcal{O}}_n \, |\, n\in \omega \}$. 
If $\mathcal{O}_{\alpha}$ is finite, then we consider the family consisting of $\hat{\mathcal{O}}_1 = \mathcal{O}_{\alpha}$. 

Assume that $(\mathcal{O}_{\rho},\prec_{\rho} ,\wp_{\rho})$ is of type $\infty_{+,-,0}$. 
Then $\rho$ is a generic automorphism of $\mathsf{Aut} (\mathbb{Q}, <)$,  i.e.   
$(\mathcal{O}_{\rho}, \prec_{\rho})$ is a dense ordering such that for each parity $\varepsilon$ orbitals of this parity are dense in it, see \cite{truss94}. 
In this case an easy inductive argument gives a family of $\wp$-homomorphisms 
$\phi_i :  \hat{\mathcal{O}}_i \rightarrow \mathcal{O}_{\rho}$, $i\in \omega$, where each $\phi_{i+1}$ extends $\phi_i$.  
Then $\bigcup \phi_i$ gives the result. 

Let us consider the case when $(\mathcal{O}_{\rho},\prec_{\rho} ,\wp_{\rho})$ is not of type $\infty_{=,-,0}$. 
According to Theorem \ref{equiva} there is a $\chi$-homomorphism 
\[
\hat{\psi}: 
(\mathcal{L}_{\alpha},\prec ,\hat{\wp}_{\alpha})\rightarrow 
(\mathcal{L}_{\rho}, \prec, \hat{\wp}_{\rho}).  
\] 
In order to define a $\wp$-homomorphism 
$\phi_{i+1} :  \hat{\mathcal{O}}_{i+1} \rightarrow \mathcal{O}_{\rho}$, 
for every $L\in \mathcal{L}_{\alpha}$ let us define $\phi_{i+1}$ on  $L\cap \hat{\mathcal{O}}_{i+1}$.  
We assume that $\phi_i$ has been already defined. 

If $L$ is of type $\{ +  \} $, $\{ -  \}$ or $\{ 0 \}$ we define all $\phi_i$ by mapping each $L\cap \hat{\mathcal{O}}_{i}$ to the same element of $\hat{\psi}(L)$. 

If $L$ is of type $\infty_{\varepsilon_1 \varepsilon_2}$ or 
$\infty_0$ then step  $i+1$ looks as follows. 
Let $L\cap \hat{\mathcal{O}}_{i+1}= \{ C_1,\ldots ,C_k \}$ and let $L\cap \hat{\mathcal{O}}_{i}= \{ C_{i_1},\ldots ,C_{i_l} \}$. 
According to Proposition \ref{gen-look} for any $s\le l$ the interval $(\phi_i (C_{i_s}), \phi_i (C_{i_{s+1}}))$ is also of type $\infty_{\varepsilon_1 \varepsilon_2}$ (resp. $\infty_{0}$) in $\mathcal{O}_{\rho}$. 
Thus an extension of $\phi_{i}$ can be easily defined as a partial injective map on the interval $(C_{i_s}, C_{i_{s+1}})$. 
Using this method we build $\psi_{i+1}$ on $L\cap \hat{\mathcal{O}}_{i+1}$. 
Now take $\bigcup \phi_i$ as a required $\wp$-homomorphism $\phi$. 
\end{proof} 

\section{CAP for $\mathcal{P}_{\rho}$}

In this section we preserve the notation of Section 3. 
We remind the reader that $\mathcal{P}$ denotes the set of all finite partial isomorphisms of $(\mathbb{Q},<)$. 
When $\rho \in \mathsf{Aut}(\mathbb{Q},<)$ we denote by  
$\mathcal{C}_{\rho}$ the topological closure of $\rho^{\mathsf{Aut}(\mathbb{Q},<)}$ and 
\begin{quote} 
$\mathcal{P}_{\rho} = \{ p \in \mathcal{P} : p$ extends to an automorphism from $\mathcal{C}_{\rho} \}$. 
\end{quote} 
The main result of this section is Theorem \ref{gd+rch-cap} in Section 4.3. 
It states that the subspace $\mathcal{C}_{\rho}$ has a generic automorphism. 
The proof is based on some preparatory results concerning the following families. 
Let $\mathcal{P}^{+}$ be the family of all $p\in \mathcal{P}$ 
such that $(\mathsf{Dom} (p) \cup \mathsf{Rng} (p))/\sim_p$ 
consists of colored orbitals of parity $+$.  
The family $\mathcal{P}^{-}$ is defined in a similar way. 

\subsection{CAP for $\mathcal{P}^+$}

Let $p\in \mathcal{P}$ and 
$a,a' \in \mathsf{Dom} (p) \cup \mathsf{Rng} (p)$. 
We say that $(a,a')$ is a {\em bad pair of} $p$ 
(resp. in $\mathsf{Dom} (p) \cup \mathsf{Rng} (p)$) if one of the following properties holds: 
\begin{itemize}
\item $a<a'$ and $a,a'$ represent colored $p$-orbitals (which can coincide) such that they and all colored $p$-orbitals between them have parity $+$ and the values $p(a)$ and $p^{-1}(a')$ are not defined; 
\item $a<a'$ and $a,a'$ represent colored $p$-orbitals (which can coincide) such that they and all colored $p$-orbitals between them have parity $-$ and the values $p(a')$ and $p^{-1}(a)$ are not defined. 
\end{itemize} 
Let $\mathcal{P}^+_{gd}$ (resp. $\mathcal{P}^-_{gd}$) be the subset of $\mathcal{P}^+$ (resp. $\mathcal{P}^-$) 
consisting of all $p$ with a single colored $p$-orbital and without bad pairs. 
The following proposition is the main result of this subsection. 

\begin{proposition} \label{plus} 
The family $\mathcal{P}^+_{gd}$ (resp. $\mathcal{P}^-_{gd}$) is a  cofinal subfamily of $\mathcal{P}^+$ (resp. $\mathcal{P}^-$) and has the amalgamation property. 
\end{proposition}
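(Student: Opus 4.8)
By the order-reversing symmetry $x\mapsto -x$, which interchanges the roles of $+$ and $-$, it suffices to treat $\mathcal{P}^+_{gd}$. The plan is to begin with a structural reformulation of membership. For $p\in\mathcal{P}^+$ with a single colored orbital, list the orbit chains $a,p(a),p^2(a),\ldots$ of $p$, calling the $<$-least element of a chain its \emph{bottom} and the $<$-greatest its \emph{top}. Then I would show that $p$ has no bad pair if and only if no top lies strictly below a bottom, i.e. $\max(\text{bottoms})\le\min(\text{tops})$, equivalently all chains share a common point; a short argument (with $\ge 2$ chains these two extrema cannot coincide) produces a rational $c\notin\mathsf{Dom}(p)\cup\mathsf{Rng}(p)$ that every chain crosses. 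I would also note that once all chains cross a common $c$ the single-orbital hypothesis is automatic: if $u<v$ are crossing points with $u<v<c<p(u)$ then $u\le v\le p(u)$, so $u$ and $v$ are $p$-related. The technical core is then a \emph{leveled-order lemma}: assigning to each point its crossing level relative to $c$, every level-$k$ point lies below every level-$(k+1)$ point, and inside a level the order is by chain; consequently the order type of the whole configuration, including any prolongation of the chains upward at tops or downward at bottoms, is completely determined by the crossing data.

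For cofinality, given $p\in\mathcal{P}^+$ I would first extend $p$ to an automorphism $\gamma$ with a single $+$-orbital (a conjugate of $x\mapsto x+1$), then fix $c\in\mathbb{Q}$ and let $S$ contain the $\gamma$-crossing point of each relevant orbit together with crossing points of enough intermediate $\gamma$-orbits that consecutive ones $u<u'$ satisfy $u'<\gamma(u)$. The restriction $p'=\gamma\upharpoonright S$ then has all chains crossing $c$, so $p'\in\mathcal{P}^+_{gd}$ and $p\subseteq p'$.

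For amalgamation let $p_0\subseteq p_1$, $p_0\subseteq p_2$ in $\mathcal{P}^+_{gd}$ and write $X_0=\mathsf{Dom}(p_0)\cup\mathsf{Rng}(p_0)$. Choosing a common point $c_0$ for $p_0$ fixes the chains of $p_0$ and their crossing levels once and for all; $p_1$ and $p_2$ each prolong these chains and may add new ones. The automorphism $\alpha$ (required to fix $X_0$ pointwise) is then forced on the overlaps: at each $p_0$-chain prolonged by both sides, since $p_3\supseteq p_1$ and $p_3\supseteq\alpha(p_2)$ leave $p_3$ no choice on the shared points, $\alpha$ must carry the level-$k$ point of $p_2$'s prolongation to the level-$k$ point of $p_1$'s prolongation for every level up to the smaller prolongation length. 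By the leveled-order lemma both families occupy the same canonical positions, so this partial map, together with the identity on $X_0$, is $<$-monotone and extends to some $\alpha\in\mathsf{Aut}(\mathbb{Q},<)$; the remaining points of $p_2$ (overflow of the longer prolongations and new chains) are sent by $\alpha$ to fresh positions inserted into the same leveled order. Then $p_3^{0}=p_1\cup\alpha(p_2)$ is a well-defined order-preserving injection all of whose colored orbitals have parity $+$, with $p_1\subseteq p_3^{0}$ and $\alpha(p_2)\subseteq p_3^{0}$, and a final application of the cofinality step extends $p_3^{0}$ to the desired amalgam $p_3\in\mathcal{P}^+_{gd}$.

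I expect the main obstacle to be exactly the leveled-order lemma and its use. Because $\alpha$ must fix $p_0$ pointwise, the prolongations of a shared chain in $p_1$ and in $p_2$ are forced to be identified level by level, and a priori two such prolongations could interleave incompatibly with those of a neighbouring chain, leaving no monotone $\alpha$. What rescues the construction is that the no-bad-pairs condition forbids precisely the configurations (one chain lying entirely above another) that would create order-freedom, so the common point imposes a rigid leveled order and the forced identifications are automatically consistent. Establishing this rigidity, and checking that the free points can always be inserted without destroying monotonicity or the parity-$+$ condition, is where the real work lies; the passage from $p_3^{0}$ to $p_3$ is then a routine appeal to cofinality.
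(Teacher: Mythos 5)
Your structural reformulation is correct and is a genuinely nice way to see the rigidity: for $p\in\mathcal{P}^+$, absence of bad pairs is indeed equivalent to the existence of a rational $c\notin\mathsf{Dom}(p)\cup\mathsf{Rng}(p)$ crossed by every orbit chain (and this already forces a single colored orbital), and your leveled-order lemma is true, since order-preservation of $p$ makes the whole configuration lexicographically ordered by (level relative to $c$, chain). Consequently the level-by-level identification of the two prolongations of a shared $p_0$-chain, which is indeed forced on any amalgam, is automatically monotone; this is a legitimate substitute for the interval lemma ($c_1\le p_1(a)\le c_2\Leftrightarrow c_1\le p_2(a)\le c_2$) on which the paper's inductive amalgamation rests. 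However, your cofinality argument is circular. Its opening step --- extend $p\in\mathcal{P}^+$ to an automorphism with a single positive orbital --- is \emph{equivalent} to the statement being proved: $p$ extends to a conjugate of $x\mapsto x+1$ if and only if it extends to a member of $\mathcal{P}^+_{gd}$ (one direction is your own restriction argument; the other sends the level-$k$ point of the $i$-th of $N$ chains to $k+i/(N+1)$ and invokes ultrahomogeneity). Nothing available before Proposition \ref{plus} supplies this: Theorem \ref{equiva} identifies $\mathcal{C}_{x\mapsto x+1}$ with the automorphisms all of whose orbitals are positive, but such extensions may have many orbitals. The paper has to work exactly here, joining the colored orbitals and then killing bad pairs one at a time by prolonging chains downward through a carefully chosen sequence $b_k<\dots<b_1$; your proposal assumes that crux in one phrase.

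The second gap is in the amalgamation, and you name it yourself without closing it. The leveled order relative to $c_0$ governs only points lying on (prolongations of) $p_0$-chains; an entirely new chain of $p_2$ --- say $\{6\mapsto 16\}$ over $p_0=\{0\mapsto 10\}$ with $c_0=5$ --- need not cross $c_0$ at all, so it has no canonical position ``in the same leveled order''. Its placement is genuinely constrained: where $\alpha$ puts such a chain inside one gap of $\mathsf{Dom}(p_0)\cup\mathsf{Rng}(p_0)$ determines, through order-preservation of $p_1\cup\alpha(p_2)$, where its entire forward orbit must sit relative to the points of $p_1$ in the image gaps, and these requirements cascade and interact with the overflow of the longer prolongations. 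This insertion problem is precisely the place where amalgamation in $\mathcal{P}^+$ breaks down over a non-good base (e.g.\ $p_0=\{0\mapsto 1, 2\mapsto 3\}$, $p_1=p_0\cup\{1\mapsto 2\}$, $p_2=p_0\cup\{1\mapsto 1.5,\,1.5\mapsto 2\}$ has no amalgam), and handling it is what the paper's induction on $|A_1\setminus A_0|+|A_2\setminus A_0|$ --- with its explicit choice of a fresh $q$ and its re-placement of the $p_2$-orbits meeting $(a,p_2(a))$ --- actually accomplishes. Declaring this ``where the real work lies'' identifies the work but does not do it; so the proposal establishes the consistency of the forced half of the amalgamation and leaves both the free half and cofinality unproved.
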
   

\begin{proof} 
We consider only the case of $\mathcal{P}^+$. 
In order to reach cofinality let $p \in \mathcal{P}^+$ and 
$\mathsf{Dom} (p) \cup \mathsf{Rng} (p) = A_1 \dot{\cup} \ldots \dot{\cup} A_{\ell}$ be a decomposition into colored orbitals (of parity $+$). 
Let $a'_i = \mathsf{min} A_i$ and $a''_i = \mathsf{max} A_i$, $i\le \ell$. 
Find $a_1, \ldots , a_{\ell}$ such that $a''_i < a_i < a'_{i+1}$, $i< \ell$, and extend $p$ to a map which takes each $a''_i$ to $a_i$ and each $a_i$ to $a'_{i+1}$, $i<\ell$.  
The extension which we get belongs to $\mathcal{P}^+$ and consists of a single orbital. 
Thus from now on we may assume that $p$ consists of a single colored orbital. 

It remains to eliminate bad pairs. 
Assume that $a<a'$ is a bad pair and $c_1 < c_2 < \ldots <c_k$ 
is a sequence from $\mathsf{Dom} (p) \cup \mathsf{Rng} (p)$ such that all $p^{-1}(c_i)$ belong to 
$\mathsf{Dom} (p) \cup \mathsf{Rng} (p)$ and 
\[ 
p^{-1} (c_1 )< a \le c_1 \, , \, p^{-1} (c_2 )\le c_1 < c_2 \, \ldots \, , \, p^{-1} (c_{k} )\le c_{k-1} < c_{k} \, , \, c_{k-1}\le a' \le c_k . 
\]
Since $p^{-1}(a')$ is not defined, $a'\not= c_k$. 
Choose a sequence of new elements $b_k < b_{k-1} < \ldots <b_1$ such that $p$ extends to a partial isomorphism with 
$p (b_1) = a'$, $b_k < a$ and $b_i = p^{-1}(b_{i-1} )$.  
This can be done by induction. 
For example we define $b_1$ as an element of $\mathbb{Q}$ such that for any $c \in \mathsf{Dom} (p) \cup \mathsf{Rng} (p)$ 
the inequality $b_1 <c$ holds only when there exists $c'\in \mathsf{Dom} (p) \cup \mathsf{Rng} (p)$ with 
$c'\le c$ and $a' < p(c')$. 
In particular $b_1 <c_{k-1}$. 
At the next step of induction the chosen $b_1$ will play the role of $a'$, etc. 
After such an extension of $p$ the number of bad paairs is reduced. 
It is now clear that $p$ can be extended to a finite partial  
isomorphism of $(\mathbb{Q}, <)$ which belongs to $\mathcal{P}^+_{gd}$.   

Let us prove AP for $\mathcal{P}^+_{gd}$. 
We will use some recipe from \cite{kuske-truss} and enrich it by a new trick. 
Assume that we have proper extensions $p_0 \subset p_1$ and $p_0 \subset p_2$ where $p_0, p_1 , p_2 \in \mathcal{P}^+_{gd}$. 
Let $A_i = \mathsf{Dom} (p_i) \cup \mathsf{Rng} (p_i)$, $i\le 2$. 
We may assume that $A_0 = A_1 \cap A_2$. 

Let us start with the (simplest) case when for each $i\in \{ 1,2\}$ each partial $p_i$-orbit in $A_i$ is contained in $A_0$ or has empty intersection with $A_0$.  
In this case $p_1 \cup p_2$ is a partial isomorphism on $A_1 \cup A_2$. 
In order to make it to belong to $\mathcal{P}^+_{gd}$ we apply the procedure which was used above for cofinality.   

Let us consider the case when there are partial $p_0$-orbits in $A_0$ which have proper extensions to $p_1$-orbits in $A_1$ (resp. $p_2$-orbits in $A_2$).  
Apply induction by $|A_1 \setminus A_0 |+|A_2 \setminus A_0|$.  
It is clear that: 
\begin{itemize} 
\item there is $a\in A_0$ such that $p_0 (a)$ is not defined but one of $p_1 (a)$ or  $p_2 (a)$ is defined {\bf or}  
\item 
there is $a\in A_0$ such that $p^{-1}_0 (a)$ is not defined but one of $p^{-1}_1 (a)$ or $p^{-1}_2 (a)$ 
is defined. 
\end{itemize}  
We consider the first case, the second one is similar. 
Then neither $p_1 (a)$ nor $p_2 (a)$ belongs to $A_0$ since otherwise they form bad pairs with $a$ in $A_0$. 

Assume that both $p_1 (a)$ and  $p_2 (a)$ are defined and $p_1 (a)\le p_2 (a)$ (the case $p_1 (a)\ge p_2 (a)$ is symmetric). 
Then for any interval $(c_1 ,c_2 )$ with $c_1, c_2 \in A_0$, 
\[ 
c_1 \le p_1 (a) \le c_2 \Leftrightarrow c_1 \le p_2 (a) \le c_2.  
\]
Indeed, if for example $p_1 (a) \le c_2 < p_2 (a)$ then there is $c'\in A_0$ such that $p(c') = c_2$ (the pair $a<c_2$ is not bad in $A_0$). 
Since  $p_1 (a) \le c_2$ we have $a \le c'$ and $p_2 (a) \le p_2 (c') = c_2$, a contradiction.  

We now make some partial amalgamation as follows. 
Let $p'_0$ be the extension of $p_0$ by the assignment $a \rightarrow p_1 (a)$. 
Note that $p'_0$ does not have bad pairs. 
Let $p'_2$ be obtained from $p_2$ by replacing the pair (of its graph) $a \rightarrow p_2 (a)$ by $a\rightarrow p_1 (a)$ and if $p_2 (a)$ is mapped by $p_2$ to some $d\in A_2$ we replace $p_2 (a) \rightarrow d$ by $p_1 (a)\rightarrow d$. 
Doing so we should similarly correct other $p_2$-orbits in $A_2$ which intersect $(a,p_2(a))$: when $c \in (a,p_2(a)) \cap A_2$ represents such an orbit we just replace it in this orbit by some $c' \in (a,p_1(a))$.    
As a result $p'_2$ is isomorphic with $p_2$ over $p_0$. 
If $A'_i = \mathsf{Dom} (p'_i) \cup \mathsf{Rng} (p'_i)$, $i \in \{ 0,2\}$, then 
\[
|A_1 \setminus A'_0 |+|A'_2 \setminus A'_0| < |A_1 \setminus A_0 |+|A_2 \setminus A_0|,  
\] 
i.e. we have made a step to full amalgamation. 

The case when $p_1 (a)$ is defined but $p_2 (a)$ is not, can be reduced to the previous one. 
Let $q\in \mathbb{Q}\setminus A_2$ be chosen as a realization (for $z$) of  
\[ 
\forall x\in A_2 (x< a \le p_2 (x) \rightarrow p_2 (x) < z )  
\]
which is less than all realizations of this formula from $A_2$.  
We extend $p_2$ by the assignment $a \rightarrow q$. 
It is clear that the obtained $p'_2$ does not have bad pairs, so the argument of the previous case can be applied.   
\end{proof}

According to Theorem \ref{equiva} for any $\rho$ having only positive orbitals the class $\mathcal{C}_{\rho}$ consists of all automorphisms $\alpha$ which have only positive orbitals too. 
Let us call it $\mathcal{C}^{+}$. 
According to Proposition \ref{plus} (above) and  Theorem 1.2 of \cite{iva99} this class has a generic automorphism. 
It is curios that up to conjugacy this is the standard shift: $st (x) = x+1$. 
Indeed, if $\gamma \in \mathcal{C}^+$ and has at least two orbitals then fix two representatives of them, say $a_1$ and $a_2$. 
It is an easy exercise that the set of all automorphisms from $\mathcal{C}^+$ with $a_1$ and $a_2$ in the same orbital, is dense and open in $\mathcal{C}^+$.  
Thus $\gamma$ cannot be generic in $\mathcal{C}^+$. 

\subsection{Gluing orbitals} 

Let $\mathcal{P}_{gd}$ be the set of all partial isomorphisms 
$p\in\mathcal{P}$ with the following properties: 
\begin{itemize} 
\item there are no bad pairs in colored $p$-orbitals of $(\mathsf{Dom} (p) \cup \mathsf{Rng} (p))/\sim_p$ of parities $\pm$;
\item if colored $p$-orbitals $O$ and $O'$ of parities $\pm$ are neighbors in the natural ordering of 
$(\mathsf{Dom} (p) \cup \mathsf{Rng} (p))/\sim_p$ then 
they have different parities.  
\end{itemize}  
We will call the members of $\mathcal{P}_{gd}$ {\em good isomorphisms}. 

\begin{remark}  \label{emb}   
Assume that $p\in \mathcal{P}_{gd}$ and 
$A = \mathsf{Dom} (p) \cup \mathsf{Rng} (p)$.    
Then any $\wp$-homomorphism from $(A/\sim_{p}, \prec_p, \wp_p )$ to any $\wp$-colored linear ordering is injective. 
{ \em Indeed, according to Definition \ref{kappa-hom} if $\phi$ is a such a homomorphism and $\phi (Q) = \phi (Q')$, then the parity of $Q$ and $Q'$ is the same and not equal to $0$.  
Such orbitals should be separated by an orbital of a different color, which is impossible. 
}
\end{remark}

\begin{lemma} \label{cof}
The family $\mathcal{P}_{gd}$ is cofinal in $\mathcal{P}$.
Furthermore, assume that $\gamma\in \mathsf{Aut}(\mathbb{Q},<)$ . 
Then the family $\mathcal{P}_{gd} \cap\mathcal{P}_{\gamma}$ is cofinal in $\mathcal{P}_{\gamma}$. 
\end{lemma}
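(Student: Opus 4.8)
The plan is to prove both assertions by one device: replace every maximal block of consecutive colored orbitals of a single sign by one good orbital of that sign, using Proposition \ref{plus} to carry out the replacement. For the first (unconstrained) assertion I would argue as follows. Given $p\in\mathcal{P}$, order its colored orbitals and group them into maximal runs of consecutive orbitals all of parity $+$ (resp. all of parity $-$). Each run is on its own a member of $\mathcal{P}^+$ (resp. $\mathcal{P}^-$), so by the cofinality half of Proposition \ref{plus} it extends, inside the interval of $\mathbb{Q}$ it occupies, to a single colored orbital of the same parity without bad pairs; doing this for every run yields $p'\supseteq p$. By construction $p'$ has no bad pairs, and it meets the neighbour condition automatically: if two merged $\pm$-orbitals of $p'$ had the same parity and were adjacent, the corresponding runs would have had nothing between them in $p$ and so would have formed a single maximal run — a contradiction, since by maximality consecutive same-parity runs are separated by an orbital of a different parity, which survives the merging. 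Hence $p'\in\mathcal{P}_{gd}$.

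For the second assertion I would first reduce to the case $p\subseteq\gamma$. Since $p\in\mathcal{P}_\gamma$ we have $p\subseteq\alpha$ for some $\alpha\in\mathcal{C}_\gamma$, and writing $\alpha=\lim_i\sigma_i\gamma\sigma_i^{-1}$, for large $i$ the conjugate $\sigma_i\gamma\sigma_i^{-1}$ agrees with $\alpha$ on the finite set $\mathsf{Dom}(p)\cup\mathsf{Rng}(p)$; replacing $\gamma$ by this conjugate (which alters neither $\mathcal{C}_\gamma$, nor $\mathcal{P}_\gamma$, nor the meaning of goodness) I may assume $p=\gamma\!\upharpoonright\!\mathsf{Dom}(p)$. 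The membership test I would rely on is the one contained in Theorem \ref{equiva}: a finite partial isomorphism $q$ lies in $\mathcal{P}_\gamma$ exactly when there is a $\wp$-homomorphism from its colored orbital ordering $(\mathsf{Dom}(q)\cup\mathsf{Rng}(q))/\!\sim_q$ into $(\mathcal{O}_\gamma,\prec_\gamma,\wp_\gamma)$, the nontrivial direction being precisely the construction of the conjugating $\sigma_i$ in the proof of $2\Rightarrow1$ there. For $p$ itself the map sending each $p$-orbital to the $\gamma$-orbital containing it is such a homomorphism $h$: it preserves order and parity, and it is injective on $0$-orbitals because distinct fixed points of $p$ are distinct fixed points of $\gamma$.

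I would then run the run-merging of the first part, but now arrange each merge so that a $\wp$-homomorphism into $\mathcal{O}_\gamma$ is preserved. Given a maximal $+$-run $[O_1]\prec\ldots\prec[O_j]$ I merge it into one $+$-orbital $[O^*]$ (adding a connecting $+$-chain and then clearing bad pairs by Proposition \ref{plus}, all new points of parity $+$) and I extend the homomorphism by sending $[O^*]$ to the single $\gamma$-orbital $h([O_1])$. This is permissible exactly because a $\wp$-homomorphism need not be injective on orbitals of nonzero colour: order-preservation towards the two sides of the run is inherited from $h$ (everything to the left maps $\preceq h([O_1])$ and everything to the right maps $\succeq h([O_j])\succeq h([O_1])$), the colour $+$ is preserved, and the untouched $0$-orbitals keep injectivity on colour $0$; note also that two distinct $+$-runs can never be forced onto the same $\gamma$-orbital, since the different-parity orbital separating them maps strictly between them. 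Doing this for all runs produces a good $p'\supseteq p$ carrying a $\wp$-homomorphism $\mathcal{O}_{p'}\to\mathcal{O}_\gamma$, so $p'\in\mathcal{P}_{gd}\cap\mathcal{P}_\gamma$.

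The step I expect to be the real obstacle is this last collapsing, because a single $+$-run of $p$ may meet several distinct $\gamma$-orbitals separated by irrational boundaries or by $\gamma$-orbitals that $p$ does not meet. In such a case $[O^*]$ cannot be realized inside $\gamma$ at all — its generating chain would have to cross a boundary that $\gamma$ fixes, and when $\gamma$ has no fixed points one cannot even pry the two $p$-orbitals apart by inserting a $0$-orbital. This is exactly why I route the argument through the homomorphism criterion of Theorem \ref{equiva} rather than through restrictions of the fixed automorphism $\gamma$: membership in $\mathcal{P}_\gamma$ only requires extendability to \emph{some} element of $\mathcal{C}_\gamma$, and collapsing the whole run onto one $\gamma$-orbital furnishes such an element while respecting $p'\supseteq p$. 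The residual bookkeeping — that the connecting and bad-pair-clearing points can be placed within each run's interval without forging spurious $\sim_{p'}$-relations with the neighbouring orbitals — is routine and is handled as in Proposition \ref{plus}.
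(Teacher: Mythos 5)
Your proposal is correct and takes essentially the same route as the paper: the first assertion is the paper's $\mathsf{Gluing}\,\mathsf{orbitals}$ procedure (merging each maximal same-parity run via Proposition \ref{plus} inside the separating interval), and your second part --- collapsing each run onto a single $\gamma$-orbital of a conjugate extending $p$ to get a $\wp$-homomorphism, then realizing it through the construction in the proof of Theorem \ref{equiva} and ultrahomogeneity --- is exactly the paper's argument that the glued isomorphism has the type of a finite restriction of a conjugate of $\gamma$. Your closing discussion of why one must collapse runs rather than extend inside $\gamma$ itself is precisely the point the paper handles implicitly by realizing "the type of $p_1$" elsewhere in $\gamma'$ and conjugating back.
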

 
We will deduce this lemma from the following procedure. 

\bigskip 
\noindent
$\mathsf{Gluing} \, \mathsf{orbitals}$: 

Let $p \in \mathcal{P}$. 
Consider $(\mathsf{Dom} (p) \cup \mathsf{Rng} (p))/\sim_p$ as a linearly ordered set induced by $(\mathbb{Q},<)$. 
Assume that $O_i, O_{i+1} ,\ldots, O_j$ is a maximal interval of the same color ($+$ or $-$).  
Their elements belong to the open interval of $\mathbb{Q}$ determined by the last element of $O_{i-1}$ and the first one of $O_{j+1}$. 
Applying Proposition \ref{plus} inside this interval we obtain an extension of $p$ where $O_i, O_{i+1} ,\ldots, O_j$ become subsets of a single orbital without bad pairs. 
Repeating this procedure for each maximal interval of the same parity we eventually obtain an element of $\mathcal{P}_{gd}$. 
   
\bigskip 
   
\noindent
{\em Proof of Lemma \ref{cof}.} 
The first sentence of the lemma follows directly from   
$\mathsf{Gluing} \, \mathsf{orbitals}$.  
If $p \in \mathcal{P}_{\gamma}$ then applying this procedure we obtain $p_1 \in \mathcal{P}_{gd}$ extending $p$.   
Note that the ordered set $((\mathsf{Dom} (p_1 ) \cup \mathsf{Rng} (p_1)) /\sim_{p_1}, \prec_{p_1}, \wp_{p_1})$ embeds into $((\mathsf{Dom} (p) \cup \mathsf{Rng} (p))/\sim_p, \prec_p , \wp_p)$ by a $\wp$-homomorphism. 
If $\gamma'$ is a conjugate of $\gamma$ which extends $p$ then there is a finite restriction of $\gamma'$ which is of the type of $p_1$. 
By ultrahomogeneity of  $(\mathbb{Q},<)$ the partial isomorphism $p_1$ can be extended to a conjugate of $\gamma$.  
$\Box$

\subsection{CAP for $\mathcal{P}_{\rho}$} 

Let $\rho \in \mathsf{Aut}(\mathbb{Q},<)$. 
Consider $(\mathcal{O}_{\rho}, \prec_{\rho} , \wp_{\rho} )$
where $\wp_{\rho}$ is the corresponding parity function. 
There are two cases. 
If $(\mathcal{O}_{\rho}, \prec_{\rho} , \wp_{\rho} )$ is of type $\infty_{+,-,0}$ then according to Theorem \ref{equiva}, 
$\mathcal{C}_{\rho} = \mathsf{Aut}(\mathbb{Q},<)$. 
In particular we have $\mathcal{P}_{\rho} = \mathcal{P}$. 
It is well-known that $\mathcal{P}$ satisfies CAP (see Theorem 1.1 in \cite{kuske-truss}), i.e. $\mathcal{C}_{\rho}$ has a generic automorphism (and its description is well-known, \cite{truss94}).  
Note that by Lemma \ref{cof} the family $\mathcal{P}_{gd}$ is cofinal in $\mathcal{P}$. 
According to the proof of Theorem 1.1 of \cite{kuske-truss} $\mathcal{P}_{gd}$ is contained in a subfamily of $\mathcal{P}$ with AP. 
Thus $\mathcal{P}_{gd}$ has AP too. 
 
Now assume that $\mathcal{O}_{\rho}$ is not of type $\infty_{ +,-,0}$. 
Applying Theorem  \ref{decomposition} we find a $\chi$-colored ordering $(\mathcal{L}_{\rho}\prec_{\rho}, \hat{\wp}_{\rho})$ corresponding to a canonical decomposition of $(\mathcal{O}_{\rho}, \prec_{\rho},\wp_{\rho})$ into finitely many intervals: 
\[ 
\mathcal{L}_{\rho} =( L_1 \, < \,  L_2 \, < \ldots  < \, L_n ) \, 
\mbox{ where } \, 
\mathcal{O}_{\rho} = L_1 \, \dot{\cup} \,  L_2 \, \dot{\cup} \ldots  \dot{\cup} \, L_n .
\] 
Let $\mathcal{K}_{\rho}$ be the age of $(\mathcal{O}_{\rho}, \prec_{\rho},\wp_{\rho})$. 
As above we consider $\mathcal{K}_{\rho}$ together with the family of all $\wp$-homomorphisms. 
Note that when $p$ is a partial automorphism from $\mathcal{P}_{\rho}$ and $\sim_p$ is the corresponding partial orbitals equivalence relation then the finite structure $((\mathsf{Dom} (p) \cup \mathsf{Rng} (p)) /\sim_{p}, \prec_{p}, \wp_{p})$
belongs to $\mathcal{K}_{\rho}$, i.e. is mapped into $(\mathcal{O}_{\rho}, \prec_{\rho},\wp_{\rho})$ by a $\wp$-homomorphism. 
Having such a homomorphism we obtain a decomposition of  $(\mathsf{Dom} (p) \cup \mathsf{Rng} (p)) /\sim_{p}$ induced by the canonical decomposition of $\mathcal{L}_{\rho}$. 

Let $\mathcal{K}^{rch}_{\rho}$ be the subfamily of $\mathcal{K}_{\rho}$ defined according to Definition \ref{rch} with respect to the decomposition above. 

\begin{definition} 
Let $p$ be a partial automorphism from $\mathcal{P}_{\rho}$ and $\sim_p$ be the corresponding partial orbitals equivalence relation. 
We say that $p$ is rich if the finite structure $((\mathsf{Dom} (p) \cup \mathsf{Rng} (p)) /\sim_{p}, \prec_{p}, \wp_{p})$
belongs to $\mathcal{K}^{rch}_{\rho}$. 

Let $\mathcal{P}^{rch}_{\rho}$ be the family of all rich isomorphisms from $\mathcal{P}_{\rho}$. 
\end{definition} 

The following theorem demonstrates that the family of all partial isomorphisms which are good and rich witnesses CAP. 

\begin{theorem} \label{gd+rch-cap}
Let $\rho \in \mathsf{Aut}(\mathbb{Q},<)$.  
Then the subspace $\mathcal{C}_{\rho}$ has a generic automorphism. 
Furthermore, if $\mathcal{O}_{\rho}$ is not of type $\infty_{ +,-,0}$ then the family $\mathcal{P}^{rch}_{\rho} \cap \mathcal{P}_{gd}$ is a cofinal subfamily of $\mathcal{P}_{\rho}$ and, moreover, it has the amalgamation property.  
\end{theorem}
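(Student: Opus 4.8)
The plan is to split according to whether $(\mathcal{O}_{\rho}, \prec_{\rho}, \wp_{\rho})$ is of type $\infty_{+,-,0}$. In that case Theorem \ref{equiva} gives $\mathcal{C}_{\rho} = \mathsf{Aut}(\mathbb{Q},<)$, so $\mathcal{P}_{\rho} = \mathcal{P}$; here the existence of a generic automorphism is classical (Theorem 1.1 of \cite{kuske-truss}, with the explicit description in \cite{truss94}) and the ``furthermore'' clause is not asserted. Thus all the work is in the case where $\mathcal{O}_{\rho}$ is not of type $\infty_{+,-,0}$, where I must show that $\mathcal{P}^{rch}_{\rho} \cap \mathcal{P}_{gd}$ is cofinal in $\mathcal{P}_{\rho}$ and has the amalgamation property. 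Once this is done, $\mathcal{P}_{\rho}$ possesses a cofinal subfamily with AP, i.e.\ it has CAP and in particular WAP; together with JEP (automatic, since any two members of $\mathcal{P}_{\rho}$ embed into $\rho$) this yields a generic automorphism of $\mathcal{C}_{\rho}$ by Theorem 1.2 of \cite{iva99}.

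For cofinality I would start from an arbitrary $p \in \mathcal{P}_{\rho}$ and first apply Lemma \ref{cof} to extend it to a good isomorphism $p_1 \in \mathcal{P}_{gd} \cap \mathcal{P}_{\rho}$. I would then enrich $p_1$ into a rich one: since $p_1$ extends to some conjugate $\rho'$ of $\rho$, whose orbital ordering carries the same canonical decomposition $\mathcal{L}_{\rho}$, for each interval $L_i$ of type $\infty_{\varepsilon_1\varepsilon_2}$ or $\infty_0$ I would adjoin finitely many further orbitals of $\rho'$ lying in $L_i$ until an increasing sequence of at least $n^n$ pairs of colors $\varepsilon_1,\varepsilon_2$ (resp.\ elements of color $0$) is present, while arranging that each single-colored interval is met in exactly one orbital. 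The resulting $p_2$ is a restriction of $\rho'$, hence lies in $\mathcal{P}_{\rho}$; it is rich by construction, and it stays good because inside an $\infty_{\varepsilon_1\varepsilon_2}$-interval the adjoined orbitals alternate in color and so create neither a same-parity neighbour nor a bad pair. Thus $p_2 \in \mathcal{P}^{rch}_{\rho} \cap \mathcal{P}_{gd}$ extends $p$.

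For the amalgamation property I would take $p_0 \subseteq p_1$, $p_0 \subseteq p_2$ in $\mathcal{P}^{rch}_{\rho} \cap \mathcal{P}_{gd}$ and amalgamate on two levels. At the \emph{orbital level}, the inclusions induce injective $\wp$-homomorphisms of the orbital orderings (injectivity by goodness, cf.\ Remark \ref{emb}), which all lie in $\mathcal{K}^{rch}_{\rho}$; by Theorem \ref{rch-amal} these amalgamate inside $\mathcal{K}^{rch}$ into a colored ordering $\mathcal{D}$ whose interleaving inside each $\infty_{\varepsilon_1\varepsilon_2}$-interval keeps colors alternating, so that $\mathcal{D}$ is again good and rich and still maps into $\mathcal{O}_{\rho}$ by a $\wp$-homomorphism, whence $\mathcal{D} \in \mathcal{K}^{rch}_{\rho}$. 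Since the orbital maps are injective, each orbital of $\mathcal{D}$ is either inherited from a single one of $p_1,p_2$ or is a common orbital coming from $p_0$; in the latter case the two extensions $O_0 \subset O_1$ and $O_0 \subset O_2$ are good single-parity orbitals, so at the \emph{element level} I would amalgamate their internal shift data by Proposition \ref{plus} (AP for $\mathcal{P}^+_{gd}$ and $\mathcal{P}^-_{gd}$) while fixing $p_0$. Realizing $\mathcal{D}$ together with all these internal amalgams as genuine points of $(\mathbb{Q},<)$ — leaving $p_1$ as it is and dropping the new elements of $p_2$ into the gaps prescribed by $\mathcal{D}$ — produces a finite partial isomorphism $p_3 \supseteq p_1$; the required automorphism $\alpha$ fixing $\mathsf{Dom}(p_0)\cup\mathsf{Rng}(p_0)$ and carrying $p_2$ into $p_3$ then exists by ultrahomogeneity of $(\mathbb{Q},<)$.

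The main obstacle I expect is coordinating these two levels while keeping $p_3$ inside $\mathcal{P}^{rch}_{\rho} \cap \mathcal{P}_{gd}$: the orbital-level amalgamation of Theorem \ref{rch-amal} fixes how the new orbitals of $p_1$ and $p_2$ interleave in the gaps between common orbitals, and I must perform the element-level amalgamations of Proposition \ref{plus} inside the common orbitals consistently with this interleaving, so that no bad pair or same-parity neighbour is created and no interval loses its required $n^n$ pairs. Verifying that the glued map $p_3$ is an honest partial isomorphism whose orbital ordering lies in $\mathcal{K}^{rch}_{\rho}$ — equivalently, that $p_3 \in \mathcal{P}_{\rho}$ by the characterization of Theorem \ref{equiva} — is the delicate bookkeeping step; it is precisely goodness of $p_0,p_1,p_2$ (forcing the orbital maps to be injective and same-parity orbitals to be separated) that decouples the two levels enough for the construction to go through.
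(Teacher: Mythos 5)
Your proposal is correct and takes essentially the same route as the paper: the same case split (citing \cite{kuske-truss} when $\mathcal{O}_{\rho}$ is of type $\infty_{+,-,0}$), cofinality obtained by combining the gluing procedure of Lemma \ref{cof} with enrichment inside a conjugate of $\rho$, and amalgamation performed on two levels --- the orbital level via Theorem \ref{rch-amal} and the element level inside common orbitals via Proposition \ref{plus} --- then realized in $(\mathbb{Q},<)$ by ultrahomogeneity and concluded by Theorem 1.2 of \cite{iva99}. The only cosmetic difference is the order of the cofinality steps: the paper first makes $p$ rich (via JEP with a rich restriction of $\rho$) and then glues, patching richness at interval boundaries, whereas you glue first and then enrich, patching goodness when new orbitals are adjoined; the two patches are of comparable difficulty.
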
 

\begin{proof} 
As we have already observed above it suffices to consider the case when $\mathcal{O}_{\rho}$ is not of type $\infty_{ +,-,0}$. 
In order to prove cofinality of $\mathcal{P}^{rch}_{\rho} \cap \mathcal{P}_{gd}$ take any $p\in \mathcal{P}_{\rho}$. 
Let $A$ be a finite subset of $\mathbb{Q}$ such that  
$(A /\sim_{p_1}, \prec_{p_1}, \wp_{\rho} \upharpoonright A )$
belongs to $\mathcal{K}^{rch}_{\rho}$, where $p_1 = \rho \upharpoonright A$.
Using JEP for $\mathcal{P}_{\rho}$ we may assume that $p$ extends $p_1$ for some $A$. 
This induces a $\wp$-homomorphism, say $\phi_1$, from 
$(A /\sim_{p_1}, \prec_{p_1}, \wp_{\rho} \upharpoonright A )$ to $( (\mathsf{Dom} (p) \cup \mathsf{Rng} (p)) /\sim_{p}, \prec_{p}, \wp_{p} )$. 
Let us apply procedure 
$\mathsf{Gluing} \, \mathsf{orbitals}$ to $p$. 
If $p_2$ is the result of this procedure, then there is an obvious $\wp$-homomorphism, say $\phi_2$, from 
$((\mathsf{Dom} (p) \cup \mathsf{Rng} (p)) /\sim_{p}, \prec_{p}, \wp_{p} )$ onto $( (\mathsf{Dom} (p_2) \cup \mathsf{Rng} (p_2)) /\sim_{p_2}, \prec_{p_2}, \wp_{p_2} )$. 
As in the proof of Lemma \ref{cof} we have $p_2 \in \mathcal{P}_{\rho}$. 
Thus $\phi_2 \circ \phi_1$ is a $\wp$-homomorphism from 
$(A /\sim_{p_1}, \prec_{p_1}, \wp_{\rho} \upharpoonright A )$ to $( (\mathsf{Dom} (p_2) \cup \mathsf{Rng} (p_2)) /\sim_{p_2}, \prec_{p_2}, \wp_{p_2} )$. 
Now verifying Definition \ref{rch} for the latter ordering one sees that the only condition which can fail is the last one in the case when $L_i$ of type $\infty_{\varepsilon_1 \varepsilon_2}$. 
It may happen that 
$((\mathsf{Dom} (p_2) \cup \mathsf{Rng} (p_2)) /\sim_{p_2} )\cap L_i$ starts with $\varepsilon_2$ or ends by $\varepsilon_1$. 
We can remedy this situation at the moment when we apply JEP for $p$ and $p_1$. 
At this stage we can slightly extend the result of that amalgamation by new partial orbitals. 

Let us prove the amalgamation property for $\mathcal{P}^{rch}_{\rho} \cap \mathcal{P}_{gd}$. 
Take a pair of extensions $p_0 \subseteq p_1$ and $p_0 \subseteq p_2$ of elements of $\mathcal{P}^{rch}_{\rho} \cap \mathcal{P}_{gd}$. 
They induce natural $\wp$-homomorphisms, say $\phi_1$ and $\phi_2$, among $( (\mathsf{Dom} (p_i) \cup \mathsf{Rng} (p_i)) /\sim_{p_i}, \prec_{p_i}, \wp_{p_i} )$, $i\le 2$. 
According to Remark \ref{emb} they are injective and according to the claim from the proof of Theorem  \ref{rch-amal} 
they preserve the intervals $L_i$ of the canonical decomposition 
$(\mathcal{L}_{\rho},\preceq ,\hat{\wp}_{\rho})$.
Applying Theorem \ref{rch-amal} we find a $\{ +,-,0\}$-colored linear ordering $(L, <, \kappa ) \in \mathcal{K}^{rch}_{\rho}$ and embeddings $\psi_i$ of $( (\mathsf{Dom} (p_i) \cup \mathsf{Rng} (p_i)) /\sim_{p_i}, \prec_{p_i}, \wp_{p_i} )$ into it ($1\le i \le 2$) which give the corresponding amalgamation. 
Note that this amalgamation is disjoint: it is explicitly stated in the proof of Theorem \ref{rch-amal}.  
Furthermore, we may assume that 
\[ 
L = \psi_1 ( (\mathsf{Dom} (p_1) \cup \mathsf{Rng} (p_1)) /\sim_{p_1}) \cup \psi_2 ((\mathsf{Dom} (p_2) \cup \mathsf{Rng} (p_2)) /\sim_{p_2}). 
\] 
Let us build a partial isomorphism $p_3 \in \mathcal{P}^{rch}_{\rho} \cap \mathcal{P}_{gd}$ such that $p_3$ is an amalgamation of $p_1$ and $p_2$ over $p_0$ and $( (\mathsf{Dom} (p_3) \cup \mathsf{Rng} (p_3)) /\sim_{p_3}, \prec_{p_3}, \wp_{p_3} ) = (L, <, \kappa )$. 

Let $L_i \in \mathcal{L}_{\rho}$. 
Define $A_i = \bigcup (((\mathsf{Dom} (p_0) \cup \mathsf{Rng} (p_0))/\sim_{p_0} )\cap L_i )$, i.e. the union of the corresponding partial orbitals. 
We fix rational numbers $q_i$ such that 
\[
A_1 < q_1 < A_2 < q_2 < A_3 < \ldots < q_{n-1} <A_n . 
\]
Using ultrahomogeneity of $(\mathbb{Q},<)$ we can squeeze the set  
\[ 
\bigcup  ((\mathsf{Dom} (p_1) \cup \mathsf{Rng} (p_1))/\sim_{p_1})\cap L_i ) \cup  ((\mathsf{Dom} (p_2) \cup \mathsf{Rng} (p_2)) /\sim_{p_2} ) \cap L_i )
\] 
into $(q_{i-1}, q_i )$ (resp. $(-\infty, q_1)$ or $(q_{n-1}, \infty)$) over $A_i$. 
If $|\mathcal{L}_{\rho}|=1$, there is no need to do this. 

Let $\iota \in L \cap L_i$. 
We start with the case when $\iota$ is represented both by $\psi_1$ and by $\psi_2$. 
Then there is $O\in (\mathsf{Dom} (p_0) \cup \mathsf{Rng} (p_0)) /\sim_{p_0}$ such that 
$\psi_1 \circ \phi_1 (O) = \iota = \psi_2 \circ \phi_2 (O )$. 
The case when $O$ is of parity $0$ is obvious: let $p_3$ fix the element of $O$. 
When $O$ is of parity $+$ or $-$ we apply Proposition \ref{plus} as follows.  
Let $O_1$ and $O_2$ be extensions of $O$ which are colored orbitals of $p_1$ and $p_2$ respectively. 
According to Proposition \ref{plus} $p_1 \upharpoonright O_1$ and $p_2 \upharpoonright O_2$ can be amalgamated over $p_0 \upharpoonright O$ into some partial isomorphism, say $p'$, defined on a single orbital, say $O_3$.   
This $p'$ is going to be  the restriction of $p_3$ to the colored orbital corresponding to $\iota$. 
We can again squeeze $O_3$ into any open interval containing $O$. 
Thus we can organize amalgamation in the parallel way for all $\iota \in L$ which are represented by both $\psi_1$ and $\psi_2$. 

Note that this procedure amalgaamates the $L_i$-parts of $p_1$ and $p_2$ over $p_0$ for all $L_i$ which are single elements (i.e. of $\chi$-colors $\{ + \}, \{ - \}, \{ 0 \}$).   
Just in case we remind the reader that these parts are not empty. 

Assume that $\iota$ is represented by $\psi_1$ but is not represented by $\psi_2$. 
Then the action of $p_3$ on the orbital $\psi^{-1}_1 (\iota )$ is defined to be the action of $p_1$ on this set.  
In the opposite case we similarly define $p_3$ on $\psi^{-1}_2 (\iota )$. 
We place these orbitals in the intervals of $(\mathbb{Q},<)$ which naturally arise after the amalgamation described in the previous paragraph, i.e. between consecutive orbitals corresponding to elements $\iota \in L$ represented by both $\psi_1$ and $\psi_2$.  
Doing so we place the new orbitals of $p_3$ which we define at this stage according to the ordering of $(L,<)$. 
Since $(L,< \kappa) \in \mathcal{K}^{rch}_{\rho}$, we easily see that  $p_3 \in \mathcal{P}^{rch}_{\rho} \cap \mathcal{P}_{gd}$.  

Now the main statement of our theorem follows from Theorem 1.2 of \cite{iva99} (see Section 1.1 above). 
\end{proof} 

\section{Partial ordering $B_n$ and the betweenness relation} 

In this section we show how Theorem \ref{gd+rch-cap} can be applied to other highly homogeneous structures and to ultrahomogeneous orderings discussed in Section 1.3. 
In Section 5.1 we consider the case of $B_n$ for finite $n$. 
It is summarized in Theorem \ref{B_n}.  
In Section 5.2 we apply this analysis to the highly homogeneous structure $(\mathbb{Q}, B)$. 
In Section 5.3 we discuss the remained cases.

\subsection{Ordering $B_n$} 
Let $n\in \omega$. 
Consider $B_n = [n]\times \mathbb{Q}$,  
the ultrahomogeneous partially ordered set with respect to the ordering 
\[ 
(a,q) < (b,q') \, \Leftrightarrow \, a = b \wedge q <q'. 
\]
Any $\rho \in \mathsf{Aut}(B_n )$ induces a permutation from $S_n$. 
This is the projection of $\rho$ on $[n]$. 
We denote it by $\sigma_{\rho}$. 
Let $\iota_{\rho}$ be the order of $\sigma_{\rho}$. 

Let $\bar{a}$ be an orbit of $\sigma_{\rho}$ on $[n]$ (with elements $a_i$) and $|\bar{a}|$ be its size. 
We assume that $\sigma_{\rho}$ takes $a_i$ to $a_{i+1}$ (and $a_{|\bar{a}|}$ to $a_1$). 
Then $\rho^{|\bar{a}|}$ is an automorphism of each copy $(a_i, \mathbb{Q})$. 
Furthermore, $\rho$ is an isomorphism from $((a_{i}, \mathbb{Q}), < )$ to $((a_{i+1},\mathbb{Q}), < )$ which preserves the graph of $\rho^{|\bar{a}|}$. 
Let $(\mathcal{O}_{\rho^{|\bar{a}|}}, \prec_{\rho^{|\bar{a}|}})$ be the ordering of orbitals of $\rho^{|\bar{a}|}$ on $((a_{1},\mathbb{Q}), < )$. 
Then $\rho^i$ defines an isomorphism from $(\mathcal{O}_{\rho^{|\bar{a}|}}, \prec_{\rho^{|\bar{a}|}}, \wp_{\rho^{|\bar{a}|}})$ to the corresponding ordering of orbitals of $((a_{i},\mathbb{Q}), < )$. 

To analyze $\mathcal{P}_{\rho}$ we start with $\mathcal{P}_{\rho^{|\bar{a}|}}$ for the structure $((a_1, \mathbb{Q}),<)$. 
The approach of Sections 3 and 4 can be applied in this case. 
In particular $\mathcal{P}_{gd} \cap \mathcal{P}_{\rho^{|\bar{a}|}}$ can be considered and in the case when $\rho^{|\bar{a}|}$ is not of type $\infty_{+,-,0}$ on $(a_1, \mathbb{Q})$ a canonical decomposition of $(\mathcal{O}_{\rho^{|\bar{a}|}}, \prec_{\rho^{|\bar{a}|}}, \wp_{\rho^{|\bar{a}|}})$ and the family $\mathcal{P}^{rch}_{\rho^{|\bar{a}|}}$ can be defined.  
We now describe how these families can be considered in $\mathcal{P}_{\rho}$. 
  
We say that $p\in \mathcal{P}_{\rho}$ is {\em symmetric} if for every orbit $\bar{a}$ of $\sigma_{\rho}$ the map $p^i$, $i\le |\bar{a}|$, defines an isomorphism from 
$((\mathsf{Dom} (p) \cup \mathsf{Rng} (p)) \cap (a_1,\mathbb{Q}),<)$ to 
$((\mathsf{Dom} (p) \cup \mathsf{Rng} (p))\cap (a_{i},\mathbb{Q}),<)$ which preserves the partial action of $p^{|\bar{a}|}$ on these ordered sets. 
Note that this condition implies that 
\[ 
\mathsf{Rng} (p) \cap (a_{1},\mathbb{Q}) \subseteq \mathsf{Dom} (p)\cap (a_{1},\mathbb{Q}) \mbox{ and }
\mathsf{Dom} (p)\cap (a_{|\bar{a}|},\mathbb{Q}) \subseteq \mathsf{Rng} (p) \cap (a_{|\bar{a}|},\mathbb{Q}). 
\] 
Furthermore, 
\[ 
\mathsf{Dom} (p)\cap (a_{i},\mathbb{Q}) = \mathsf{Rng} (p) \cap (a_{i},\mathbb{Q}) \, \mbox{, for }  \, 1 <i<|\bar{a}|. 
\] 
When $\sigma_{\rho}$ is trivial, any $p\in \mathcal{P}_{\rho}$ is symmetric. 
We denote by $\mathcal{P}^{sym}_{\rho}$ the subfamily of $\mathcal{P}_{\rho}$ of all symmetric partial isomorphisms.

\begin{lemma} \label{b_n}
Let 
$\rho \in \mathsf{Aut} (B_n)$ with non-trivial $\sigma_{\rho}$. 
Then the family $\mathcal{P}^{sym}_{\rho}$ is cofinal in $\mathcal{P}_{\rho}$. 
\end{lemma}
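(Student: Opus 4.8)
The plan is to take an arbitrary $p \in \mathcal{P}_{\rho}$, extend it to a full conjugate $\gamma$ of $\rho$, and then cut $\gamma$ down to a finite $\sigma_{\rho}$-equivariant restriction that still contains $p$. Recall that every element of $\mathcal{P}_{\rho}$ embeds into a conjugate of $\rho$, and that every automorphism in $\mathcal{C}_{\rho}$ has copy-permutation conjugate (in $S_n$) to $\sigma_{\rho}$, since the projection $\mathsf{Aut}(B_n) \to S_n$ is continuous into a discrete group. Using the $\mathsf{Aut}(B_n)$-invariance of both $\mathcal{P}_{\rho}$ and $\mathcal{P}^{sym}_{\rho}$, I would first apply an automorphism of $B_n$ so that the partial permutation of the copies $[n]$ induced by $p$ is a restriction of $\sigma_{\rho}$ \emph{itself}, and not merely of some conjugate of $\sigma_{\rho}$. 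After this normalization $p$ extends to some $\gamma \in \mathcal{C}_{\rho}$ which is a conjugate of $\rho$ with $\sigma_{\gamma} = \sigma_{\rho}$. Such a $\gamma$ is genuinely symmetric: for each orbit $\bar a = \{ a_1 , \ldots , a_{|\bar a|} \}$ of $\sigma_{\rho}$ the powers of $\gamma$ carry the copy $(a_1 , \mathbb{Q})$ isomorphically onto each $(a_i , \mathbb{Q})$ and intertwine the action of $\gamma^{|\bar a|}$.

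Next, fix an orbit $\bar a$ of size at least two and identify every copy $(a_i , \mathbb{Q})$ with $\mathbb{Q}$ through its ordering; let $\tau$ be the automorphism of $\mathbb{Q}$ induced by $\gamma^{|\bar a|}$ on the first copy. I would pull back the finitely many points of $\mathsf{Dom}(p) \cup \mathsf{Rng}(p)$ lying in the copies of this orbit to the first copy along the appropriate powers of $\gamma$, obtaining a finite set $S \subseteq \mathbb{Q}$. Define $\hat X$ by placing on each copy $a_i$ the image of $(a_1 , S)$ under the power of $\gamma$ reaching copy $a_i$ (and for singleton orbits of $\sigma_{\rho}$ simply keep the points of $p$, where the symmetry requirement is vacuous). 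The candidate extension is $p' = \gamma \upharpoonright \hat X$, that is, $\gamma$ restricted to those $x \in \hat X$ with $\gamma (x) \in \hat X$. A direct count shows that on each copy $a_i$ of the orbit the trace of $\mathsf{Dom}(p') \cup \mathsf{Rng}(p')$ equals the whole $\gamma$-translate of $S$: the points on copies $a_1 , \ldots , a_{|\bar a|-1}$ all lie in $\mathsf{Dom}(p')$ because their $\gamma$-images again land in $\hat X$, while the points on $a_{|\bar a|}$ lie in $\mathsf{Rng}(p')$ as images of the preceding copy.

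It then remains to check three things. First, $p \subseteq p'$: each $x \in \mathsf{Dom}(p)$ lies in $\hat X$ by the choice of $S$, and $p(x) = \gamma (x) \in \mathsf{Rng}(p) \subseteq \hat X$, so $x \in \mathsf{Dom}(p')$ with $p'(x) = p(x)$. Second, $p' \in \mathcal{P}_{\rho}$, since $p'$ is a restriction of the conjugate $\gamma$ of $\rho$, hence extends to $\gamma \in \mathcal{C}_{\rho}$. Third, $p'$ is symmetric: by the previous paragraph the trace on copy $a_i$ is exactly the $\gamma$-translate of the trace $S$ on the first copy, and the relevant power of $\gamma$ — which agrees with the corresponding power of $p'$ on these finite sets — is the required order isomorphism between the traces; because this power conjugates $\gamma^{|\bar a|}$, it also preserves the partial action of $(p')^{|\bar a|}$, which on the first copy is $\tau$ restricted to $\{ q \in S : \tau (q) \in S \}$.

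The step that needs the most care is the reduction in the first paragraph: a general $p \in \mathcal{P}_{\rho}$ only has its copy-permutation equal to a restriction of \emph{some} conjugate of $\sigma_{\rho}$, so the normalization to $\sigma_{\rho}$ itself — legitimate precisely because both families are $\mathsf{Aut}(B_n)$-invariant — is what makes a symmetric restriction meaningful at all. Once $\gamma$ with $\sigma_{\gamma} = \sigma_{\rho}$ is in hand, the remaining difficulty is only the bookkeeping around the wrap-around map from the last copy $a_{|\bar a|}$ back to the first, governed by $\tau$; this is automatically consistent on $\hat X$ because $\hat X$ is a single $\gamma$-translate of $S$ on each copy and $\gamma$ is an honest automorphism, so no separate compatibility argument is required.
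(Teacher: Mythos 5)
Your route is genuinely different from the paper's. The paper argues \emph{from below}: it never leaves the realm of finite maps, but runs an iterative completion along each $\sigma_{\rho}$-cycle, comparing the trace of $\mathsf{Dom}(p)$ on $(a_i,\mathbb{Q})$ with the trace of $\mathsf{Rng}(p)$ on $(a_{i+1},\mathbb{Q})$ and adjoining, for every point lacking an image or a preimage, a new point chosen in the interval determined by the already-present points, repeating for $i=1,\ldots,\iota_{\rho}-1$ until the traces match up equivariantly. You argue \emph{from above}: extend $p$ to a whole conjugate $\gamma$ of $\rho$, pull the points of $p$ back to the first copy of each orbit to form a finite set $S$, spread $S$ along the orbit by powers of $\gamma$, and let $p'$ be the restriction of $\gamma$ to the resulting set. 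Your bookkeeping here is correct: the trace of $\mathsf{Dom}(p')\cup\mathsf{Rng}(p')$ on copy $a_i$ is exactly the $\gamma$-translate of $S$, one gets $p\subseteq p'$, and symmetry follows from equivariance. This buys two things the paper leaves implicit: membership $p'\in\mathcal{P}_{\rho}$ is automatic (a restriction of $\gamma\in\mathcal{C}_{\rho}$), and the wrap-around consistency on the last copy requires no separate argument.

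There is, however, a gap at the step you yourself flag as crucial. You assert that after normalization $p$ extends to a conjugate $\gamma$ of $\rho$ with $\sigma_{\gamma}=\sigma_{\rho}$ \emph{exactly}. What is automatic is only that $p$ extends to a conjugate $\gamma_0$ whose copy-permutation is conjugate to $\sigma_{\rho}$ and extends the partial copy-map $\pi_p$ of $p$; when the chains of $\pi_p$ are not complete cycles, $\sigma_{\gamma_0}$ may place them in cycles of the wrong length or at the wrong relative offsets, and this cannot be repaired by a further conjugation fixing $p$. Concretely, with $\sigma_{\rho}=(123456)$ and $\pi_p$ given by $1\mapsto 2$, $3\mapsto 4$, a conjugate extending $p$ may have copy-permutation $(125346)$, and no permutation of $[6]$ fixing $1,2,3,4$ conjugates $(125346)$ to $(123456)$. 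So $\sigma_{\gamma}=\sigma_{\rho}$ must be achieved by \emph{rebuilding} $\gamma$: along each cycle of $\sigma_{\rho}$ choose link isomorphisms between consecutive copies extending the pairs of $p$, and use a link unconstrained by $p$ (or absorb into the last link) to force the return map on the first copy into the conjugacy class prescribed by $\rho$; open chains of $\pi_p$ impose no constraint on return maps, while closed chains inherit the correct constraint from $p\in\mathcal{P}_{\rho}$. Relatedly, your appeal to $\mathsf{Aut}(B_n)$-invariance of $\mathcal{P}^{sym}_{\rho}$ is not literally available: as defined, symmetry refers to the orbits of $\sigma_{\rho}$ itself, and conjugating a symmetric map by $\alpha$ produces a map symmetric with respect to $\sigma_{\alpha}\sigma_{\rho}\sigma_{\alpha}^{-1}$. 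The normalization is legitimate only under the reading -- which the paper's standing invariance convention of Section 1.1, and its own proof, silently adopt -- that symmetry is taken relative to the conjugate of $\sigma_{\rho}$ induced by $p$; but under that reading the normalization (and hence the problematic straightening step) could have been bypassed entirely.
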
 

\begin{proof} 
Let $p \in \mathcal{P}_{\rho}$. 
For simplicity we consider the case when the whole $[n]$ is the orbit of $\sigma_{\rho}$ with $\sigma_{\rho}(i) = i+1$.   
Thus we view $p$ as a collection of partial isomorphisms:  
\[ 
((\mathsf{Dom} (p) \cup \mathsf{Rng} (p)) \cap (a_{i},\mathbb{Q}) \rightarrow 
((\mathsf{Dom} (p) \cup \mathsf{Rng} (p))\cap (a_{i+1} ,\mathbb{Q}) \, , \, i<\iota_{\rho} ,  
\] 
and we view each $((\mathsf{Dom} (p) \cup \mathsf{Rng} (p)) \cap (a_{i},\mathbb{Q})$ as the support of the partial isomorphisms defined by $p^{\iota_{\rho}}$. 

The elements of $\mathsf{Dom} (p)\cap (a_{1}, \mathbb{Q})$
and $\mathsf{Rng} (p)\cap (a_{2}, \mathbb{Q})$ 
give two decompositions of $(a_{1},\mathbb{Q})$ and $(a_{2},\mathbb{Q})$, say $q_1 < q_2 < \ldots < q_s$ and 
$p (q_1 ) < p(q_2 ) < \ldots < p(q_s )$ respectively. 
If there is $q \in  \mathsf{Rng} (p) \cap (a_{1},\mathbb{Q})$ with undefined $p(q)$ find $(q_i, q_{i+1})$ (or the half-lines of $q_1$ or $q_s$) which contains this $q$. 
Then choosing some $q' \in (p(q_i ), p(q_{i+1}))$ we extend $p$ by defining $p(q) = q'$. 
If $q$ is a $p^{\iota_{\rho}}$-image of some $q_j$ then according to this definition $p(q)$ becomes the $p^{\iota_{\rho}}$-image of $p(q_j)$, i.e. the extension preserves $p^{\iota_{\rho}}$. 
This reduces the number of points of $(a_1 ,\mathbb{Q})$ for which $p$ is undefined.   
If there is $q'' \in  \mathsf{Dom} (p) \cap  (a_{2},\mathbb{Q})$ without $p$-preimages we find a preimage by the same method. 

At the second stage of the procedure we repeat this construction for the new version of $\mathsf{Dom} (p)\cap (a_{2}, \mathbb{Q})$ and for $\mathsf{Rng} (p)\cap (a_{3}, \mathbb{Q})$ (with appropriate correction of $\mathsf{Rng} (p)\cap (a_{1}, \mathbb{Q})$) and so forth till $i = \iota_{\rho}-1$. 
After the last stage we obtain an extension of $p$ which is a symmetric partial isomorphism. 

In the case when $[n]$ consists of several orbits we should apply the construction above to each orbit of $[n]$ 
\end{proof} 

Given $p \in \mathcal{P}_{\rho}^{sym}$ we associate to each $\sigma_{\rho}$-orbit $\bar{a}$ the action of $p^{|\bar{a}|}$ on 
$(\mathsf{Dom} (p) \cup \mathsf{Rng} (p)) \cap (a_{1},\mathbb{Q})$.   
This data together with isomorphisms $p^i$ from the definition, determines $p$.  
It is worth noting here that 
\[ 
\mathsf{Rng} (p) \cap (a_{1},\mathbb{Q}) =  \mathsf{Rng} (p^{|\bar{a}|}) \cap (a_{1},\mathbb{Q}). 
\] 
It is reasonable to say that such $p$ belongs to   
$\mathcal{P}_{gd} \cap \mathcal{P}_{\rho}$ (or to $\mathcal{P}^{rch}_{\rho}$) if for each $\sigma_{\rho}$-orbit $\bar{a}$ we additionally have that 
$p^{|\bar{a}|}$ considered in $(a_1, \mathbb{Q})$ belongs to $\mathcal{P}_{gd}$ (resp. $\mathcal{P}^{rch}_{\rho^{|\bar{a}|}}$). 

\begin{theorem} \label{B_n}
Let 
$\rho \in \mathsf{Aut} (B_n)$. 
Then $\mathcal{C}_{\rho}$ contains a generic automorphism. 
Furthermore, the family $\mathcal{P}^{sym}_{\rho}$ contains a cofinal subfamily of $\mathcal{P}_{\rho}$ with the amalgamation property. 
\end{theorem}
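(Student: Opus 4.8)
The plan is to reduce the entire statement to the single-copy analysis already completed in Theorem \ref{gd+rch-cap}, using the symmetric isomorphisms of Lemma \ref{b_n} as the bridge between $\mathsf{Aut}(B_n)$ and $\mathsf{Aut}(\mathbb{Q},<)$. First I would record that the projection $\gamma \mapsto \sigma_{\gamma}$ is continuous into the finite group $S_n$, whose conjugacy classes are closed; hence every $\gamma \in \mathcal{C}_{\rho}$ has $\sigma_{\gamma}$ conjugate to $\sigma_{\rho}$, and after composing with a fixed automorphism of $B_n$ one may assume $\sigma_{\gamma} = \sigma_{\rho}$. This lets me treat the permutation part $\sigma_{\rho}$ as a fixed parameter and concentrate on the $\mathbb{Q}$-data lying over each $\sigma_{\rho}$-orbit. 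I would then declare the candidate cofinal subfamily to consist of those $p \in \mathcal{P}^{sym}_{\rho}$ such that, for every $\sigma_{\rho}$-orbit $\bar{a}$, the induced map $p^{|\bar{a}|}$ considered in $(a_1,\mathbb{Q})$ lies in $\mathcal{P}^{rch}_{\rho^{|\bar{a}|}} \cap \mathcal{P}_{gd}$; by the remark preceding the theorem such a $p$ is completely determined by this orbit-wise data together with the isomorphisms $p^i$. Note that the case of trivial $\sigma_{\rho}$ (where every orbit is a singleton and $\rho^{|\bar{a}|}$ is just $\rho$ on its copy) is handled uniformly within this framework.

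Next I would prove cofinality. Starting from an arbitrary $p \in \mathcal{P}_{\rho}$, Lemma \ref{b_n} produces a symmetric extension; then for each orbit $\bar{a}$ I apply the cofinality clause of Theorem \ref{gd+rch-cap} to $p^{|\bar{a}|}$ inside the single copy $(a_1,\mathbb{Q})$, gluing and enriching it into $\mathcal{P}^{rch}_{\rho^{|\bar{a}|}} \cap \mathcal{P}_{gd}$. The point requiring care is that every modification made on $(a_1,\mathbb{Q})$ must be transported by the isomorphisms $\rho^i$ (equivalently $p^i$) to the remaining copies $(a_i,\mathbb{Q})$, so that the result is again symmetric; since these transport maps are order isomorphisms preserving the relevant action, the enriched object stays in the candidate family.

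For amalgamation I would take $p_0 \subseteq p_1$ and $p_0 \subseteq p_2$ in the candidate family and work orbit by orbit. On each copy $(a_1,\mathbb{Q})$ I amalgamate the reductions $p_1^{|\bar{a}|}$ and $p_2^{|\bar{a}|}$ over $p_0^{|\bar{a}|}$ by Theorem \ref{gd+rch-cap}, obtaining a good and rich $p_3^{|\bar{a}|}$ together with an order automorphism of $(a_1,\mathbb{Q})$ fixing the $a_1$-part of $\mathsf{Dom}(p_0)\cup\mathsf{Rng}(p_0)$. I then reconstruct the symmetric $p_3$ on all of $B_n$ from this orbit-wise data, and lift the single-copy automorphism to an automorphism $\alpha$ of $B_n$ with trivial projection, acting on each copy $(a_i,\mathbb{Q})$ as the transport of the single-copy automorphism along $\rho^i$ (and independently on distinct orbits). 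Symmetry of $p_0$ guarantees that the $a_i$-part of $\mathsf{Dom}(p_0)\cup\mathsf{Rng}(p_0)$ is the transported image of its $a_1$-part, so $\alpha$ fixes all of $\mathsf{Dom}(p_0)\cup\mathsf{Rng}(p_0)$, and $\alpha(p_2)$ together with $p_1$ embeds into $p_3$, as required. Having thus established CAP, and noting that JEP holds automatically for $\mathcal{P}_{\rho}$, the existence of a generic automorphism of $\mathcal{C}_{\rho}$ follows from Theorem 1.2 of \cite{iva99}.

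The main obstacle I anticipate is precisely the equivariance bookkeeping in the amalgamation step: ensuring that the amalgam chosen on the single copy $(a_1,\mathbb{Q})$ lifts to a genuine $B_n$-automorphism compatible with $\sigma_{\rho}$ and fixing the full set $\mathsf{Dom}(p_0)\cup\mathsf{Rng}(p_0)$, and that the reconstructed $p_3$ is genuinely symmetric and lands in $\mathcal{P}^{rch}_{\rho}\cap\mathcal{P}_{gd}$ on every copy, not merely on $(a_1,\mathbb{Q})$. A secondary point, arising when $\sigma_{\rho}$ moves several orbits of differing lengths, is that the orbit-wise amalgamations must be performed simultaneously and independently; this is routine once the single-orbit case is clear, since distinct copies of $\mathbb{Q}$ in $B_n$ carry no order relations between them.
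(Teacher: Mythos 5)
Your proposal is correct and follows essentially the same route as the paper: the paper defines $\mathcal{P}^{cap}_{\rho}$ to be exactly your candidate family (symmetric $p$ whose orbit-wise maps $p^{|\bar{a}|}$ on $(a_1,\mathbb{Q})$ lie in the CAP family supplied by Theorem \ref{gd+rch-cap}), proves its cofinality in $\mathcal{P}^{sym}_{\rho}$ by the same extend-on-one-copy-and-transport-via-$p^i$ construction, and proves AP by amalgamating orbit-wise on $(a_1,\mathbb{Q})$ and reconstructing the symmetric amalgam, concluding via Lemma \ref{b_n} and Theorem 1.2 of \cite{iva99}. Your only deviations are cosmetic: the preliminary continuity remark about $\sigma_{\gamma}$, your more explicit treatment of the lifted automorphism $\alpha$ of $B_n$, and the small caveat that when $\rho^{|\bar{a}|}$ is of type $\infty_{+,-,0}$ on its copy one should use the CAP family of Theorem \ref{gd+rch-cap} for that case (namely $\mathcal{P}_{gd}$) rather than $\mathcal{P}^{rch}_{\rho^{|\bar{a}|}}\cap\mathcal{P}_{gd}$, which the paper's abstract notation $\mathcal{P}^{cap}_{\rho^{|\bar{a}|}}$ handles automatically.
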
 

\begin{proof}
For each orbit $\bar{a}$ of $\sigma_{\rho}$ consider 
$\mathcal{P}_{\rho^{|\bar{a}|}}$ with respect to the action on $(a_{1},\mathbb{Q})$.  
By Theorem \ref{gd+rch-cap} this family has a cofinal subfamily with the amalgamation property. 
Let us denote it by $\mathcal{P}^{cap}_{\rho^{|\bar{a}|}}$. 
By $\mathcal{P}^{cap}_{\rho}$ we denote the family of all 
$p \in \mathcal{P}^{sym}_{\rho}$ such that for each $\sigma_{\rho}$-orbit $\bar{a}$ 
the action of $p^{|\bar{a}|}$ on 
$(\mathsf{Dom} (p) \cup \mathsf{Rng} (p)) \cap (a_{1},\mathbb{Q})$ belongs to $\mathcal{P}^{cap}_{\rho^{|\bar{a}|}}$.   

\noindent 
{\em Claim 1.} 
The family $\mathcal{P}^{cap}_{\rho}$ is cofinal in $\mathcal{P}^{sym}_{\rho}$. \\ 
Take any $p \in \mathcal{P}^{sym}_{\rho}$ and any $\sigma_{\rho}$-orbit $\bar{a}$. 
Extend the action of $p^{|\bar{a}|}$ on 
$(\mathsf{Dom} (p) \cup \mathsf{Rng} (p)) \cap (a_{1},\mathbb{Q})$ to some $\mathfrak{p}_{\bar{a}}\in \mathcal{P}^{cap}_{\rho^{|\bar{a}|}}$ defined on $(a_1, \mathbb{Q})$. 
Now consequently extend each $p^i$, $0<i<|\bar{a}|$, 
to an isomorphism, say $\hat{p}^i$, from $\mathsf{Dom} (\mathfrak{p}_{\bar{a}}) \cup \mathsf{Rng} (\mathfrak{p}_{\bar{a}})$ into $(a_{i},\mathbb{Q})$. 
Finally extend the the map defined by $p$ 
\[ 
p^{|\bar{a}|-1}(\mathsf{Dom} (p) \cap (a_{1},\mathbb{Q})) \rightarrow \mathsf{Rng} (p) \cap (a_{1},\mathbb{Q})   
\] 
to the map 
\[ 
\hat{p}^{|\bar{a}|-1}(\mathsf{Dom} (\mathfrak{p}_{\bar{a}})) \cap (a_{|{\bar{a}|}},\mathbb{Q}) \rightarrow \mathsf{Rng} (\mathfrak{p}_{\bar{a}})    
\]  
which takes each $\hat{p}^{|\bar{a}|-1} (v)$ with  
$v\in \mathsf{Dom} (\mathfrak{p}_{\bar{a}})$ to $\mathfrak{p}_{\bar{a}}(v)$.  
It is easy to see that $\hat{p}$ is an $\bar{a}$-restriction of some member of $\mathcal{P}^{cap}_{\rho}$. 
Applying this construction to each orbit of $\sigma_{\rho}$
we get the extension realizing the statement of the claim. 

\noindent 
{\em Claim 2.} The family $\mathcal{P}^{cap}_{\rho}$ has the amalgamation property. 

Before the proof of this claim let us consider the situation  when $p_0, p_1 \in \mathcal{P}^{cap}_{\rho}$ and $p_0 \subseteq p_1$. 
Then obviously for $\bar{a}$ as above 
\[ 
(\mathsf{Dom} (p_0) \cup \mathsf{Rng} (p_0)) \cap (a_{1},\mathbb{Q}) \subseteq (\mathsf{Dom} (p_1) \cup \mathsf{Rng} (p_1)) \cap (a_{1},\mathbb{Q}).   
\] 
Furthermore, since $p^i_0$ and $p^i_1$, $0< i < |\bar{a}|$, induce 1 to 1 maps, they agree on $(\mathsf{Dom} (p_0) \cup \mathsf{Rng} (p_0)) \cap (a_{1},\mathbb{Q})$ and in particular, 
\[ 
p^i_1((\mathsf{Dom} (p_0) \cup \mathsf{Rng} (p_0)) \cap (a_{1},\mathbb{Q})) = (\mathsf{Dom} (p_0) \cup \mathsf{Rng} (p_0)) \cap (a_{i},\mathbb{Q}).   
\] 
In order to prove Claim 2 let us consider a triple $p_1 \supseteq p_0 \subseteq p_2$. 
Then for $\bar{a}$ as above we have a triple of $p^{|\bar{a}|}_{i} \in \mathcal{P}^{cap}_{\rho^{|\bar{a}|}}$ considered on $(\mathsf{Dom} (p_i) \cup \mathsf{Rng} (p_i)) \cap (a_{1},\mathbb{Q})$, $i<3$. 
By the amalgamation property of $\mathcal{P}^{cap}_{\rho^{|\bar{a}|}}$ we have some $\mathfrak{p}_{\bar{a}}$
amalgamating these maps. 
Applying the construction of Claim 1 we obtain a partial isomorphism $\hat{p}\in \mathcal{P}^{cap}_{\rho}$ where $\mathfrak{p}_{\bar{a}}$ is the $(a_1, \mathbb{Q})$-part of $\hat{p}^{|\bar{a}|}$. 
\end{proof}

\subsection{The betweennes relation} 
Let $\rho$ be an automorphism of the betweenness relation $(\mathbb{Q}, B)$. 
If we consider $\mathbb{Q}$ together with $\pm \infty$, then there are two cases: $\rho (+\infty) = +\infty$ or 
$\rho (+\infty )  = - \infty$. 
In the first case $\rho$ just belongs to 
$\mathsf{Aut} (\mathbb{Q}, <)$ and 
$\mathcal{C}_{\rho} \subseteq \mathsf{Aut} (\mathbb{Q}, <)$.  
The second case is new. 
In order to show CAP for $\mathcal{P}_{\rho}$ in this case we need some additional analysis. 

From now on each automorphism 
$\gamma \in \mathsf{Aut} (\mathbb{Q}, <)$ is also viewed as an automorphism of the reverse ordering of $\mathbb{Q}$. 
We denote it by $\le^r$.  
In this way each partial orbital of $\gamma$ can be viewed 
as an orbital for $\le^r$. 
It is clear that it has the opposite parity then. 
It is worth noting that if $\rho$ is an automorphism of $(\mathbb{Q}, B)$ then $\rho^2$ belongs to $\mathsf{Aut} (\mathbb{Q}, <)$ and can be viewed also as an automorphism of $\le^r$. 

When $\rho \in \mathsf{Aut} (\mathbb{Q}, B)\setminus \mathsf{Aut} (\mathbb{Q}, <)$ the number of fixed points is bounded by $1$. 
If the fixed point exists we denote it by $q_{\rho}$.  
If it does not exist we define a real number as follows. 
Let 
\[ 
A^r_{\rho} = \{ q\in \mathbb{Q} \, | \, q < \rho (q) \} \,  
\mbox{ and } \, A_{\rho} = \{ q\in \mathbb{Q} \, | \, \rho (q) < q \}. 
\] 
It is easy to verify that $A^r_{\rho} < A_{\rho}$, i.e. we have a Dedekind cut. 
We also consider the pair $(A^r_{\rho},A_{\rho})$ when $q_{\rho}$ exists. 
In this case $\mathbb{Q} = A^r_{\rho} \cup \{ q_{\rho} \} \cup
A_{\rho}$. 

It is worth noting that $\rho (A^r_{\rho}) = A_{\rho}$ and $\rho (A_{\rho})= A^r_{\rho}$. 
In particular $\rho^2$ preserves both $A_{\rho}$ and $A^r_{\rho}$. 
Furthermore, $\rho$ is an isomorphism from $(A_{\rho}, < )$ to $(A^r_{\rho}, <^r )$ which preserves the graph of $\rho^2$. 
Identifying $(A_{\rho}, < )$ with $(1, \mathbb{Q})$ and $(A^r_{\rho}, <^r )$ with $(2, \mathbb{Q})$ we view $\rho$ as an automorphism of the ultrahomogeneous ordering $B_2$ with $\iota_{\rho}=2$.  
By Theorem \ref{B_n} we obtain the following statement. 

\begin{corollary} \label{B_2}
Let 
$\rho \in \mathsf{Aut} (\mathbb{Q}, B)\setminus \mathsf{Aut}(\mathbb{Q},<)$. 
Then the family $\mathcal{P}_{\rho}$ contains a cofinal subfamily with the amalgamation property. 
In particular $\mathcal{C}_{\rho}$ contains a generic automorphism. 
\end{corollary}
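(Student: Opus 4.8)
The plan is to reduce everything to Theorem \ref{B_n} with $n=2$, via the identification described just before the statement, and then to account for the one point that the identification discards: the fixed point or cut of $\rho$.

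First I would record, using Lemma \ref{cl}, the shape of $\mathcal{C}_\rho$. Since $\rho$ reverses the order, neither an order-preserving pair nor a pair of distinct fixed points embeds into $(\mathbb{Q},\rho)$; hence $T_\rho$ forces every $\gamma \in \mathcal{C}_\rho$ to be order-reversing with at most one fixed point, so that $(A^r_\gamma, A_\gamma)$ is a Dedekind cut with $\gamma(A^r_\gamma)=A_\gamma$. The presence of a fixed point is likewise governed by $T_\rho$: a single fixed point embeds into $(\mathbb{Q},\rho)$ if and only if $\rho$ has one. This yields two regimes — $\rho$ with a fixed point $q_\rho$, and $\rho$ with a genuine cut — which I would treat separately. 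Next I would set up the translation $p\mapsto\tilde p$ of finite partial isomorphisms: for $p\in\mathcal{P}_\rho$ each $x\in\mathsf{Dom}(p)$ lies in $A^r_\rho$, in $A_\rho$, or is the fixed point according to the sign of $p(x)-x$, and each $x\in\mathsf{Rng}(p)$ is classified by the sign of $x-p^{-1}(x)$. Thus the assignment of points to the lower and upper parts is determined by $p$ alone, with at most one exceptional fixed point. Reversing the order on $A^r_\rho$ and sending $A_\rho,A^r_\rho$ to the copies $(1,\mathbb{Q}),(2,\mathbb{Q})$ turns $\rho$ into an automorphism of $B_2$ with $\sigma_\rho=(1\,2)$ and $\iota_\rho=2$, and carries $p$ (with its fixed point deleted) to a partial isomorphism $\tilde p$ of $B_2$ in the corresponding family for $B_2$; this correspondence respects extension.

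In the cut regime $T_\rho$ forbids fixed points, so $p\mapsto\tilde p$ is a cofinality- and amalgamation-preserving equivalence, and Theorem \ref{B_n} immediately supplies a cofinal subfamily of $\mathcal{P}_\rho$ with AP. In the fixed-point regime I would first show that ``$p$ fixes a rational $q_0$'' is cofinal: every $p\in\mathcal{P}_\rho$ has all of $A^r_\rho\cap(\mathsf{Dom}(p)\cup\mathsf{Rng}(p))$ below all of $A_\rho\cap(\mathsf{Dom}(p)\cup\mathsf{Rng}(p))$, and fixing any rational $q_0$ in the gap gives an extension still embeddable into $\rho$. Taking as cofinal family those $p$ that fix some $q_0$ and whose $B_2$-part lies in the cofinal AP family of Theorem \ref{B_n}, amalgamation reduces to the $B_2$ case: for a triple $p_1\supseteq p_0\subseteq p_2$ uniqueness of the fixed point forces all three to share $q_0$, and the amalgam is obtained by amalgamating the $B_2$-parts on the two sides of $q_0$, the point $q_0$ serving as an inert separator between $A^r_\rho$ and $A_\rho$.

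Finally, a cofinal subfamily with AP yields CAP and hence WAP; together with the automatic JEP for $\mathcal{P}_\rho$, Theorem 1.2 of \cite{iva99} (recalled in Section 1.1) gives a generic automorphism of $\mathcal{C}_\rho$. The main obstacle I anticipate is exactly the bookkeeping of the fixed point: one must ensure the chosen cofinal family treats it uniformly, so that amalgamation never tries to merge two extensions carrying different fixed points — which a single order-reversing map cannot support — and so that the transported $B_2$-amalgamation takes place entirely on the correct sides of $q_0$.
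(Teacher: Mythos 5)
Your proposal is correct and takes essentially the same route as the paper: identify the order-reversing $\rho$ with an automorphism of $B_2$ (with $\iota_\rho=2$) via the Dedekind cut $(A^r_\rho, A_\rho)$, reversing the order on $A^r_\rho$, and then invoke Theorem \ref{B_n}. Your two-regime analysis and the bookkeeping of the fixed point $q_\rho$ are more explicit than the paper's treatment, which simply performs this identification and cites Theorem \ref{B_n}, but the underlying reduction is the same.
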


\subsection{$B_{\omega}$, $C_n$, $D$ and reducts of $(\mathbb{Q}, C)$}

We state the following theorem without proof 
(the notation is from Section 1.3). 

\begin{theorem} 
Let $M$ be a countable ultrahomogeneous structure which is isomorphic to a partially ordered set $B_{\omega}$ or some $C_n$ with $n\in \omega$. 
Then for any $\rho \in \mathsf{Aut}(M)$ the class $\mathcal{C}_{\rho}$ has a generic element.  
Furthermore, the corresponding $\mathcal{P}_{\rho}$ satisfies CAP. 
\end{theorem}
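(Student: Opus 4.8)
The plan is to treat the two families $B_\omega$ and $C_n$ separately, in each case exploiting that $\mathsf{Aut}(M)$ is a permutational wreath product of groups already analysed in the paper, and then to produce, for every $\rho\in\mathsf{Aut}(M)$, a cofinal subfamily of $\mathcal{P}_\rho$ with the amalgamation property. By Theorem 1.2 of \cite{iva99} (as recalled in Section 1.1) such a family simultaneously witnesses CAP and yields a generic automorphism, so both assertions follow at once. The building blocks are Proposition \ref{infty} (for $S_\infty$), the trivial CAP of a finite symmetric group $S_n$, and Theorem \ref{gd+rch-cap} together with Theorem \ref{B_n} (for $\mathsf{Aut}(\mathbb{Q},<)$ and for $B_n$).

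For $M=B_\omega$ the $\omega$ copies of $\mathbb{Q}$ are exactly the connected components of the comparability graph, so every $\rho$ induces a permutation $\sigma_\rho\in S_\infty$ of the index set and $\mathsf{Aut}(B_\omega)\cong \mathsf{Aut}(\mathbb{Q},<)\wr S_\infty$. First I would follow the template of Theorem \ref{B_n}: decompose $\omega$ into $\sigma_\rho$-orbits and, on each finite orbit $\bar a$, use the return map $\rho^{|\bar a|}$ on $(a_1,\mathbb{Q})$ together with Theorem \ref{gd+rch-cap} to control the fibre data, so that the notion of symmetric partial isomorphism and the family $\mathcal{P}^{cap}_\rho$ carry over for the finite-orbit part. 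The two genuinely new ingredients are (i) that the index permutation now lives in $S_\infty$, so the bookkeeping of orbits must be governed by Proposition \ref{infty} rather than by a finite permutation, and (ii) the infinite $\sigma_\rho$-orbits, on which there is no return map. For an infinite orbit $\rho$ acts as a shift on a $\mathbb{Z}$-indexed family of copies; a finite partial isomorphism only sees a window of consecutive copies, and I would argue that the composite order-isomorphism across such a window is unconstrained, so the fibre data amalgamates freely. Assembling a cofinal family then amounts to combining the $S_\infty$-richness of Proposition \ref{infty} with the colored-orbital richness of Definition \ref{rch} on each finite orbit.

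For $M=C_n$ with $n$ finite the maximal antichains are the fibres over points of the quotient linear order, so $\rho$ induces $\bar\rho\in\mathsf{Aut}(\mathbb{Q},<)$ and $\mathsf{Aut}(C_n)\cong S_n\wr\mathsf{Aut}(\mathbb{Q},<)$. Here the base is analysed by Theorem \ref{gd+rch-cap}, while the fibre group $S_n$ is finite. Since $\bar\rho$ has no nontrivial finite orbits, the only holonomy occurs at fixed points of $\bar\rho$ (the parity-$0$ orbitals, which are singletons), where $\rho$ permutes the $n$-element fibre by some $\pi_q\in S_n$; away from fixed points the fibre bijections can be straightened by conjugation. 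I would therefore enrich the colored-ordering machinery of Section 2 by attaching to each parity-$0$ orbital the conjugacy type of the corresponding $\pi_q\in S_n$, and define a rich good family that, in addition to Definition \ref{rch}, prescribes these finite $S_n$-labels. Amalgamation then reduces to Theorem \ref{gd+rch-cap} for the base together with the amalgamation of finite $S_n$-data at matched fixed points, the latter being finite bookkeeping because $S_n$ is finite; cofinality is obtained as in Lemma \ref{cof} by first enriching the $\pm$-orbitals and then realizing the maximal admissible $S_n$-types densely among the fixed points.

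I expect the main obstacle to be the infinite-cycle case of $B_\omega$, where Theorem \ref{B_n} gives no guidance because there is no finite return map, and where one must reconcile two different kinds of generic behaviour: the way $S_\infty$-cycles are created, merged or (for infinite cycles) absorbed in the closure according to Proposition \ref{infty}, and the dense realization of colored orbitals inside each copy of $\mathbb{Q}$ demanded by Proposition \ref{gen-look}. The delicate point is to design a single cofinal subfamily whose richness conditions are simultaneously compatible with both, so that an amalgamation square can be completed without destroying either the index-permutation data or the fibrewise orbital data. Once such a family is isolated, verifying AP should proceed by a coordinate-wise amalgamation that invokes Proposition \ref{infty} on the index set and Theorem \ref{gd+rch-cap} (via Theorem \ref{B_n}) on the fibres.
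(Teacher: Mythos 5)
Your proposal follows essentially the same route as the paper's own (explicitly sketch-level) proof: the paper states this theorem without detailed proof and outlines exactly your two reductions --- for $B_{\omega}$, the induced permutation $\sigma_{\rho}$ on $\omega$ treated by the cases of Section 1.2 (Proposition \ref{infty}) combined with the fibrewise method of Theorem \ref{B_n}, and for $C_n$, the $\mathbb{Q}$-projection $\gamma\in\mathsf{Aut}(\mathbb{Q},<)$ with the permutation of $[n]$ at each parity-$0$ orbital recorded as an additional color $0_f$, after which the canonical-decomposition and rich/good-family machinery of Sections 2--4 is rerun (the paper notes this requires applying Theorem \ref{decomposition} again to the $\infty_0$-parts for the refined coloring, which is the only step your sketch leaves implicit). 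Your $S_n$-labels at fixed points are precisely the paper's colors $0_f$, so the two approaches coincide.
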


The proof of this theorem is slightly technical but does not require ideas or methods beyond those ones already used in the paper. 
For example viewing $B_{\omega}$ as $\omega \times \mathbb{Q}$ for any $\rho \in \mathsf{Aut}(B_{\omega})$ one finds a permutation $\sigma_{\rho}$ on $\omega$ exactly as in Section 5.1.  
The further analysis depends on cases of Section 1.2 which correspond to $\sigma_{\rho}$. 
In each one a cofinal CAP subfamily of $\mathcal{P}_{\rho}$ can be found by the method of Theorem \ref{B_n}. 

On the other hand any automorphism $\rho \in \mathsf{Aut}(C_n)$ (with $n\in \omega$) has a natural $\mathbb{Q}$-projection, say $\gamma \in \mathsf{Aut}(\mathbb{Q},<)$.
Note that for each orbital $q\in {\cal O}_{\gamma}$ of parity $0$ the automorphism $\rho$ determines a permutation of $[n]$  on the set of incomparable points of this level.  
The corresponding cycle function, say $f$, can be viewed as a new color $0_f$. 
In order to determine $\mathcal{C}_{\rho}$ we firstly need to know if $\gamma$ is of type $\infty_{+,-,0}$.  
If this is the case it remains to recognize which finite sequences of cycle functions 
$(f_{q_1}, \, \ldots \, f_{q_k})$ can be realized for $q_i \in {\cal O}_{\gamma}$ with $\wp_{\gamma} (q_i ) = 0$.   
If $\gamma$ is not of type $\infty_{+,-,0}$ we have to apply Theorem \ref{decomposition} to 
$({\cal O}_{\gamma}, \prec_{\gamma}, \wp_{\gamma} )$ to determine the corresponding canonical decomposition. 
Applying this theorem to $\infty_0$-parts (possibly several times) we obtain canonical decompositions of them.  
Then the arguments of Sections 3 and 4 can be carried out in a natural way. 
  
The remaining cases need more detailed analysis which requires a separate paper (or papers). 
Indeed, in the case of $C_{\omega}$ we have to consider colored linear orderings with infinitely many colors. Investigation of finite partial isomorphisms of $D$ was initiated by Kuske and Truss in \cite{kuske-truss}. 
The technique used there is much deeper than in the case 
of $(\mathbb{Q},<)$. 
The main result of that paper states that $\mathcal{P}$ satisfies CAP. 
We think that this can be proved for any $\mathcal{P}_{\rho}$ with $\rho \in \mathsf{Aut} (D)$. 
We also think that the same statement holds in the cases of the circular ordering $(\mathbb{Q},C)$ and the corresponding separaion relation.  
These objects look especially interesting.  
It seems likely that the classification of homeomorphisms of the circle (see Chapter 11 in \cite{KH}) is necessary for this result.

%

\end{document}